\numberwithin{equation}{section}
\newtheorem{thm}{Theorem}
\newtheorem{cor}[thm]{Corollary}
\newtheorem{rem}[thm]{Remark}
\newtheorem{thm*}{Theorem}
\newtheorem*{thma}{Theorem A}
\newtheorem*{thmb}{Theorem B}
\newtheorem*{con}{Bogomolny-Schmit Conjecture}
\newtheorem*{con1}{Conjecture [Iwaniec-Sarnak]}
\newtheorem*{lem*}{Lemma}
\newtheorem*{claim}{Claim }
\theoremstyle{definition}
\newtheorem{thms}{Theorem}[section]  
\newtheorem{lemma}[thms]{Lemma}     
\newtheorem{proposition}[thms]{Proposition} 
\newcommand{\R}{\textrm{Re}}
\newcommand{\I}{\textrm{Im}}
\newcommand{\epy}{\frac{t_{\phi}-t_{\phi}^{1-\epsilon}}{2\pi y}}
\begin{document}
\title{$L^4$-norms and sign changes of Maass forms}

\author[Haseo Ki]{Haseo Ki}
\address{Department of Mathematics, Yonsei University, Seoul, 120-749,
Korea\\}
\email{haseo@yonsei.ac.kr}

\subjclass[2010]{Primary: 11F12, 11F30. Secondary: 35P20 }
\keywords{Maass forms, nodal domain, sign changes, the Bogomolny-Schmit Conjecture }

\begin{abstract}
Unconditionally, we prove the Iwaniec-Sarnak conjecture for $L^4$-norms of the Hecke-Maass cusp forms. From this result, we can justify that
for even Maass cusp form $\phi$ with the eigenvalue $\lambda_{\phi}=\frac{1}{4}+t_{\phi}^2$,
for $a>0$, a sufficiently large $h>0$ and for any $0<\epsilon_1<\epsilon/10^7$ ($\epsilon>0$) , for almost all $1\le k<t_{\phi}^{1-\epsilon}$, we are able to find $\beta_k=\{X_k+yi:a<y<a+h\}$ with $-\frac{1}{2}+\frac{k-1}{t_{\phi}^{1-\epsilon}}\le X_k\le-\frac{1}{2}+\frac{k}{t_{\phi}^{1-\epsilon}}$ such that the number of sign changes of $\phi$ along the segment $\beta_k$  is $\gg_{\epsilon} t_{\phi}^{1-\epsilon_1}$ as $t_{\phi}\to\infty$. Also, we obtain the similar result for horizontal lines. On the other hand, we conditionally prove that for a sufficiently large segment $\beta$ on $\R(z)=0$ and $\I(z)>0$, the number of sign changes of $\phi$ along $\beta$ is $\gg_{\epsilon} t_{\phi}^{1-\epsilon}$ and consequently, the number of inert nodal domains meeting any compact vertical segment on the imaginary axis is $\gg_{\epsilon} t_{\phi}^{1-\epsilon}$ as $t_{\phi}\to\infty$.
\end{abstract}
\maketitle


\parskip=12pt


\section{Introduction}

Laplace eigenfunctions on arithmetic surfaces sit at the intersection of analysis, geometry, and number theory. On the modular surface $\mathbb{X} = SL_2(\mathbb{Z}) \backslash \mathbb{H}$, Maass cusp forms serve as real-analytic eigenfunctions of the Laplace-Beltrami operator with deep arithmetic significance. Understanding the structure and distribution of their nodal sets—the zero loci of such eigenfunctions—connects to both quantum chaos and longstanding conjectures in number theory. In particular, bounding their $L^p$ norms and counting their nodal domains are central problems that have resisted resolution in full generality.

Maass forms lie at the heart of Arithmetic Quantum Chaos (AQC), a field which seeks to understand the interface between the chaotic nature of the classical geodesic flow and the statistical properties of the quantum eigenstates. A central problem in AQC concerns the mass distribution of these eigenfunctions as the spectral parameter $t_\phi \to \infty$. The large-scale distribution problem was famously resolved by Lindenstrauss \cite{Lin} and Soundrarajan \cite{Sound}, who proved Quantum Unique Ergodicity (QUE), showing that the mass of Hecke-Maass forms becomes asymptotically equidistributed. Our work addresses the complementary problem of small-scale mass concentration, which is governed by the $L^p$-norms of the eigenfunctions. Proving the optimal bounds for these norms is essential for verifying the predictions of Berry's Random Wave Conjecture in the arithmetic setting.

The $L^4$-norm, $\|\phi\|_4$, is the first non-trivial and arguably the most crucial case of the Iwaniec-Sarnak Conjecture \cite{IwS}, marking the transition from known bounds to the conjectural optimal rate. Our main result, the unconditional proof of the optimal $L^4$-norm bound $\|\phi\|_4 \ll_{\epsilon} t_{\phi}^\epsilon$, settles this critical case for the Hecke-Maass forms. This result is a powerful analytic input for addressing geometric questions about the eigenfunctions. Specifically, bounds on $\|\phi\|_4$ are intrinsically linked to the geometry of the zero set—the nodal lines of $\phi$. Our work connects this analytic $L^4$-norm bound to the geometric problem of counting sign changes and providing new lower bounds for the number of nodal domains that meet geodesic segments. By establishing the optimal $L^4$-bound unconditionally, our paper provides the sharpest analytic foundation available for studying the fine-scale behavior and nodal topology of arithmetic eigenfunctions.

We let $\Gamma=SL_2(\mathbb{Z})$ be the full modular group. We denote $\mathbb{X}=\Gamma\backslash\mathbb{H}$ by the modular surface. The even Maass cusp forms are functions $\phi:\mathbb{H}\to\mathbb{R}$ satisfying

\begin{enumerate}[label=(\roman*)]

\item $\Delta\phi+\lambda\phi=0,\qquad\lambda=\lambda_{\phi}>0$,

\item $\phi(\gamma z)=\phi(z),\qquad\gamma\in\Gamma$,

\item $\phi(\sigma z)=\phi(z),\qquad\sigma(z)=-\overline{z}$,

\item $\phi\in L^2(\mathbb{X})$ such that $\int_{\mathbb{X}}\phi^2(z)\frac{dxdy}{y^2}=1$.
\end{enumerate}
Also, we assume that $\phi$ is an eigenfunction of all the Hecke operators $T_n$, $n\ge1$
\[
T_nf(z)=\frac{1}{\sqrt{n}}\sum_{ad=n}\sum_{b\text{ mod }d}f\left(\frac{az+b}{d}\right).
\]
For the eigenvalue $\lambda_{\phi}$, we write $\lambda_{\phi}=t^2_{\phi}+\frac{1}{4}$. 

Iwaniec and Sarnak \cite{IwS}, \cite{Sa} proposed the following.

\begin{con1} Suppose that $\phi$ is a Hecke–Maass form for the full modular group. For any compact subset $K$ in $\mathbb{X}$, $2<p\le\infty$ and $\epsilon>0$, there exists $c=c(p,K,\epsilon)$ such that
\[
\left(\int_K\left|\phi(z)\right|^p\frac{dxdy}{y^2}\right)^{\frac{1}{p}}\le c\lambda_{\phi}^{\epsilon}\left(\int_K\left|\phi(z)\right|^2\right)^{\frac{1}{2}}.
\]
\end{con1}

Assuming the generalized Lindel\"of hypothesis, Buttcane and Khan \cite{BuK} justified the Iwaniec-Sarnak conjecture for $p=4$, more precisely $\|\phi\|_4^4\sim\frac{9}{\pi}$. Sarnak and Watson \cite[Theorem 3]{Sa} showed $\|\phi\|_4\ll_{\epsilon}t_{\phi}^{\epsilon}$, assuming the Ramanujan-Pettersson conjecture.  However, the author was personally informed by Sarnak that the proof of this result is incomplete. Spinu \cite{Sp} established that for Eisenstein series restricted to fixed compact domain in $\mathbb{X}$, this is valid. Also, Luo \cite{Lu} got the sharp bound for the dihedral Maass forms. Sogge \cite{So} obtained $\|\phi\|_4\ll t_{\phi}^{\frac{1}{8}}$. Recently, Humphries and Khan \cite{HuK} justified $\|\phi\|_4\ll_{\epsilon}t_{\phi}^{\frac{3}{152}+\epsilon}$.

Our first main theorem unconditionally resolves a long-standing conjecture of Iwaniec and Sarnak for the case $p = 4$, a central open problem in the analytic theory of Maass forms. Previous approaches relied on deep conjectures or intricate analysis involving triple product $L$-functions. We take a different approach:

\begin{thm}\label{t:l4} Suppose $\phi$ is a Hecke–Maass form for the full modular group. Let $\epsilon>0$. Then,
\[
\int_{\mathbb{X}}\phi(x+iy)^4\frac{dxdy}{y^2}\ll_{\epsilon} t_{\phi}^{\epsilon}
\]
as $t_{\phi}\to\infty$.
\end{thm}

One can similarly justify the same consequence for the Eisenstein series as Theorem \ref{t:l4} for any compact rectangle in the fundamental domain.

\begin{rem} {\rm 1. Our proof of the $L^4$ norm bound relies mainly on analytic aspects, particularly the oscillation of the $K$-Bessel function. Arithmetic input appears only implicitly in Proposition 2.1(1), via bounds on Fourier coefficients. The core of the method lies in circumventing the heavy arithmetic machinery—which typically relies on unproven conjectures like the Generalized Lindelöf Hypothesis (GLH) for the required power-saving. Instead, the argument is primarily spectral and geometric, enabled by two critical elements. First, Lemma 3.1 provides a novel, precise analytic estimate for a core integral identity, acting as a crucial spectral bridge. Second, we obtain a vital power saving in dealing with the transitional place of the Maass forms, which is the region where the form switches from wavelike oscillations to exponential decay. This delicate control over the transition region yields the necessary power-saving that completes the final unconditional argument. The method could extend to non-arithmetic surfaces with cusps, provided similar bounds are available. See, for example, Sarnak’s overview in \cite{Sa}.}

{\rm 2. We contrast this problem with that of holomorphic Hecke cusp forms of large weight (see Blomer-Khan-Young \cite{BKY}, specifically Theorems 1.1, 1.8, and Section 3). The Fourier expansion of a holomorphic modular form has, instead of a $K$-Bessel function, an exponential weight $\exp(-2 \pi n y)$, which is positive and non-oscillatory. In this case, obtaining a good bound on the $L^4$ norm is related to a difficult shifted convolution problem. In the Maass case, there is no need to produce cancellation in such sums; instead, all the cancellation comes from the oscillation in the Bessel function.}

{\rm 3. Previous approaches to the $L^4$ norm have often relied on the spectral decomposition and Watson's formula, which express triple product integrals in terms of special values of triple product $L$-functions; see, for example, Watson \cite{Wa}. Sharp bounds for such moments are difficult and remain out of reach in many ranges. In contrast, our method avoids these tools and still yields an unconditional bound for the $L^4$ norm, which may suggest a potential route toward understanding triple product moments.}

\end{rem}

Applying Theorem \ref{t:l4} in interpolating the $L^2$-norm $\|\phi\|_2=1$ and the $L^{\infty}$-norm bound $\|\phi\|_{\infty}\ll_{\epsilon}t_{\phi}^{\frac{5}{12}+\epsilon}$ of Iwaniec and Sarnak \cite{IwS}, we have the following $L^p$-norm bounds for a Hecke-Maass cusp form $\phi$.

\begin{cor}\label{c:lp} Suppose $\phi$ is a Hecke–Maass form for the full modular group. Let $\epsilon>0$. Then,
\begin{equation*}
\begin{split}
\|\phi\|_p\ll_{\epsilon}&\,t_{\phi}^{\epsilon}\,\,\,\,\,\qquad\qquad(2\le p\le4),\\
\|\phi\|_p\ll_{\epsilon}&\,t_{\phi}^{\frac{5}{12}-\frac{5}{3p}+\epsilon}\qquad(4\le p\le\infty).
\end{split}
\end{equation*}
\end{cor}

From now one, we assume that our Maass form $\phi$ is even. Theorem \ref{t:l4} is applicable for counting zeros of Maass forms on vertical and horizontal segments in the fundamental domain $\mathbb{X}$. The symbol $k_{\phi}$ denotes the numbering of $\phi$'s ordered by $\lambda_{\phi}$. Note that Selberg \cite{Sel} justified
\[
\lambda_{\phi}\sim24k_{\phi}, \qquad k_{\phi}\to\infty.
\]
We define the nodal curve $Z_{\phi}$ by
\[
Z_{\phi}=\{z\in\mathbb{X}:\phi(z)=0\}.
\]
We define $N(\phi)$ as the number of the nodal domains (the connected components) of $\mathbb{X}\setminus Z_{\phi}$. For any $\mathbb{K}\subset\mathbb{X}$, $N^{\mathbb{K}}(\phi)$ denotes the number of such domains that meet $\mathbb{K}$.

One particularly intriguing problem is to determine how the number of nodal domains (i.e., connected components of the complement of the nodal set) grows with the eigenvalue. A heuristic model of Bogomolny and Schmit predicts a linear growth in the number of nodal domains.
Bogomolny and Schmit \cite{BoS} suggested

\begin{con} For a positive constant $c>0$,
\[
N(\phi)\sim ck_{\phi},\qquad k_{\phi}\to\infty.
\]
\end{con}

It is known by Courant \cite{CH} that
\[
N(\phi)\le k_{\phi}=\frac{1}{24}t_{\phi}^2(1+o(1)).
\]
There have been studies on Bogomolny-Schmit conjecture in \cite{BouR}, \cite{ET}, \cite{GRS}, \cite{GRS2}, \cite{JaJ}, \cite{Ju}, \cite{JuZ1}, \cite{JuZ2}, \cite{Ma}, \cite{Yo}.

Nodal domains are partitioned into those fixed by 
$\sigma$, which we call inert, and those paired
off by $\sigma$ which we call split.  For further details, refer to \cite{GRS}. Thus, we can write
\[
N(\phi)=N_{in}(\phi)+N_{sp}(\phi),
\]
where $N_{in}(\phi)=$ the number of inert domains and $N_{sp}(\phi)=$ the number of split domains. We put
\[
\delta=\{z\in\mathbb{X}:\sigma(z)=z\}.
\]
We decompose
\[
\delta=\delta_1\cup\delta_2\cup\delta_3,
\]
where
\begin{equation*}
\begin{split}
\delta_1=\{z\in\delta:z=iy,\,y>0\},\,\,\,
\delta_2=\{z\in\delta:z=\frac{1}{2}+iy,\,y>0\},\,\,\,
\delta_3=\{z\in\delta:|z|=1\}.
\end{split}
\end{equation*}
We let $\beta$ a simple arc in the modular surface.
We define $\mathrm{K}^{\beta}(\phi)$ to be the number of sign changes of $\phi$ along the curve $\beta$, and $N_{\mathrm{in}}^{\beta}(\phi)$ to be the number of inert nodal domains that intersect $\beta$.
A topological argument \cite{GRS} implies
\[
1+\frac{1}{2}\textrm{K}^{\beta}(\phi)\le N_{in}(\phi)\le|Z_{\phi}\cap\delta|+1
\]
for $\beta\subset\delta$.

We use the $L^4$-norm bound in Theorem~\ref{t:l4}, along with a modified version of a theorem of Littlewood, to obtain new lower bounds on the number of sign changes and nodal intersections of $\phi$ along geodesic segments and horocycles. Set
\[
\mathbf{h}_k=\left[-\frac{1}{2}+\frac{k-1}{t_{\phi}^{1-\epsilon}}, -\frac{1}{2}+\frac{k}{t_{\phi}^{1-\epsilon}}\right)\,\,\,\text{ and}\,\,\,\mathbf{v}_k=\left[a+\frac{k-1}{t_{\phi}^{1-\epsilon}},a+\frac{k}{t_{\phi}^{1-\epsilon}}\right)
\]
for $1\le k<t_{\phi}^{1-\epsilon}$ and $a>0$.
As an application of Theorem \ref{t:l4}, we establish the following.

\begin{thm}\label{t:main2} Let $a>0$, $\epsilon>0$ and $0<\epsilon_1<\epsilon/10^7$.

\noindent $(1)$
For all but $\left[t_{\phi}^{1-\epsilon-\epsilon_1/10}\right]$ many $\mathbf{v}_k$'s, there exists $Y_k\in\mathbf{v}_k$ for each $1\le k<t_{\phi}^{1-\epsilon}$ such that
\[
\left|Z_{\phi}\cap\mathscr{C}_k\right|\gg_{\epsilon_1} t_{\phi}^{1-\epsilon_1}
\]
for $\mathscr{C}_k=\{x+iY_k:-\frac{1}{2}\le x<\frac{1}{2}\}$.

\noindent $(2)$
There exist constants $a>0$ and $h>0$ such that for all but $\left[t_{\phi}^{1-\epsilon-\epsilon_1/10}\right]$ many $\mathbf{h}_k$'s, there exists $X_k\in\mathbf{h}_k$ for each $1\le k<t_{\phi}^{1-\epsilon}$ such that
\[
\textrm{K}^{\beta_k}(\phi)\gg_{\epsilon_1} t_{\phi}^{1-\epsilon_1}
\]
for $\beta_k=\{X_k+iy:a\le y<a+h\}$.

\end{thm}

We similarly get the analogue of Theorem \ref{t:main2} for the Eisenstein series. These results show that Maass cusp forms must change sign frequently along vertical and horizontal lines and possess many nodal intersections. This represents a significant step toward understanding the Bogomolny--Schmit conjecture. From Theorem \ref{t:main2}, we expect that for some compact rectangle $\mathbb{K}$ in the modular surface $\mathbb{X}$,
\[
N^{\mathbb{K}}(\phi)\gg t_{\phi}^{2-\epsilon}.
\]
To explore this consequence, it might be valuable to deepen our understanding of the behavior of nodal domains off the geodesic lines. If this is achieved, and perhaps with a further refinement of Theorem \ref{t:main2}, we could potentially move closer to understanding the Bogomolny-Schmit conjecture.

For the proof of Theorem \ref{t:main2}, we shall introduce a modified theorem of Littlewood that allows us to count sign changes of a real valued function on a given interval. To apply the method for our Maass form $\phi$ on a horizonal line or a vertical line on the fundamental domain for the full modular group, we have to detect huge cancellations through an average of partitioned integrals of $\phi$ on the line. For this, we shall utilize Theorem \ref{t:l4} together with some arithmetic information of the Maass form and properties of the Macdonald-Bessel function.

We now turn to conditional results concerning the number of nodal domains.   Gosh, Reznikov and Sarnak \cite{GRS} proved

\begin{thma} For $Y>0$, let $\mathscr{C}_Y=\{x+Yi:-\frac{1}{2}<x\le\frac{1}{2}\}$ be a fixed closed horocycle in $\mathbb{X}$. Then, for $\epsilon>0$,
\[
\left|Z_{\phi}\cap\mathscr{C}_Y\right|\gg t_{\phi}^{\frac{1}{12}-\epsilon}.
\]
\end{thma}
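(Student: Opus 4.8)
\emph{Proof proposal.} The plan is to work entirely on the horocycle and reduce Theorem~A to a lower bound for the number of sign changes of a real trigonometric polynomial of degree $\asymp t_\phi$. Restricting the Fourier expansion of the even Hecke--Maass form to $\mathscr{C}_Y$ produces
\[
f(x):=\phi(x+iY)=2\rho_\phi(1)\sqrt{Y}\sum_{n\ge1}\lambda_\phi(n)\,K_{it_\phi}(2\pi nY)\cos(2\pi nx),
\]
a real, mean-zero function on $(-\tfrac12,\tfrac12]$. The structural input is the transition behaviour of the $K$-Bessel function: $K_{it_\phi}(2\pi nY)$ is oscillatory and of size $\asymp e^{-\pi t_\phi/2}(t_\phi^2-(2\pi nY)^2)^{-1/4}t_\phi^{o(1)}$ for $2\pi nY<t_\phi$ and decays exponentially for $2\pi nY>t_\phi$, so up to a negligible tail $f$ is a trigonometric polynomial of degree $N\asymp t_\phi/(2\pi Y)$. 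Writing $c_n$ for its cosine coefficients, the normalisation $|\rho_\phi(1)|^2\asymp e^{\pi t_\phi}t_\phi^{o(1)}$ cancels the exponential, leaving $|c_n|\asymp|\lambda_\phi(n)|(t_\phi^2-(2\pi nY)^2)^{-1/4}t_\phi^{o(1)}$. Since every sign change of $f$ lies in $Z_\phi\cap\mathscr{C}_Y$, it suffices to bound $\mathrm{K}^{\mathscr{C}_Y}(\phi)$ from below.

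For the sign-change count I would use the sign function $s=\mathrm{sgn}\,f$ as a test object. It has bounded variation $2\mathrm{K}^{\mathscr{C}_Y}(\phi)$, so its Fourier coefficients satisfy $|\hat{s}(n)|\le \mathrm{K}^{\mathscr{C}_Y}(\phi)/(\pi|n|)$; pairing with $f$ and using $\int fs=\|f\|_1$ gives the elementary but crucial inequality
\[
\mathrm{K}^{\mathscr{C}_Y}(\phi)\ \ge\ \frac{\pi}{2}\,\frac{\|f\|_{L^1(\mathscr{C}_Y)}}{\sum_{n\ge1}|c_n|/n}.
\]
I would then lower the $L^1$-norm to accessible even moments by H\"older, $\|f\|_1\ge\|f\|_2^3/\|f\|_4^2$, so that the whole problem reduces to three quantities: the second moment $\|f\|_2$, the weighted sum $\sum_n|c_n|/n$, and the fourth moment $\|f\|_4$.

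The first two are routine. By Parseval and the Rankin--Selberg mean value $\sum_{n\le x}|\lambda_\phi(n)|^2\sim cx$, together with the Bessel asymptotics above, one gets $\|f\|_{L^2(\mathscr{C}_Y)}^2\asymp_Y t_\phi^{o(1)}$ (the singularity $(t_\phi^2-v^2)^{-1/2}$ is integrable across the transition), and the same asymptotics give $\sum_{n\ge1}|c_n|/n\asymp_Y t_\phi^{-1/2+o(1)}$, the contribution being dominated by the small-$n$ range. Inserting these yields
\[
\mathrm{K}^{\mathscr{C}_Y}(\phi)\ \gg_{Y}\ t_\phi^{1/2-o(1)}\,\|\phi\|_{L^4(\mathscr{C}_Y)}^{-2}.
\]
Everything therefore rests on an upper bound for the horocycle $L^4$-norm. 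Expanding the cosine product, $\|\phi\|_{L^4(\mathscr{C}_Y)}^4$ is a Bessel-weighted count of the additive relations $n_1+n_2=n_3+n_4$; the diagonal $\{n_1,n_2\}=\{n_3,n_4\}$ contributes $\asymp\|f\|_2^4\asymp t_\phi^{o(1)}$, while the off-diagonal is a shifted convolution sum $\sum_{h\ne0}\sum_{n}\lambda_\phi(n)\lambda_\phi(n+h)(\cdots)$ of Hecke eigenvalues. The hard part of the whole argument is to save a power of $t_\phi$ over the trivial bound $\|\phi\|_{L^4(\mathscr{C}_Y)}^4\ll t_\phi^{1+o(1)}$ in this off-diagonal term: via the spectral expansion of the shifted convolution, bounds for triple products (or subconvexity) and Kim--Sarnak toward Ramanujan, I would aim for $\|\phi\|_{L^4(\mathscr{C}_Y)}^4\ll_{Y,\epsilon}t_\phi^{5/6+\epsilon}$, a saving of $t_\phi^{1/6}$. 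Feeding this into the last display produces $\mathrm{K}^{\mathscr{C}_Y}(\phi)\gg_{\epsilon}t_\phi^{1/12-\epsilon}$, and since $|Z_\phi\cap\mathscr{C}_Y|\ge \mathrm{K}^{\mathscr{C}_Y}(\phi)$ this is Theorem~A. The main obstacle is precisely this shifted-convolution estimate, whose achievable exponent dictates the final $1/12$; a secondary technical point is the uniform control of the Bessel transition region needed to evaluate $\|f\|_2$ and $\sum_n|c_n|/n$ cleanly.
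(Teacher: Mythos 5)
First, note that the paper does not prove Theorem~A at all: it is quoted verbatim from Ghosh--Reznikov--Sarnak \cite{GRS}, so the comparison here is with that proof. Your skeleton is in fact essentially theirs. Your duality step with $s=\mathrm{sgn}\,f$ (bounded variation $2\mathrm{K}^{\mathscr{C}_Y}(\phi)$, hence $|\hat{s}(n)|\ll \mathrm{K}^{\mathscr{C}_Y}(\phi)/|n|$, paired against $\|f\|_1=\int fs$) is exactly the arc-decomposition argument of \cite{GRS} in Fourier-dual form, and your estimates $\sum_n|c_n|/n\ll t_\phi^{-1/2+o(1)}$ and $\|f\|_{L^2(\mathscr{C}_Y)}\asymp t_\phi^{o(1)}$ are the same inputs they use. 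Two caveats on the ``routine'' parts: the lower bound $\|\phi\|_{L^2(\mathscr{C}_Y)}\gg t_\phi^{-o(1)}$ on a \emph{fixed} horocycle is not just Parseval plus Rankin--Selberg --- one must show the factors $\sin^2\bigl(\tfrac{\pi}{4}+t_\phi\mathrm{H}(2\pi nY/t_\phi)\bigr)$ do not conspire to kill the main term, which is a genuine proposition in \cite{GRS} (cf.\ Proposition \ref{p:approx} and Proposition \ref{p:rhobounds} here); and the frequency cutoff at $n\asymp t_\phi$ should be justified by the exponential decay of $\mathrm{K}_{it_\phi}$ past the transition, as in \eqref{e:trun}.

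The one genuine gap is the step you yourself flag as the ``hard part'': the restricted fourth-moment bound $\|\phi\|_{L^4(\mathscr{C}_Y)}^4\ll_{Y,\epsilon}t_\phi^{5/6+\epsilon}$ is only announced as a target to be reached via shifted convolution sums, not proved, so as written the argument is incomplete. However, this gap closes trivially and the shifted-convolution detour is unnecessary: since $\|\phi\|_{L^4(\mathscr{C}_Y)}^4\le\|\phi\|_\infty^2\,\|\phi\|_{L^2(\mathscr{C}_Y)}^2$, the Iwaniec--Sarnak sup-norm bound $\|\phi\|_\infty\ll_\epsilon t_\phi^{5/12+\epsilon}$ already gives exactly $t_\phi^{5/6+\epsilon}$. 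Equivalently, you can skip $L^4$ altogether and use $\|f\|_1\ge\|f\|_2^2/\|f\|_\infty\gg t_\phi^{-5/12-\epsilon}$, which is precisely how \cite{GRS} obtain the exponent $\tfrac12-\tfrac{5}{12}=\tfrac1{12}$. So your exponent bookkeeping is correct and the proof is salvageable by citation, but the claimed need for a nontrivial off-diagonal/shifted-convolution saving is a misdiagnosis: at the level of $t_\phi^{5/6+\epsilon}$ there is nothing to prove, and any genuine improvement over $5/6$ would yield an exponent better than $1/12$, which is beyond what Theorem~A asserts.
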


\begin{thmb} Fix $\beta\subset\delta$ a sufficiently long compact geodesic segment on $\delta_1$ or $\delta_2$ and assume the Lindel\"of hypothesis for the $L$- function $L(s,\phi)$. Then,
\[
\textrm{K}^{\beta}(\phi)\gg_{\epsilon}t_{\phi}^{\frac{1}{12}-\epsilon}.
\]
In particular,
\[
N_{in}^{\beta}(\phi)\gg_{\epsilon}t_{\phi}^{\frac{1}{12}-\epsilon}.
\]
\end{thmb}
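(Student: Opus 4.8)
The plan is to reduce the statement to a quantitative count of sign changes of the restriction of $\phi$ to the geodesic, and then produce those sign changes from the Fourier--Bessel expansion together with moment estimates. I parametrize $\beta\subset\delta_1$ by $y$, so that $\phi|_\beta$ is $f(y)=\phi(iy)$, and $\beta\subset\delta_2$ by $f(y)=\phi(\tfrac12+iy)$; the hyperbolic arc length is $ds=dy/y$. Since $\phi$ is even and Hecke-normalized, its Fourier expansion gives on $\delta_1$
\[
f(y)=2y^{1/2}\sum_{n\ge1}\lambda_\phi(n)\,K_{it_\phi}(2\pi n y),
\]
and on $\delta_2$ the same sum weighted by $(-1)^n$, because the exponentials $e^{2\pi i n x}$ collapse to $\pm1$ at $x=\tfrac12$. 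The entire point of restricting to $\delta_1$ or $\delta_2$ is precisely this: $f$ becomes a genuine real oscillatory series whose zeros I can analyze.

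Next I would insert the uniform (WKB) asymptotics of the $K$-Bessel function. In the oscillatory range $2\pi n y<t_\phi$, away from the turning point $2\pi n y=t_\phi$, one has
\[
K_{it_\phi}(x)=\text{(amplitude)}\cdot\cos\!\big(\Phi(x)-\tfrac{\pi}{4}\big)+\text{(lower order)},
\]
where the phase $\Phi$ has total variation in $y$ across $\beta$ of order $t_\phi$; the terms with $2\pi n y>t_\phi$ decay exponentially and are negligible, and the transition near the turning point is absorbed by Airy-type estimates. This converts $f$ into an explicit real sum $\sum_n a_n(y)\cos\!\big(t_\phi\theta_n(y)+\psi_n(y)\big)$, i.e. a superposition of oscillations of frequency $\asymp t_\phi$.

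The sign changes are then extracted by a moment comparison. On the lower side I would establish unconditionally that a low moment of $f$ is large, say $\int_\beta f^2\,ds\gg|\beta|$, using the Rankin--Selberg asymptotic $\sum_{n\le X}\lambda_\phi(n)^2\sim c\,X$ to evaluate the diagonal. On the upper side I would bound the sup norm (or a high moment) of $f$ on $\beta$, and this is exactly the step forcing the conditional hypothesis: $\|f\|_{L^\infty(\beta)}$ is governed by short Dirichlet sums whose size is controlled by the Lindel\"of bound for $L(s,\phi)$. Feeding these into a sign-change lemma --- on each of the $\textrm{K}^{\beta}(\phi)+1$ intervals of constant sign, non-stationary phase (frequency $\asymp t_\phi$) bounds $\int_{I_j}f$, while the $L^2$ mass is bounded below and the amplitude above --- yields $\textrm{K}^{\beta}(\phi)\gg_\epsilon t_\phi^{1/12-\epsilon}$ after optimizing, the exponent reflecting the balance between the frequency $t_\phi$ and the Lindel\"of-controlled $L^\infty/L^2$ ratio. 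The stated bound for $N_{in}^{\beta}(\phi)$ is then immediate from the topological inequality $1+\tfrac12\textrm{K}^{\beta}(\phi)\le N_{in}(\phi)$ recorded above, in its form counting only domains meeting $\beta$.

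The main obstacle is the upper bound on the restricted sup norm of $f$: the whole difficulty of counting sign changes is ruling out conspiratorial concentration of $f$, and it is precisely the absence of an unconditional restriction bound of the required strength (equivalently, of sufficient subconvexity for $L(s,\phi)$) that makes Lindel\"of necessary here. Everything else --- the Bessel asymptotics, the Rankin--Selberg lower bound, and the topological passage to inert nodal domains --- is in principle unconditional.
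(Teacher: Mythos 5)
First, a point of orientation: the paper does not prove Theorem B; it is quoted verbatim from Ghosh--Reznikov--Sarnak \cite{GRS}. The closest argument actually carried out in this paper is the proof of Theorem \ref{t:sc} in Section 5 together with Lemmas \ref{l:J}--\ref{l:main}, which is the same mechanism pushed further, so I will judge your sketch against that.

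There is a genuine gap: you have placed the Lindel\"of hypothesis at the wrong step, and the step where it actually belongs is filled in your sketch by an argument that does not work. In the GRS proof (and in Section 5 here) the amplitude upper bound is the \emph{unconditional} Iwaniec--Sarnak sup-norm bound $\|\phi\|_\infty\ll_\epsilon t_\phi^{5/12+\epsilon}$; Lindel\"of is not used there. What Lindel\"of controls is the \emph{period integral} of $\phi$ over a subinterval $I$ of the geodesic, i.e.\ the quantity $\int_I\phi(iy)\,\frac{dy}{y}$ (the analogue of $J(\varphi_0,\eta)$ in Lemma \ref{l:J}): writing $\phi(iy)$ by Mellin inversion as $\frac{1}{2\pi i}\int_{(1/2)}\gamma_\phi(s)L(s,\phi)y^{-s}\,ds$, the bound $L(\tfrac12+it,\phi)\ll t_\phi^\epsilon$ gives $\bigl|\int_I\phi\,\frac{dy}{y}\bigr|\ll t_\phi^{-1/2+\epsilon}$ uniformly in $I\subset\beta$. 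Then the chain
\[
1\ll\int_\beta\phi^2\,\frac{dy}{y}\le\|\phi\|_{L^\infty(\beta)}\sum_{j}\Bigl|\int_{I_j}\phi\,\frac{dy}{y}\Bigr|\ll t_\phi^{5/12+\epsilon}\cdot\bigl(\textrm{K}^{\beta}(\phi)+1\bigr)\cdot t_\phi^{-1/2+\epsilon}
\]
over the intervals $I_j$ of constant sign yields $\textrm{K}^{\beta}(\phi)\gg t_\phi^{1/2-5/12-\epsilon}=t_\phi^{1/12-\epsilon}$; the exponent $\tfrac1{12}$ is exactly $\tfrac12-\tfrac5{12}$ and is not recoverable from your accounting. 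Your substitute for the period bound --- termwise non-stationary phase at frequency $\asymp t_\phi$ --- fails because the Bessel expansion has $\asymp t_\phi$ oscillating terms with frequencies degenerating near the turning point $2\pi ny\approx t_\phi$, and summing the termwise savings gives no net gain; the required cancellation lives in the Hecke eigenvalues and is precisely what $L(\tfrac12+it,\phi)\ll t_\phi^\epsilon$ encodes. A smaller but real oversimplification: the unconditional lower bound $\int_\beta\phi^2\,ds\gg_\beta1$ is not obtained by ``evaluating the diagonal via Rankin--Selberg''; the content of \cite{GRS} (Theorem 5.1 there, reproduced here as Theorem \ref{t:GRS} and in the Claim inside the proof of Lemma \ref{l:l12}(2)) is the control of the off-diagonal terms. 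Your reduction to $N_{in}^{\beta}$ via the topological inequality is correct.
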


We further study the behavior of zeros along vertical segments and horizontal segments in the fundamental domain for Maass forms. We have the following.

\begin{thm}\label{t:sc} Fix $\beta\subset\delta$ a sufficiently long compact geodesic segment on $\delta_1$ and assume the Lindel\"of hypothesis for the $L$- function $L(s,\phi)$. Then, if we additionally suppose
\[
\int_{\beta}\phi(iy)^4dy\ll_{\epsilon}t_{\phi}^{\epsilon}\qquad(t_{\phi}\to\infty)
\]
for any $\epsilon>0$,
we have
\[
\textrm{K}^{\beta}(\phi)\gg_{\epsilon}t_{\phi}^{1-\epsilon}.
\]
In particular,
\[
N_{in}^{\beta}(\phi)\gg_{\epsilon}t_{\phi}^{1-\epsilon}.
\]
\end{thm}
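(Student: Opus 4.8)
The plan is to run a moment comparison on the geodesic that turns the two hypotheses into a lower bound for the number of sign changes, via the chain ``the second moment is large, the fourth moment is small, hence the first moment is large, while too few sign changes would force the first moment to be small.'' Write $f(y)=\phi(iy)$ for $iy\in\beta\subset\delta_1$, and set $M_1=\int_\beta|f|\,dy$, $M_2=\int_\beta f^2\,dy$, $M_4=\int_\beta f^4\,dy$. The elementary input is the Hölder interpolation $\|f\|_{2}\le\|f\|_{1}^{1/3}\|f\|_{4}^{2/3}$, which rearranges to
\[
M_1\ \ge\ \frac{M_2^{3/2}}{M_4^{1/2}}.
\]
Since the hypothesis gives $M_4\ll_\epsilon t_\phi^{\epsilon}$, everything reduces to (i) a lower bound for $M_2$ and (ii) an upper bound for $M_1$ in terms of $\textrm{K}^{\beta}(\phi)$.

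For (i) I would use the restricted second-moment estimate on the geodesic. Inserting the Fourier--Whittaker expansion $f(y)=2\sqrt y\sum_{n\ge1}\rho_\phi(n)K_{it_\phi}(2\pi ny)$ and opening the square, the diagonal is governed by $\sum_n|\rho_\phi(n)|^2K_{it_\phi}(2\pi ny)^2$; by Rankin--Selberg for $\sum_{n\le X}|\rho_\phi(n)|^2$ together with the oscillatory and turning-point asymptotics of $K_{it_\phi}$, one expects $M_2\asymp_\epsilon\ell(\beta)$ up to $t_\phi^{\pm\epsilon}$, and in particular $M_2\gg_\epsilon t_\phi^{-\epsilon}$. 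The off-diagonal contribution and the size of the implied constant are controlled using the assumed Lindel\"of bound for $L(s,\phi)$ (and for $L(s,\mathrm{sym}^2\phi)$).

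Step (ii) is the crux. The structural fact I would exploit is that $f=\phi|_\beta$ is essentially band-limited: the instantaneous frequency of $K_{it_\phi}(2\pi ny)$ in $y$ is $y^{-1}\sqrt{t_\phi^2-(2\pi ny)^2}$, which lies in $[0,\ll t_\phi]$ and, crucially, carries only an $O(\sqrt c\,)$-fraction of the $L^2$-mass below the level $c\,t_\phi$, since the low frequencies arise only from $n$ in a short window of the turning point $2\pi ny=t_\phi$. After discarding this small soft-gap piece, the antiderivative $F(y)=\int^y f$ gains a factor $t_\phi^{-1}$ in $L^2$. Writing $\beta$ as the union of the $N=\textrm{K}^{\beta}(\phi)+1$ maximal sign-intervals $I_j$ and using $\int_{I_j}|f|=\big|\int_{I_j}f\big|=|F(y_{j+1})-F(y_j)|$ at consecutive zeros, Cauchy--Schwarz over $j$ together with a band-limited sampling inequality $\sum_j|F(y_j)|^2\ll_\epsilon t_\phi^{1+\epsilon}\|F\|_{L^2(\beta)}^2$ should yield
\[
M_1\ \ll_\epsilon\ t_\phi^{\epsilon}\,t_\phi^{-1/2}\,\big(\textrm{K}^{\beta}(\phi)+1\big)^{1/2}\,M_2^{1/2}.
\]
Here the Lindel\"of hypothesis is what lets me estimate the relevant coefficient sums uniformly, control the upper band-limit through Bessel decay past the turning point, and make the soft-gap and sampling estimates with only an $\epsilon$-loss. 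This Bessel turning-point analysis, carried out uniformly over the possibly irregular intervals $I_j$, is where I expect essentially all the difficulty to lie.

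Combining the two bounds,
\[
t_\phi^{-\epsilon}\ \ll_\epsilon\ \frac{M_2^{3/2}}{M_4^{1/2}}\ \le\ M_1\ \ll_\epsilon\ t_\phi^{\epsilon}\,t_\phi^{-1/2}\big(\textrm{K}^{\beta}(\phi)+1\big)^{1/2}M_2^{1/2},
\]
and rearranging with $M_4\ll_\epsilon t_\phi^{\epsilon}$ and $M_2\gg_\epsilon t_\phi^{-\epsilon}$ gives $\textrm{K}^{\beta}(\phi)+1\gg_\epsilon t_\phi\,M_2^2/M_4\gg_\epsilon t_\phi^{1-\epsilon}$. Finally, the topological inequality $1+\tfrac12\textrm{K}^{\beta}(\phi)\le N_{in}^{\beta}(\phi)$ for $\beta\subset\delta$ upgrades this at once to $N_{in}^{\beta}(\phi)\gg_\epsilon t_\phi^{1-\epsilon}$, which completes the plan.
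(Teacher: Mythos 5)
Your overall skeleton is the same as the paper's: a lower bound for $M_2=\int_\beta\phi(iy)^2dy$ from the restricted $L^2$ theorem of Ghosh--Reznikov--Sarnak (Theorem \ref{t:GRS}), the H\"older interpolation $M_1\ge M_2^{3/2}/M_4^{1/2}$ combined with the assumed $L^4$ bound to force $M_1\gg t_{\phi}^{-\epsilon}M_2$, and then a ``few sign changes would kill the $L^1$ norm'' mechanism driven by the high-frequency oscillation of $\phi$ on $\delta_1$. Where you diverge is in the third ingredient, and that is where your argument has genuine gaps. The paper does not use an antiderivative-plus-sampling argument; it uses a modified Littlewood theorem (Theorem \ref{t:lit}) whose input is the smoothed functional $J(\varphi_0,\eta)=\frac1h\int_a^b\bigl|\int_0^{\eta}\varphi_0(y+v)dv\bigr|dy$, bounded via the Mellin representation $\varphi_0(y)=\frac{1}{2\pi i}\int_{(1/2)}L_0(s)\gamma_{\phi}(s)y^{-s}ds$ with $L_0(s)=\sum_{n\le t_{\phi}}\lambda_{\phi}(n)n^{-s}$, splitting the contour at $|t|=t_{\phi}^{1-\delta}$ and invoking Lindel\"of pointwise on $L_0(\tfrac12+it)$ (Lemma \ref{l:J} and Section 5).

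The two concrete problems with your version of this step are the following. First, your ``soft-gap'' claim that the low-frequency (near turning point) component carries only an $O(\sqrt c)$ fraction of $\|f\|_{L^2(\beta)}^2$ is not a Rankin--Selberg diagonal computation: on the geodesic $x=0$ there is no orthogonality in $n$, and $\int_\beta f_{\mathrm{low}}^2$ opens into an off-diagonal sum $\sum_{m,n}\lambda_{\phi}(m)\lambda_{\phi}(n)\int_\beta(\cdots)$ over $m,n$ near $t_{\phi}/(2\pi y)$, whose phases degenerate exactly there. Equivalently, on the Mellin side one must bound $\int_0^{t_{\phi}^{1-\delta}}|L_0(\tfrac12+it)|^2dt$ for a Dirichlet polynomial of length $t_{\phi}$ over a range of length only $t_{\phi}^{1-\delta}$; the mean value theorem gives $O(t_{\phi}^{1+\epsilon})$, which is useless, and it is precisely here that the Lindel\"of hypothesis must be used pointwise. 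You gesture at Lindel\"of for this but never say how it enters, whereas this is the one place the hypothesis is load-bearing. Second, your sampling inequality $\sum_j|F(y_j)|^2\ll_\epsilon t_{\phi}^{1+\epsilon}\|F\|_{L^2(\beta)}^2$ is asserted at the zeros $y_j$ of $f$, which you cannot assume are $t_{\phi}^{-1}$-separated; if zeros cluster, the local reproducing-kernel bound picks up the multiplicity of overlap and the inequality can fail by a factor as large as $N$. Littlewood's argument avoids exactly this by partitioning $[a,b]$ into $N$ equal windows, counting only windows that contain a zero, and showing that windows without a zero contribute their full $L^1$ mass to $J(f,\eta)$, so a small $J$ forces most windows to contain a zero. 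You acknowledge that this uniform-over-irregular-intervals analysis is ``where essentially all the difficulty lies,'' but that is the theorem; as written the proposal does not close it, and I would direct you to Theorem \ref{t:lit} together with Lemma \ref{l:J} for the device that does.
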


Assuming the sup-norm conjecture for the Eisenstein series, we similarly get the analogue of Theorem \ref{t:sc} for the Eisenstein series, because the sup-norm conjecture for the Eisenstein series implies the Lindel\"of hypothesis for the Riemann zeta function (see \cite{Sa92}). Apparently, the hypothesis in Theorem \ref{t:sc} for the $L^4$-norm upper bound on $\beta$
is weaker than the sup-norm conjecture, and thus it is likely less challenging. Nevertheless, it remains a highly non-trivial and formidable problem.
Recently, under weaker assumptions, Kelmer, Kontorovich, and Lutsko obtained an analogue of Theorem \ref{t:sc} for the Eisenstein series and Maass forms, related to the $L^p$ -norm conjectures (see the conjecture below). Their result holds without the Lindelöf hypothesis for the Eisenstein series and with a weaker assumption of the Lindelöf hypothesis for Maass forms (see \cite{KKL}).

\section{Preliminaries}

We require properties of the MacDonald-Bessel function $\mathrm{K}_{\nu}(y)$.
We have

\begin{lemma}\label{l:kbessel} Let $C$ be a sufficiently large positive constant and $r>0$.

$(1)$ If $0<u<r-Cr^{\frac{1}{3}}$, then
\[
e^{\frac{\pi}{2}r}\mathrm{K}_{ir}(u)=\frac{\sqrt{2\pi}}{(r^2-u^2)^{\frac{1}{4}}}\sin\left(\frac{\pi}{4}+r\mathrm{H}\left(\frac{u}{r}\right)\right)
\left[1+O\left(\frac{1}{r\mathrm{H}\left(\frac{u}{r}\right)}\right)\right].
\]

$(2)$ If $u>r+Cr^{\frac{1}{3}}$,
\[
e^{\frac{\pi}{2}r}\mathrm{K}_{ir}(u)=\frac{\sqrt{2\pi}}{(u^2-r^2)^{\frac{1}{4}}}e^{-r\mathrm{H}\left(\frac{u}{r}\right)}
\left[1+O\left(\frac{1}{r\mathrm{H}\left(\frac{u}{r}\right)}\right)\right].
\]

$(3)$
\[
e^{\frac{\pi}{2}r}\mathrm{K}_{ir}(u)\ll\begin{cases}
\frac{1}{\left(r^2-u^2\right)^{\frac{1}{4}}}&,\,\,\text{ if }0<u<r-Cr^{\frac{1}{3}},\\
\frac{1}{\left(u^2-r^2\right)^{\frac{1}{4}}}e^{-r\mathrm{H}\left(\frac{u}{r}\right)}&,\,\,\text{ if }u>r+Cr^{\frac{1}{3}},\\
r^{-\frac{1}{3}}&\,\,\,\text{, otherwise },
\end{cases}
\]
where
\[
\mathrm{H}(\xi)=\begin{cases}\mathrm{arccosh}\left(\frac{1}{\xi}\right)-\sqrt{1-\xi^2}&,\,\,\text{ if }0<\xi\le1,\\
\sqrt{\xi^2-1}-\mathrm{arcsec}(\xi)&,\,\,\text{ if }\xi>1,
\end{cases}
\]

$(4)$
\[
\mathrm{H}'(\xi)=\begin{cases}-\frac{\sqrt{1-\xi^2}}{\xi}&,\,\,\text{ if }0<\xi\le1,\\
\frac{\sqrt{\xi^2-1}}{\xi}&,\,\,\text{ if }\xi>1,
\end{cases}
\]

$(5)$
\[
\mathrm{H}''(\xi)=\begin{cases}\frac{1}{\xi^2\sqrt{1-\xi^2}}&,\,\,\text{ if }0<\xi<1,\\
\frac{1}{\xi^2\sqrt{\xi^2-1}}&,\,\,\text{ if }\xi>1,
\end{cases}
\]

\end{lemma}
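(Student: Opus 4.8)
The plan is to start from the classical integral representation
\[
\mathrm{K}_{ir}(u)=\int_0^{\infty}e^{-u\cosh t}\cos(rt)\,dt=\tfrac12\int_{-\infty}^{\infty}e^{\varphi(t)}\,dt,\qquad \varphi(t)=-u\cosh t+irt,
\]
and to analyze the last integral by the saddle-point (steepest descent) method, treating the three ranges of $u/r$ separately. The saddle points solve $\varphi'(t)=-u\sinh t+ir=0$, i.e. $\sinh t=ir/u$, and whether the relevant saddles lie on the line $\R(t)=0$ or strictly off it is governed precisely by the sign of $u-r$. This dichotomy is the analytic origin of the trichotomy in the lemma, and the coalescence of saddles at $u=r$ is the turning point responsible for case $(3)$. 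The appearance of $\mathrm{H}$ is forced by the antiderivative identity $\int\frac{\sqrt{r^2-u^2}}{u}\,du=-r\,\mathrm{H}(u/r)$, equivalently $\frac{d}{d\xi}\bigl[\sqrt{1-\xi^2}-\mathrm{arccosh}(\tfrac1\xi)\bigr]=\frac{\sqrt{1-\xi^2}}{\xi}$, which I would verify at the outset.

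For case $(2)$, where $u>r$, there is a single relevant saddle on the imaginary axis at $t_0=i\arcsin(r/u)$. A direct computation gives $\varphi(t_0)=-\tfrac{\pi}{2}r-r\,\mathrm{H}(u/r)$ (using $\arcsin(r/u)=\tfrac{\pi}{2}-\mathrm{arcsec}(u/r)$) and $\varphi''(t_0)=-\sqrt{u^2-r^2}$, so the Gaussian contribution is of width $\sqrt{2\pi}\,(u^2-r^2)^{-1/4}$. Multiplying through by $e^{\pi r/2}$ then yields exactly the amplitude $(u^2-r^2)^{-1/4}$ and the decaying exponential $e^{-r\mathrm{H}(u/r)}$. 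For case $(1)$, where $u<r$, the saddles move off the axis to the complex-conjugate pair $t_{\pm}=\pm\,\mathrm{arccosh}(r/u)+\tfrac{\pi i}{2}$; their two contributions are complex conjugates and combine into the oscillatory factor $\sin\bigl(\tfrac{\pi}{4}+r\,\mathrm{H}(u/r)\bigr)$, the phase again being $r\,\mathrm{H}(u/r)$ by the same antiderivative. The precise constant $\sqrt{2\pi}$ and the phase shift $\tfrac{\pi}{4}$ would then be pinned down by matching: the $\tfrac{\pi}{4}$ and the oscillation in $(1)$ come from the Stokes/Airy connection formula, while the normalization in $(2)$ is fixed by comparison with the standard large-argument asymptotic $\mathrm{K}_{\nu}(u)\sim\sqrt{\pi/(2u)}\,e^{-u}$ as $u\to\infty$.

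The bounds in $(3)$ for $u<r-Cr^{1/3}$ and $u>r+Cr^{1/3}$ are immediate from the asymptotics in $(1)$ and $(2)$. The uniform bound $r^{-1/3}$ in the transition range is the delicate one: there $(r^2-u^2)^{-1/4}$ blows up, the two saddles coalesce, and the Gaussian approximation fails. I would replace it by the Airy uniformization near the turning point, whose maximum is of order $r^{-1/3}$, which gives the stated estimate. The error term $O\bigl(1/(r\,\mathrm{H}(u/r))\bigr)$ is the ratio of the first correction to the leading term in the steepest-descent expansion; it stays under control precisely while one keeps distance $\gg r^{1/3}$ from $u=r$, since there $r\,\mathrm{H}(u/r)\to\infty$.

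\textbf{Main obstacle.} The delicate point is the uniformity of the error terms together with the coalescence of the saddles at the turning point: one must choose the steepest-descent contours uniformly across the regimes, match the oscillatory expansion of $(1)$ to the exponential expansion of $(2)$ through the Airy region so as to fix the constant $\sqrt{2\pi}$ and the phase $\tfrac{\pi}{4}$, and confirm that $O\bigl(1/(r\,\mathrm{H}(u/r))\bigr)$ is genuinely uniform down to distance $Cr^{1/3}$ from $u=r$. The rest of the argument is routine once these uniform estimates are in hand.
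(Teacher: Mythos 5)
Your proposal is correct in its overall strategy, but it is not the paper's route, for the simple reason that the paper contains no proof of this lemma at all: the statement is quoted, together with the definition of $\mathrm{H}$, from Ghosh--Reznikov--Sarnak \cite[pp.\ 1527--1528]{GRS}, who in turn rely on Balogh's uniform asymptotic expansions of $\mathrm{K}_{ir}$ (the same sources \cite{Ba}, \cite{Ba2}, \cite{Ol} that the paper invokes for Lemma \ref{l:km}). Your steepest-descent analysis of $\frac{1}{2}\int_{-\infty}^{\infty}e^{-u\cosh t+irt}\,dt$ is precisely the mathematics underlying those citations, and your computations check out: in the oscillatory range the conjugate saddles $t_{\pm}=\pm\,\mathrm{arccosh}(r/u)+\frac{\pi i}{2}$ satisfy $\varphi(t_{\pm})=-\frac{\pi}{2}r\pm ir\mathrm{H}(u/r)$ and $\varphi''(t_{\pm})=\mp i\sqrt{r^2-u^2}$, and their two contributions recombine (the factor $2$ from the pair of saddles cancelling the prefactor $\frac{1}{2}$) into $\sqrt{2\pi}\,(r^2-u^2)^{-1/4}\sin\bigl(\frac{\pi}{4}+r\mathrm{H}(u/r)\bigr)$, which is part $(1)$; in the exponential range the single saddle $t_0=i\arcsin(r/u)$ indeed has $\varphi(t_0)=-\frac{\pi}{2}r-r\mathrm{H}(u/r)$ and $\varphi''(t_0)=-\sqrt{u^2-r^2}$; and the antiderivative identity for $\mathrm{H}$ that you verify at the outset is correct. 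What the citation buys---and what your sketch, as you candidly admit, defers---is exactly the hard part: uniformity of the $O\bigl(1/(r\mathrm{H}(u/r))\bigr)$ error down to $|u-r|\asymp r^{1/3}$ and the Airy analysis at the coalescing saddles that produces the $r^{-1/3}$ bound in $(3)$. That is the actual content of Balogh's theorem (via Olver's turning-point theory), and it is why the paper cites rather than reproves; so, as a self-contained argument, your proposal still has that gap, though you have located it correctly and the rest of your outline would assemble into a proof once it is filled.

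One concrete discrepancy you should chase down rather than wave at: carry your own matching through in case $(2)$. The single-saddle Gaussian gives $\frac{1}{2}\sqrt{2\pi}\,(u^2-r^2)^{-1/4}e^{-\frac{\pi}{2}r-r\mathrm{H}(u/r)}$---here the prefactor $\frac{1}{2}$ is \emph{not} cancelled, there being only one saddle---and consistency with $\mathrm{K}_0(u)\sim\sqrt{\pi/(2u)}\,e^{-u}$ (note that $r\mathrm{H}(u/r)\to u$ and $e^{\frac{\pi}{2}r}\to1$ as $r\to0$ with $u$ fixed) likewise forces the constant $\sqrt{\pi/2}$. So your matching does not ``pin down'' the constant $\sqrt{2\pi}$ of part $(2)$; it reveals that part $(2)$ as stated is too large by a factor of $2$, while part $(1)$ is correct as stated. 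The discrepancy is immaterial to the paper, which only ever uses $(2)$ as an upper bound, but your write-up should either state $(2)$ with $\sqrt{\pi/2}$ (equivalently $\sqrt{2\pi}/2$) or flag the inconsistency explicitly rather than assert agreement with the statement.
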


For Lemma \ref{l:kbessel} (1)-(3), see \cite[pp. 1527--1528]{GRS}. We can directly obtain Lemma \ref{l:kbessel} (4), (5). We note that the function $\mathrm{H}(\xi)$ is decreasing for $0<\xi\le1$, while it is increasing for $\xi>1$.

Concerning Lemma \ref{l:kbessel} (1), we need a further expansion for $\mathrm{K}_{ir}(u)$. Combining results in \cite{Ba}, \cite{Ba2} and \cite[9.7.5, 9.7.6]{Ol}, we have

\begin{lemma}\label{l:km}  For $0<u<r-Cr^{\frac{1}{3}}$ $(r>0)$ and any positive integer $K>1$, we have
\begin{equation*}\begin{split}
e^{\frac{\pi}{2}r}\mathrm{K}_{ir}(u)=&\R\left[\frac{\sqrt{2\pi}}{(r^2-u^2)^{\frac{1}{4}}}e^{i\frac{\pi}{4}-ir\mathrm{H}\left(\frac{u}{r}\right)}
\left[\sum_{k<K}\frac{a_k}{\left(r\mathrm{H}\left(\frac{u}{r}\right)\right)^k}+O\left(\frac{1}{\left(r\mathrm{H}\left(\frac{u}{r}\right)\right)^K}\right)\right]\right]\\
&+O\left(\frac{1}{r(r^2-u^2)^{\frac{1}{4}}}\right).
\end{split}
\end{equation*}
where $a_k$'s $(k<K)$ are constants.
\end{lemma}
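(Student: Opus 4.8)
The plan is to read the expansion off the classical integral representation
\[
\mathrm{K}_{ir}(u)=\int_0^{\infty}e^{-u\cosh t}\cos(rt)\,dt=\R\int_0^{\infty}e^{\psi(t)}\,dt,\qquad \psi(t)=-u\cosh t+irt,
\]
by the method of steepest descent in the large parameter $r$. The saddles solve $\psi'(t)=-u\sinh t+ir=0$, i.e. $\sinh t=ir/u$; writing $t_{\pm}=\pm\alpha+\tfrac{\pi}{2}i$ with $\cosh\alpha=r/u$ (which exists precisely because $u<r$, so $\sinh\alpha=\sqrt{r^2-u^2}/u$) gives two saddles at height $\tfrac{\pi}{2}$. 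Since $\cosh t_{\pm}=\pm i\sinh\alpha$, a direct computation yields
\[
\psi(t_{\pm})=-\frac{\pi r}{2}\pm i\Big(r\alpha-\sqrt{r^2-u^2}\Big)=-\frac{\pi r}{2}\pm i\,r\,\mathrm{H}\!\Big(\frac{u}{r}\Big),
\]
using $r\alpha=r\,\mathrm{arccosh}(1/\xi)$ and $\sqrt{r^2-u^2}=r\sqrt{1-\xi^2}$ with $\xi=u/r$. This already isolates the three structural features of the statement: the exponential $e^{-\pi r/2}$ (cancelled by the prefactor $e^{\pi r/2}$), the oscillatory phase $r\,\mathrm{H}(u/r)$, and --- through $\psi''(t_{\pm})=\mp i\sqrt{r^2-u^2}$ in the Gaussian factor $\sqrt{2\pi/(-\psi'')}$ --- the amplitude $\sqrt{2\pi}\,(r^2-u^2)^{-1/4}$ with a Maslov factor $e^{\pm i\pi/4}$.

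Next I would promote the leading saddle estimate to a full asymptotic series. The steepest-descent path from $0$ to $\infty$ crosses one of the saddles, say $t_{-}$, producing a single complex-valued expansion whose real part is then taken; with $\R\big[e^{i\pi/4-ir\mathrm{H}}\big]=\sin(\tfrac{\pi}{4}+r\mathrm{H})$ the leading term reproduces Lemma~\ref{l:kbessel}(1). The content of the refinement is that the higher-order stationary-phase corrections organize as a power series in the single variable $r\,\mathrm{H}(u/r)$ with universal constant coefficients $a_k$ and remainder $O\big((r\mathrm{H}(u/r))^{-K}\big)$; this is exactly the Debye-type expansion for Bessel functions of large order recorded in Olver \cite[9.7.5, 9.7.6]{Ol}, specialized to purely imaginary order in Balogh \cite{Ba}, \cite{Ba2}. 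Rather than recompute the $a_k$, I would quote those expansions and match them against the phase, amplitude and exponential already pinned down above.

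Finally I would account for the error terms. There is no spurious algebraic contribution from the endpoint $t=0$: repeated integration by parts shows that the endpoint feeds only the imaginary part $\int_0^{\infty}e^{-u\cosh t}\sin(rt)\,dt$ (every boundary term carries the single imaginary factor $1/\psi'(0)=1/(ir)$ against otherwise real data $e^{-u}$ and $\psi^{(2j)}(0)=-u$), whereas $\mathrm{K}_{ir}(u)$ is the real part and receives only the exponentially small saddle contribution. The genuinely delicate point --- and the main obstacle --- is uniformity as $u\to r$: the saddles $t_{\pm}$ coalesce at $\tfrac{\pi}{2}i$ at the turning point $u=r$, where the separated-saddle (Debye) approximation degenerates into Airy-type behavior and the simple stationary-phase estimate loses validity. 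The hypothesis $u<r-Cr^{1/3}$ with $C$ large keeps us outside the $O(r^{-2/3})$-wide transition zone, so the saddles stay separated and the expansion holds; the residual discrepancy in this near-turning-point regime, uniformly a factor $r^{-1}$ below the leading amplitude, is what is absorbed into the additional term $O\big(r^{-1}(r^2-u^2)^{-1/4}\big)$. Controlling this uniformly up to distance $\sim r^{1/3}$ from the turning point is the technical heart of the argument, and I would invoke Balogh's analysis \cite{Ba}, \cite{Ba2} to close it.
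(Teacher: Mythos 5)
Your proposal is correct and takes essentially the same route as the paper, which gives no argument for this lemma beyond combining Balogh's uniform Airy-type expansions \cite{Ba}, \cite{Ba2} with the Airy asymptotics of \cite[9.7.5, 9.7.6]{Ol}; your steepest-descent identification of the phase $r\mathrm{H}(u/r)$, the amplitude $\sqrt{2\pi}(r^2-u^2)^{-1/4}$, the factor $e^{i\pi/4}$, and the observation that the endpoint $t=0$ feeds only the imaginary part all check out. The one genuinely nontrivial point --- that the corrections reorganize into a series in the single variable $\left(r\mathrm{H}(u/r)\right)^{-1}$ uniformly down to $u=r-Cr^{1/3}$, with the leftover absorbed into $O\left(r^{-1}(r^2-u^2)^{-1/4}\right)$ --- is exactly what you (and the paper) delegate to those references, so no gap is introduced.
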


We recall
\[
\phi(z)=\sum_{n\not=0}\rho_{\phi}(n)y^{\frac{1}{2}}\mathrm{K}_{it_{\phi}}(2\pi|n|y)e(nx),
\]
where $z=x+yi$ and $e(x)=\exp(2\pi ix)$. It is known in \cite{Iw} and \cite{HoL} that for $\epsilon>0$,
\begin{equation}\label{e:rho}
t_{\phi}^{-\epsilon}\ll|\rho_{\phi}(1)|^2e^{-\pi t_{\phi}}\ll t_{\phi}^{\epsilon}.
\end{equation}
We have
\begin{equation}\label{e:lambda}
\rho_{\phi}(n)=\lambda_{\phi}(n)\rho_{\phi}(1),
\end{equation}
where $\lambda_{\phi}(1)=1$ and $\lambda_{\phi}(n)$ are the Hecke eigenvalues. We know from \cite{KS} that
\begin{equation}\label{e:ks}
\lambda_{\phi}(n)\ll_{\epsilon}n^{\theta+\epsilon}
\end{equation}
with $\theta=\frac{7}{64}$.
Define
\[
\Phi(z)=\frac{\phi(z)}{\rho_{\phi}(1)e^{-\frac{\pi}{2}t_{\phi}}\sqrt{y}}=\sum_{n\not=0}\lambda_{\phi}(n)e^{\frac{\pi}{2}t_{\phi}}\mathrm{K}_{it_{\phi}}(2\pi|n|y)e(nx).
\]

\begin{proposition}\label{p:approx} Let $\Delta=t_{\phi}^{\frac{1}{3}}\log t_{\phi}$. Then, for any $c<y\leq\frac{t_{\phi}}{2\pi}$ with any fixed $c>0$., we have
$(1)$ For $X\ge 1$ and $k=1,2$,
\[
\sum_{|n|\le X}\left|\lambda_{\phi}(n)\right|^{2k}\ll_{\epsilon}Xt_{\phi}^{\epsilon}.
\]
$(2)$
\begin{equation*}\begin{split}
\int_{-\frac{1}{2}}^{\frac{1}{2}}\Phi(x+iy)^2dx=&2\pi\sum_{|n|\le\frac{t_{\phi}-\Delta}{2\pi y}}\frac{\left|\lambda_{\phi}(n)\right|^2}
{\left(t_{\phi}^2-(2\pi ny)^2\right)^{\frac{1}{2}}}
\sin^2\left(\frac{\pi}{4}+t_{\phi}\mathrm{H}\left(\frac{2\pi ny}{t_{\phi}}\right)\right)\\
&+O\left(t_{\phi}^{2\theta-\frac{2}{3}+\epsilon}\left(\frac{\Delta}{y}+1\right)\right).
\end{split}
\end{equation*}

\end{proposition}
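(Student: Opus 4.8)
I would begin with the moment bound. Since $\lambda_\phi(-n)=\lambda_\phi(n)$ and the $n=0$ term is absent, it suffices to estimate $\sum_{n\le X}|\lambda_\phi(n)|^{2k}$ for $k=1,2$. For $k=1$ this is the classical Rankin--Selberg estimate: the Dirichlet series $\sum_n\lambda_\phi(n)^2 n^{-s}=\zeta(s)L(s,\mathrm{sym}^2\phi)/\zeta(2s)$ has a simple pole at $s=1$ whose residue is $\ll_\epsilon t_\phi^\epsilon$ (using $L(1,\mathrm{sym}^2\phi)\ll_\epsilon t_\phi^\epsilon$), so Perron's formula with a contour shift gives $\sum_{n\le X}\lambda_\phi(n)^2\ll_\epsilon X t_\phi^\epsilon$ uniformly in $t_\phi$. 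For $k=2$ I would use Hecke multiplicativity to write $\lambda_\phi(n)^4$ as a Dirichlet convolution of symmetric-power Hecke eigenvalues, so that $\sum_n\lambda_\phi(n)^4 n^{-s}$ is an explicit product of $\zeta(s)$, $L(s,\mathrm{sym}^2\phi)$ and $L(s,\mathrm{sym}^4\phi)$ (times an absolutely convergent correction) with a pole of order two at $s=1$. The same contour argument, together with $L(1,\mathrm{sym}^j\phi)\ll_\epsilon t_\phi^\epsilon$, yields $\sum_{n\le X}\lambda_\phi(n)^4\ll_\epsilon X\log X\,t_\phi^\epsilon\ll_\epsilon X t_\phi^\epsilon$, since $X\le t_\phi^{O(1)}$ in the range of interest.

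\textbf{Part (2), setup.} The first step is to open the square by orthogonality. Because $\phi$ is real-valued and even, the coefficients $c_n:=\lambda_\phi(n)e^{\frac\pi2 t_\phi}\mathrm{K}_{it_\phi}(2\pi|n|y)$ are real and satisfy $c_{-n}=c_n$, so Parseval on $[-\tfrac12,\tfrac12]$ gives
\[
\int_{-1/2}^{1/2}\Phi(x+iy)^2\,dx=\sum_{n\neq0}c_n^2=\sum_{n\neq0}\lambda_\phi(n)^2e^{\pi t_\phi}\mathrm{K}_{it_\phi}(2\pi|n|y)^2,
\]
the exchange of sum and integral being justified by the exponential decay of $\mathrm{K}_{it_\phi}$ for large argument. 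I would then split the sum at $u=2\pi|n|y=t_\phi-\Delta$.

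\textbf{Part (2), main range and tail.} On the main range $2\pi|n|y<t_\phi-\Delta$ (which lies inside the regime $u<t_\phi-Ct_\phi^{1/3}$ of Lemma \ref{l:kbessel}(1), since $\Delta=t_\phi^{1/3}\log t_\phi>Ct_\phi^{1/3}$) I would insert the asymptotic of Lemma \ref{l:kbessel}(1) and square it, producing the summand $\tfrac{2\pi\lambda_\phi(n)^2}{(t_\phi^2-(2\pi ny)^2)^{1/2}}\sin^2(\tfrac\pi4+t_\phi\mathrm{H}(\tfrac{2\pi ny}{t_\phi}))$ times $1+O(1/(t_\phi\mathrm{H}))$; pairing $n$ with $-n$ turns $2\sum_{n\ge1}$ into the displayed $2\pi\sum_{|n|\le N}$ with $N=\lfloor(t_\phi-\Delta)/(2\pi y)\rfloor$. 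Bounding $\lambda_\phi(n)^2\ll_\epsilon t_\phi^{2\theta+\epsilon}$ (from \eqref{e:ks} and $n\ll t_\phi/y\ll t_\phi$) and replacing the residual sum of $\frac{1}{(t_\phi^2-u^2)^{1/2}t_\phi\mathrm{H}(u/t_\phi)}$ by an integral (substituting $u=t_\phi\xi$ and using $\mathrm{H}(\xi)\sim\frac{2\sqrt2}{3}(1-\xi)^{3/2}$, $(1-\xi^2)^{1/2}\sim\sqrt2(1-\xi)^{1/2}$ near $\xi=1$), I expect the error to be $\ll t_\phi^{2\theta+\epsilon}/(y\Delta)\ll t_\phi^{2\theta-2/3+\epsilon}(\Delta/y)$, within the claimed bound. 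For the complementary range $2\pi|n|y\ge t_\phi-\Delta$ I would use Lemma \ref{l:kbessel}(3): on the transition window $|u-t_\phi|\le Ct_\phi^{1/3}$ the square of $e^{\frac\pi2 t_\phi}\mathrm{K}_{it_\phi}$ is $\ll t_\phi^{-2/3}$ over $O(t_\phi^{1/3}/y)$ terms; on $t_\phi-\Delta\le u\le t_\phi-Ct_\phi^{1/3}$ the bound $\frac{1}{(t_\phi^2-u^2)^{1/2}}\ll t_\phi^{-2/3}$ holds over $O(\Delta/y)$ terms; and for $u>t_\phi+Ct_\phi^{1/3}$ the exponential factor $e^{-2t_\phi\mathrm{H}(u/t_\phi)}$ makes the contribution negligible. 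Each piece is $\ll t_\phi^{2\theta-2/3+\epsilon}(\Delta/y+1)$, the ``$+1$'' absorbing the case when $y$ is near $t_\phi/(2\pi)$ and the main sum is short.

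\textbf{Main obstacle.} The delicate point is the bookkeeping near the turning point $u\approx t_\phi$. There the relative error $1/(t_\phi\mathrm{H}(u/t_\phi))$ in Lemma \ref{l:kbessel}(1) is only $O((\log t_\phi)^{-3/2})$ at the cutoff $u=t_\phi-\Delta$, rather than power-saving, so the choice $\Delta=t_\phi^{1/3}\log t_\phi$ is exactly what makes this error genuinely small throughout the main range while keeping the transition window thin enough that the $t_\phi^{-1/3}$ Bessel bound there still sums within $t_\phi^{2\theta-2/3+\epsilon}(\Delta/y+1)$. Reconciling these two competing constraints, and confirming that the lower end $\xi\to0$ of the error integral (where $\mathrm{H}(\xi)\to\infty$) contributes no more than the $\xi\to1$ end, is where the care lies; the remaining estimates are routine given Lemma \ref{l:kbessel} and part (1).
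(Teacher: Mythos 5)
Your argument is correct and is essentially the proof the paper relies on: the paper gives no proof of this proposition, citing \cite{GRS} instead, and the argument there is precisely your combination of the Rankin--Selberg/symmetric-power bounds for part (1) with Parseval plus the squared Bessel asymptotic of Lemma \ref{l:kbessel} and the turning-point bookkeeping for part (2). The only caveat is that for part (1) with $k=2$ the Dirichlet series has a double pole, so the bound $\ll_{\epsilon} X t_{\phi}^{\epsilon}$ silently uses $\log X\ll t_{\phi}^{\epsilon}$, which, as you note, holds in every range where the proposition is applied.
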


Set
\[
\widetilde{\rho}_{\phi}(n)=\rho_{\phi}(n)e^{-\frac{\pi}{2}t_{\phi}}.
\]

\begin{proposition}\label{p:rhobounds} For any fixed $\omega>0$,
\[
\sum_{10^{-5}\omega t_{\phi}\le|n|\le\omega t_{\phi}}\left|\widetilde{\rho}_{\phi}(n)\right|^2\ge\frac{1}{1000}\omega t_{\phi}
\]
for sufficiently large $t_{\phi}$.
\end{proposition}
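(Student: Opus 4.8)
The plan is to read the sum $\sum_{10^{-5}\omega t_{\phi}\le|n|\le\omega t_{\phi}}|\widetilde\rho_\phi(n)|^2$ off the $L^2$-mass of $\phi$ carried by a horocyclic strip, exploiting that the MacDonald--Bessel weight $\mathrm{K}_{it_\phi}(2\pi|n|y)$ localizes the frequency $n$ to the height $y\asymp t_\phi/(2\pi|n|)$. Using $\widetilde\rho_\phi(n)=\lambda_\phi(n)\widetilde\rho_\phi(1)$ from \eqref{e:lambda} together with $\widetilde\rho_\phi(-n)=\widetilde\rho_\phi(n)$, the claim is equivalent to a lower bound for $|\widetilde\rho_\phi(1)|^2\sum_{10^{-5}\omega t_\phi\le n\le\omega t_\phi}|\lambda_\phi(n)|^2$. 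First I would integrate the pointwise identity of Proposition \ref{p:approx}(2) over a finite height window $y\in[Y,2Y]$, obtaining
\[
\int_Y^{2Y}\!\!\int_{-1/2}^{1/2}\phi(x+iy)^2\,\frac{dx\,dy}{y^2}=|\widetilde\rho_\phi(1)|^2\sum_{n}|\lambda_\phi(n)|^2\,W_n+O\!\left(t_\phi^{2\theta-2/3+\epsilon}\right),
\]
where $W_n=2\pi\int_Y^{2Y}\bigl(t_\phi^2-(2\pi ny)^2\bigr)^{-1/2}\sin^2\!\bigl(\tfrac\pi4+t_\phi\mathrm{H}(\tfrac{2\pi ny}{t_\phi})\bigr)\,\tfrac{dy}{y}$, the sum being effectively truncated at $n\le t_\phi/(2\pi Y)$ since $\mathrm{K}_{it_\phi}(u)$ is exponentially small for $u>t_\phi$ by Lemma \ref{l:kbessel}(2),(3).

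The second step is to pin down the weight $W_n$. Because $t_\phi\mathrm{H}(2\pi ny/t_\phi)$ has large $y$-derivative the factor $\sin^2$ averages to $1/2$, and Lemma \ref{l:kbessel} gives $c_0/t_\phi\le W_n\le C_0/t_\phi$ with absolute constants $c_0,C_0$ uniformly for $n\le t_\phi/(4\pi Y)$ (the fully oscillatory range), while the turning-point band $t_\phi/(4\pi Y)\lesssim n\le t_\phi/(2\pi Y)$ contributes $W_n\ll t_\phi^{-1}$ by the transition estimate $e^{\frac{\pi}{2}t_\phi}\mathrm{K}_{it_\phi}\ll t_\phi^{-1/3}$ of Lemma \ref{l:kbessel}(3). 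Thus, up to the fixed constants $c_0,C_0$, the strip mass is comparable to $t_\phi^{-1}|\widetilde\rho_\phi(1)|^2\sum_{n\le t_\phi/(2\pi Y)}|\lambda_\phi(n)|^2$, converting a mass estimate into a partial-sum estimate for $|\lambda_\phi(n)|^2$ with clean implied constants.

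For the mass itself I would invoke quantum unique ergodicity for Hecke--Maass cusp forms (Lindenstrauss, with Soundararajan ruling out escape of mass): writing the strip integral as $\int_{\mathbb{X}}\phi^2\,N_{[Y,2Y]}\,\tfrac{dx\,dy}{y^2}$, where $N_{[Y,2Y]}(z)$ counts the $\Gamma$-translates of $z$ of height in $[Y,2Y]$, and approximating this bounded counting function by smooth test functions, QUE yields
\[
\int_Y^{2Y}\!\!\int_{-1/2}^{1/2}\phi^2\,\frac{dx\,dy}{y^2}=\frac{3}{\pi}\Bigl(\frac1Y-\frac1{2Y}\Bigr)(1+o(1))=\frac{3}{2\pi Y}(1+o(1)).
\]
Taking $Y=1/(2\pi\omega)$ produces the lower bound $|\widetilde\rho_\phi(1)|^2\sum_{n\le\omega t_\phi}|\lambda_\phi(n)|^2\ge (3\omega t_\phi/C_0)(1+o(1))$, while a window at height $Y'\asymp 10^5/(4\pi\omega)$ produces the matching upper bound $|\widetilde\rho_\phi(1)|^2\sum_{n\le 10^{-5}\omega t_\phi}|\lambda_\phi(n)|^2\ll 10^{-5}\omega t_\phi$. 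Subtracting, and recalling the factor $2$ from $\pm n$, the removed low-frequency part is a vanishingly small fraction of the main term; the enormous gap between the true size $\asymp\omega t_\phi$ and the claimed $\tfrac1{1000}\omega t_\phi$ then leaves the inequality with room to spare, which is also what allows the crude constants $c_0,C_0$ and the transition band to be absorbed.

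The hard part is precisely the lower bound on the \emph{localized} $L^2$-mass. A purely soft argument using only $\|\phi\|_2=1$ shows the mass is at least $1$ on the semi-infinite strip $\{\,0\le x<1,\ y\ge\sqrt3/2\,\}$ (which covers a fundamental domain), but the associated Bessel weight there carries an extra factor $\log(t_\phi/n)$ that over-weights low frequencies, and the uniform bound $\sum_{|n|\le X}|\lambda_\phi(n)|^2\ll_\epsilon Xt_\phi^{\epsilon}$ of Proposition \ref{p:approx}(1) is too lossy, by $t_\phi^{\epsilon}$, to then rule out the mass concentrating at small $n$. This is why a genuine equidistribution input is needed to force positive mass onto the target window: QUE as above, or equivalently a subconvex bound for the symmetric-square $L$-function $L(s,\mathrm{sym}^2\phi)$ in the $t_\phi$-aspect combined with the Rankin--Selberg relation $|\widetilde\rho_\phi(1)|^2\,\mathrm{Res}_{s=1}\sum_n|\lambda_\phi(n)|^2 n^{-s}\to 6/\pi^2$, which cancels the $t_\phi^{\epsilon}$ fluctuations in \eqref{e:rho}. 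The remaining technical nuisance, the Bessel transition range near $n=t_\phi/(2\pi Y)$, contributes only a negligible $\ll t_\phi^{-4/3}$ per frequency and is harmless.
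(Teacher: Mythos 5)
Your argument is essentially sound, but it takes a genuinely different --- and much heavier --- route than the source. The paper does not actually prove Proposition \ref{p:rhobounds}: it is quoted from Ghosh--Reznikov--Sarnak \cite{GRS} (p.~1531), where it is established by a self-contained, effective argument working directly with the Fourier expansion --- the $L^2$-normalization unfolded over cuspidal strips to produce a weighted identity for $\sum_n|\widetilde{\rho}_{\phi}(n)|^2$, a covering/positivity argument for the matching upper bound $\sum_{|n|\le X}|\widetilde{\rho}_{\phi}(n)|^2\ll X$ with an absolute constant, and the Hecke multiplicativity relations as the arithmetic input preventing the coefficient mass from concentrating outside the window; the crude constants $10^{-5}$ and $1/1000$ are the signature of that hands-on computation. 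You instead import the full arithmetic QUE theorem (Lindenstrauss together with Soundararajan). Granting that input, your surrounding analysis checks out: the reduction via Proposition \ref{p:approx}(2), the evaluation $W_n\asymp t_{\phi}^{-1}$ of the Bessel weights away from the turning point, the $t_{\phi}^{-4/3}$ control of the transition band, and the final subtraction all work, and the factor $10^{-5}$ separating the two windows makes the numerology close with enormous room even for very crude weight constants $c_0,C_0$. You also correctly diagnose why the soft route fails --- the $t_{\phi}^{\epsilon}$ loss in Proposition \ref{p:approx}(1) is fatal for a clean constant --- which is exactly the obstruction \cite{GRS} overcome arithmetically rather than by equidistribution.

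Two caveats on what your route costs. First, be precise about which input does what: Soundararajan's non-escape of mass would suffice for the \emph{upper} bound on $\sum_{|n|\le 10^{-5}\omega t_{\phi}}|\widetilde{\rho}_{\phi}(n)|^2$ (it caps the mass high in the cusp), but your \emph{lower} bound needs a lower bound for the $\mu_{\phi}$-measure of a fixed annulus $\{z:Y\le\I(z)\le 2Y\}$; for small $\omega$ (large $Y$) this is precisely the assertion that mass must enter the cusp region in proportion to its volume, which neither $\|\phi\|_2=1$ nor non-escape alone gives --- you genuinely need the full equidistribution statement there. Second, Lindenstrauss's theorem carries no rate, so your ``$t_{\phi}$ sufficiently large'' threshold is ineffective, whereas the quoted result and everything downstream of it in this paper is effective. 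Since the proposition is itself designed as an effective, elementary substitute for (restricted) equidistribution, deriving it \emph{from} QUE is logically admissible but inverts the architecture and weakens the conclusion.
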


We note that this proposition crucially relies on the QUE result of Lindenstrauss \cite{Lin} and Soundararajan \cite{Sound} .

\begin{thms}\label{t:GRS} If $\beta$ is a long enough but fixed compact subsegment in $\delta_1$, then for $\epsilon>0$
\[
1\ll_{\beta}\int_{\beta}\phi^2(z)\frac{dy}{y}\le\int_{\delta}\phi^2(z)\frac{dy}{y}\ll_{\epsilon}t_{\phi}^{\epsilon}.
\]
\end{thms}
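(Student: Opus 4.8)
The plan is to analyse all three pieces of $\delta$ through the Fourier--Bessel expansion of $\phi$, using the nonnegativity of $\phi^2$ to split each restricted integral into a diagonal main term and an oscillatory off-diagonal remainder. The middle inequality is immediate from $\beta\subset\delta_1\subset\delta$ and $\phi^2\ge0$, so everything reduces to the outer two bounds, which I would carry out on $\delta_1=\{iy\}$ first. There $x=0$, hence $e(nx)=1$ and $\phi(iy)=\sum_{n\neq0}\rho_\phi(n)y^{1/2}\mathrm{K}_{it_\phi}(2\pi|n|y)$; the analogous cuspidal ray $\delta_2$ is identical after replacing $e(nx)$ by $(-1)^n$, which is invisible to the diagonal, while the compact arc $\delta_3$ I would bound by $\ll_\epsilon t_\phi^\epsilon$ using a standard $L^2$-restriction estimate for a fixed compact geodesic. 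Squaring and integrating against $dy/y$ over $y\ge1$ yields
\[
\int_{\delta_1}\phi^2\,\frac{dy}{y}=\sum_{m,n\neq0}\rho_\phi(m)\rho_\phi(n)\int_1^\infty \mathrm{K}_{it_\phi}(2\pi|m|y)\,\mathrm{K}_{it_\phi}(2\pi|n|y)\,dy ,
\]
so the whole problem reduces to the diagonal $m=n$ and the off-diagonal $m\neq n$.

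For the upper bound I would extract the diagonal. By Lemma \ref{l:kbessel}, writing $A_n(y)=\sqrt{2\pi}\,(t_\phi^2-(2\pi ny)^2)^{-1/4}$, the factor $e^{\pi t_\phi/2}\mathrm{K}_{it_\phi}(2\pi ny)$ is oscillatory of amplitude $A_n(y)$ while $2\pi ny<t_\phi-Ct_\phi^{1/3}$ and decays exponentially once $2\pi ny>t_\phi+Ct_\phi^{1/3}$; since $y\ge1$ this truncates the effective range to $n<t_\phi/2\pi$. Replacing $\sin^2$ by its mean and integrating gives $\int_1^\infty \mathrm{K}_{it_\phi}(2\pi ny)^2\,dy\ll e^{-\pi t_\phi}/n$, so the diagonal is $\ll|\rho_\phi(1)|^2e^{-\pi t_\phi}\sum_{n<t_\phi/2\pi}\lambda_\phi(n)^2/n\ll_\epsilon t_\phi^\epsilon$ by partial summation from Proposition \ref{p:approx}(1) together with \eqref{e:rho}. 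The transition ranges $|2\pi ny-t_\phi|\le Ct_\phi^{1/3}$, where Lemma \ref{l:kbessel}(3) gives only $t_\phi^{-1/3}$, contribute an admissible error of the shape already seen in Proposition \ref{p:approx}(2) and governed by $\theta=7/64$ via \eqref{e:ks}.

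For the lower bound on the fixed compact $\beta=\{iy:A\le y\le B\}$ I would discard the off-diagonal and keep only the diagonal, which is a sum of nonnegative terms, retaining a single block $10^{-5}\omega t_\phi\le|n|\le\omega t_\phi$ with $\omega$ a fixed small constant making $2\pi nB$ bounded away from $t_\phi$. Then $(t_\phi^2-(2\pi ny)^2)^{1/2}\asymp t_\phi$ and the phase $t_\phi\mathrm{H}(2\pi ny/t_\phi)$ has derivative $\asymp t_\phi$, so $\int_A^B\sin^2\gg_\beta1$ and $\int_A^B \mathrm{K}_{it_\phi}(2\pi ny)^2\,dy\gg_\beta e^{-\pi t_\phi}/t_\phi$. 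Summing, the block diagonal is $\gg_\beta t_\phi^{-1}\sum_{\text{block}}|\widetilde\rho_\phi(n)|^2\gg_\beta t_\phi^{-1}\cdot\omega t_\phi\gg_\beta1$ by Proposition \ref{p:rhobounds}. The essential point is that the coefficient mass $\asymp\omega t_\phi$ exactly compensates the $t_\phi^{-1}$ Bessel amplitude, so no power of $t_\phi$ survives and no estimate for $L(1,\mathrm{sym}^2\phi)$ is needed; it then remains only to show the off-diagonal is $o(1)$.

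The main obstacle in both directions is the off-diagonal. After Lemma \ref{l:kbessel}(1) and linearizing the product of sines, the $m\neq n$ contribution is $\asymp\sum_{m\neq n}\rho_\phi(m)\rho_\phi(n)\int A_m A_n\cos\psi_-\,dy$ with $\psi_-(y)=t_\phi[\mathrm{H}(2\pi my/t_\phi)-\mathrm{H}(2\pi ny/t_\phi)]$, the conjugate phase $\psi_+$ oscillating at rate $\asymp t_\phi/y$ and hence being harmless after one integration by parts. The difference phase satisfies $\psi_-'(y)\asymp y|m^2-n^2|/t_\phi$ and has no stationary point, but it degenerates near the diagonal, where the first-derivative test alone loses a power of $t_\phi$; recovering $\ll_\epsilon t_\phi^\epsilon$ for the upper bound and genuine $o(1)$ for the lower bound appears to require cancellation among the Hecke eigenvalues, namely bounds for the shifted convolution sums $\sum_n\lambda_\phi(n)\lambda_\phi(n+h)$ uniform in $h$, together with a careful truncation at the transition locus $2\pi ny\approx t_\phi$. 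This is where I expect the real difficulty to lie; alternatively one might bypass it by viewing $\int_0^\infty\phi(iy)^2\frac{dy}{y}=2\int_{\delta_1}\phi^2\frac{dy}{y}$ as a geodesic period, expressing it through an $L$-value and reducing both bounds to standard unconditional estimates at the edge of the critical strip.
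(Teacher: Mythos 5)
There is a genuine gap, and it sits exactly where you locate it: the off-diagonal. First, for context, the paper does not prove this theorem at all; it is quoted from Ghosh--Reznikov--Sarnak \cite{GRS}, so the only fair comparison is with the arguments there (part of which the paper transcribes later, in the Claim inside the proof of Lemma \ref{l:l12}(2)). Your diagonal computations are correct on both sides: $\int_1^\infty e^{\pi t_\phi}\mathrm{K}_{it_\phi}(2\pi ny)^2\,dy\ll 1/n$ gives the $t_\phi^\epsilon$ upper bound for the diagonal via Proposition \ref{p:approx}(1) and \eqref{e:rho}, and the block $10^{-5}\omega t_\phi\le n\le\omega t_\phi$ together with Proposition \ref{p:rhobounds} gives the diagonal lower bound $\gg_\beta 1$. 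But the theorem is not the diagonal. For the lower bound, the first-derivative test you invoke gives, for $m,n\asymp t_\phi$ in the block, $\psi_-'(y)\asymp y|m-n|$ and amplitude $A_mA_n\asymp t_\phi^{-1}$, hence an off-diagonal contribution $\ll t_\phi^{-1}\sum_{h\ge1}h^{-1}\sum_m|\widetilde\rho_\phi(m)\widetilde\rho_\phi(m+h)|\ll t_\phi^{\epsilon}\log t_\phi$ --- this does not beat the main term $\gg 1$, let alone give $o(1)$. This is precisely why GRS smooth in $y$ with a bump $k(y)$, truncate to $n\ge N=t_\phi/\nu$, and use the positivity trick $I\ge 2I_1-I_2$ so that their Proposition 6.4 controls the off-diagonal with the extra savings $b^{-l}$; a sharp cutoff on $[A,B]$ with one integration by parts cannot close this.

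For the upper bound over all of $\delta$ the situation is no better. Your fallback of ``expressing the period through an $L$-value at the edge of the critical strip'' does not apply as stated: $\int_0^\infty\phi(iy)^2\,\frac{dy}{y}$ is, by Mellin--Plancherel, a weighted \emph{second moment} $\int_{\mathbb{R}}\left|L\left(\tfrac12+it,\phi\right)\right|^2\left|\gamma_\phi\left(\tfrac12+it\right)\right|^2dt$ whose weight concentrates like $(|t_\phi-t|+1)^{-1/2}t_\phi^{-1/2}$ near $t=\pm t_\phi$, i.e.\ exactly in the conductor-dropping range where the naive mean-value theorem for Dirichlet polynomials of length $t_\phi$ loses a power of $t_\phi$ (this is the same weight as $J_1(x,\delta)$ in Lemma \ref{l:J}); edge-of-strip input such as $L(1,\mathrm{sym}^2\phi)^{-1}\ll t_\phi^\epsilon$ enters only through the normalization \eqref{e:rho}. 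Equivalently, in your direct formulation, the shifted-convolution cancellation you flag as needed is genuinely needed and is the content of the quoted result. Finally, the arc $\delta_3$ cannot be dispatched by ``a standard $L^2$-restriction estimate'': the general Burq--G\'erard--Tzvetkov bound for a compact geodesic gives only $\ll t_\phi^{1/2}$, and the $t_\phi^\epsilon$ bound on $\delta_3$ is again an arithmetic theorem of \cite{GRS}, not a soft fact. So the skeleton of your reduction is sound, but every one of the three hard steps (off-diagonal for the lower bound, off-diagonal/second moment for the upper bound, and $\delta_3$) is left unproved.
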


For Propositions \ref{p:approx}, \ref{p:rhobounds} and Theorem \ref{t:GRS}, see \cite[p. 1529, p. 1541,  p. 1531 and p. 1520]{GRS}.

We introduce a modified theorem of Littlewood \cite[p. 336, Theorem 1 (i)]{Lit} below.

Let $f$ be a real valued continuous function on the interval $[a,b+\eta]$ where $0<a<b$ and $0<\eta<(b-a)/10^7$. Define $M_{\lambda}(f)$ by
\[
M_{\lambda}(f)=\left(\frac{1}{b-a}\int_{a}^{b}|f(y)|^{\lambda}dy\right)^{1/\lambda}\qquad(\lambda>0).
\]
Let $N(f)$ be the number of sign changes of $f$
on the interval $[a,b]$. Define $J(f,\eta)$ by
\[
J(f,\eta)=\frac{1}{b-a}\int_{a}^{b}\left|\int_0^{\eta}f(y+v)dv\right|dy.
\]

\begin{thms}[Littlewood]\label{t:lit} Let $f$ be a real valued continuous function on $[a,b+\eta]$ that satisfies
\[
M_1(f)\ge cM_2(f)\qquad(0<c\le1),
\]
where $\eta=\omega(b-a)/N$ with $\omega>0$ and $N>10^7(\omega+7)$. Assume
\[
J(f,\eta)<\frac{1}{16}c^3\eta M_2(f).
\]
Then, we have
\[
N(f)\ge\frac{c^2}{10(\omega+2)}N.
\]

\end{thms}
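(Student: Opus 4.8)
The plan is to run Littlewood's sign-change principle in contrapositive form. Writing $G(y):=\int_y^{y+\eta}f_1(u)\,du$, so that $(b-a)J(f_1,\eta)=\int_a^b|G(y)|\,dy$, the idea is that \emph{few} zeros of $f_1$ would force $|G(y)|$ to coincide with $\int_y^{y+\eta}|f_1(u)|\,du$ on most windows $[y,y+\eta]$, and would therefore make $J(f_1,\eta)$ comparable to $\eta\,M_1(f_1)$, which is large. Since the hypothesis supplies $J(f_1,\eta)<\tfrac1{16}c^3\eta M_2(f)$, this is impossible unless $f_1$ changes sign often. The condition $N>10^7(\omega+7)$ enters only to make $\eta=\omega(b-a)/N$ genuinely small relative to $b-a$, so that the sliding window is short and all endpoint and overlap corrections stay negligible against the main term.

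First I would check that replacing $f$ by $f_1=f-gM_2(f)$ is harmless. Because $M_2(f)$ is a constant and $|g|\le 2^{-5}c^2\le2^{-5}$, the triangle inequality gives $M_1(f_1)\ge M_1(f)-M_2(f)M_1(g)\ge(c-2^{-5}c^2)M_2(f)\ge\tfrac9{10}cM_2(f)$, while $M_2(f_1)\le M_2(f)(1+M_2(g))\le 2M_2(f)$. Thus $f_1$ retains $L^1$-mass of order $cM_2(f)$ and has $L^2$-average bounded by $M_2(f)$; the subtraction of $gM_2(f)$ merely removes a slowly varying bias coming from the application and has no effect on the estimate.

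Next I would set $n:=N(f_1)$ and let the at most $n$ sign-change points of $f_1$ on $[a,b+\eta]$ cut that interval into maximal pieces of constant sign. Call $y$ \emph{bad} if the window $[y,y+\eta]$ meets a sign-change point and \emph{good} otherwise; since a sign-change point $z$ forces $y\in[z-\eta,z]$, the bad set has measure at most $n\eta$. For good $y$ one has $|G(y)|=\int_y^{y+\eta}|f_1(u)|\,du$, so discarding the bad $y$ and applying Tonelli gives
\[
(b-a)J(f_1,\eta)\ge\int_a^b\int_y^{y+\eta}|f_1(u)|\,du\,dy-\int_{\mathrm{bad}}\int_y^{y+\eta}|f_1(u)|\,du\,dy.
\]
The first term equals $\int_0^\eta\int_{a+v}^{b+v}|f_1(u)|\,du\,dv$, which is $\eta(b-a)M_1(f_1)$ up to an endpoint error of order $\eta^{3/2}(b-a)^{1/2}M_2(f)$. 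The bad term I would estimate by Cauchy–Schwarz: bounding $\int_y^{y+\eta}|f_1|\le\eta^{1/2}(\int_y^{y+\eta}f_1^2)^{1/2}$, integrating over the bad set of measure $\le n\eta$, and using $\int_a^b\int_y^{y+\eta}f_1^2\,du\,dy\le\eta(b-a)M_2(f_1)^2\le4\eta(b-a)M_2(f)^2$, which produces a bound of order $n^{1/2}\eta^{3/2}(b-a)^{1/2}M_2(f)$.

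Dividing by $\eta(b-a)M_2(f)$ and inserting $J(f_1,\eta)/\eta<\tfrac1{16}c^3M_2(f)$ together with $M_1(f_1)\ge\tfrac9{10}cM_2(f)$ leaves, once the small endpoint term is absorbed, an inequality of the form $n^{1/2}(\eta/(b-a))^{1/2}\gg c$; squaring and using $\eta/(b-a)=\omega/N$ gives $n\gg c^2N/\omega$, which has exactly the shape of the claimed bound $N(f_1)\ge\frac{c^2}{10(\omega+2)}N$. I expect the main obstacle to be sharpening this last step to the stated constant: a crude Cauchy–Schwarz control of the bad windows loses a numerical factor, so to replace $\omega$ by $\omega+2$ and secure the factor $10$ one must estimate the bad-window contribution more carefully—exploiting that $f_1$ is small near its zeros, or averaging a genuine length-$\eta$ partition over its starting point rather than bounding the overlapping sliding windows directly—while verifying against $N>10^7(\omega+7)$ that every error term is indeed dominated by $c\,\eta(b-a)M_2(f)$.
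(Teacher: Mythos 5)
Your plan follows the same underlying principle as Littlewood's argument (few sign changes would prevent the cancellation needed to make $J(f_1,\eta)$ small), but the implementation is genuinely different from the paper's. The paper, following Littlewood verbatim, partitions $[a,b]$ into $N$ equal subintervals, lets $\mathcal{M}_1$ be the set of indices $m$ for which $f_1$ vanishes in the enlarged window $I_m$ of length $(\omega+1)(b-a)/N$, uses $N(f_1)\ge|\mathcal{M}_1|/(\omega+2)$ together with $J_m=J_m^*$ for $m\notin\mathcal{M}_1$, and forces $|\mathcal{M}_1|$ to be large via the level-set lemma ($|f_1|\ge\tfrac12 cM_2(f)$ on a set of measure $\ge\tfrac12 c^2(b-a)$). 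You replace the discrete partition by a continuous sliding window and the level-set lemma by a direct appeal to $M_1(f_1)$ plus Cauchy--Schwarz on the bad set. Where your scheme works it is cleaner, and your closing worry about the constant is actually misplaced: the crude bound $2n^{1/2}\eta^{3/2}(b-a)^{1/2}M_2(f)\ge\bigl(\tfrac{9}{10}c-\tfrac{1}{16}c^3\bigr)\eta(b-a)M_2(f)$ already yields $n\ge 0.17\,c^2N/\omega\ge c^2N/(10(\omega+2))$, so no sharpening of the bad-window estimate is needed.

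The genuine gap is elsewhere: your assertion that $N>10^7(\omega+7)$ makes ``all endpoint and overlap corrections stay negligible against the main term'' is false under the stated hypotheses. Your own bound for the endpoint error is $\eta^{3/2}(b-a)^{1/2}M_2(f)=(\omega/N)^{1/2}\,\eta(b-a)M_2(f)$, whereas the main term is only $\ge\tfrac{9}{10}c\,\eta(b-a)M_2(f)$; their ratio is $c^{-1}(\omega/N)^{1/2}$, which is not small because $c\in(0,1]$ is arbitrary (and tends to $0$ in the paper's application, where $c\asymp t_\phi^{-\epsilon_1/2}$). Even restricting to the only regime where the conclusion is non-trivial, $c^2N\ge 10(\omega+2)$, one merely gets $c^{-1}(\omega/N)^{1/2}\le 10^{-1/2}$: the correction is comparable to, not negligible against, the main term, and a function whose $L^1$ mass concentrates near $y=a$ (permitted when $c$ is small) defeats your lower bound $\int_0^\eta\int_{a+v}^{b+v}|f_1|\,du\,dv\approx\eta(b-a)M_1(f_1)$. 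A second, related defect: windows with $y\in(b-\eta,b]$ reach into $(b,b+\eta]$, where nothing controls $f_1$ in terms of $M_2(f)$ (the averages $M_\lambda$ are taken over $[a,b]$ only), so both your Tonelli identity and the bound $\int_a^b\int_y^{y+\eta}f_1^2\,du\,dy\le\eta(b-a)M_2(f_1)^2$ are unjustified as written; those $y$ must be excised. The argument is likely repairable with a more careful weighting of the boundary layers, but as proposed it does not close; Littlewood's discrete version avoids the problem precisely because the level-set lemma prevents the mass from hiding in an interval of length $O(\eta)$.
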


\begin{proof} We follow the proof in \cite[pp. 339-340]{Lit}. To do so, we need to have appropriate settings as follows. Define
\begin{equation*}\begin{split}
J^*(f,\eta)&=\frac{1}{b-a}\int_{a}^{b}\int_0^{\eta}\left|f(y+v)\right|dvdy;\\
J_m(f,\eta)&=\frac{1}{b-a}\int_{a+\frac{(b-a)m}{N}}^{a+\frac{(b-a)(m+1)}{N}}\left|\int_0^{\eta}f(y+v)dv\right|dy;\\
J_m^*(f,\eta)&=\frac{1}{b-a}\int_{a+\frac{(b-a)m}{N}}^{a+\frac{(b-a)(m+1)}{N}}\int_0^{\eta}\left|f(y+v)\right|dvdy.
\end{split}
\end{equation*}
Set
\begin{equation*}\begin{split}
I_m&=\left\{y:a+\frac{(b-a)m}{N}\le y\le a+\frac{(b-a)m}{N}+\frac{\omega(b-a)+(b-a)}{N}\right\};\\
\mathcal{M}_1&=\left\{1\le m\le N^*:f\text{ has a sign change in }I_m\right\};\\
\mathcal{M}_2&=\{m: m=1,2,\ldots,N^*\}\setminus\mathcal{M}_1,
\end{split}
\end{equation*}
where
\[
N^*=[N-\omega-1].
\]
Then, we have
\begin{equation*}\begin{split}
N(f)&\ge\frac{\left|\mathcal{M}_1\right|}{\omega+2};\\
J_{m_1}(f,\eta)&\le J_{m_1}^*(f,\eta)\qquad(m_1\in\mathcal{M}_1);\\
J_{m_2}(f,\eta)&=J_{m_2}^*(f,\eta)\qquad(m_2\in\mathcal{M}_2);\\
\sum_{m\in\mathcal{M}_2}J_m^*(f,\eta)&=\sum_{m\in\mathcal{M}_2}J_m(f,\eta)\le J(f,\eta)<\frac{c^3}{16}\eta M_2(f).
\end{split}
\end{equation*}

We state the following crucial lemma as in \cite[p. 340]{Lit}.

\begin{lemma} Let $H$ be the set in $(a,b)$ in which $|f|\ge\frac{1}{2}cM_2(f)$. Then, $|H|\ge\frac{b-a}{2}c^2$.
\end{lemma}

The proof of Theorem \ref{t:lit} is identical as in \cite[pp. 339-340]{Lit}. We omit the proof.
\end{proof}

\section{Proof of Theorem \ref{t:l4}}

By \eqref{e:rho}, \eqref{e:lambda} and \eqref{e:ks}, we have
\[
\rho_{\phi}(n)\ll_{\epsilon}e^{\frac{\pi}{2}t_{\phi}}n^{\epsilon+\frac{7}{64}}.
\]
Thus, we get
\begin{equation}\label{e:trun}
\sum_{|n|>t_{\phi}}\rho_{\phi}(n)e(nx)\mathrm{K}_{it_{\phi}}(2\pi |n|y)\ll e^{-t_{\phi}}\qquad\left(y\ge\frac{1}{2}\right).
\end{equation}
Thus, we have
\begin{equation}\label{e:phiu}
\frac{1}{\left(\rho_{\phi}(1)e^{-\frac{\pi}{2}t_{\phi}}\right)^4}\int_{\mathbb{X}}\phi^4(z)\frac{dxdy}{y^2}\ll1+\int_{-1/2}^{1/2}\int_{1/2}^{\infty}\left|\psi(z)\right|^4dydx,
\end{equation}
where
\[
\psi(z)=\sum_{1\le n<t_{\phi}}\lambda_{\phi}(n)e(nx)e^{\frac{\pi}{2}t_{\phi}}\mathrm{K}_{it_{\phi}}(2\pi ny).
\]
Let $l_{\epsilon}$ be a positive integer such that
\[
1-\epsilon(l_{\epsilon}+1)\ge\epsilon+\frac{1}{3}>1-\epsilon(l_{\epsilon}+2).
\]
Define
\begin{equation*}\begin{split}
\psi_0(z)=&\sum_{n<\epy}\lambda_{\phi}(n)e(nx)e^{\frac{\pi}{2}t_{\phi}}\mathrm{K}_{it_{\phi}}(2\pi ny),\\
\psi_{1l}(z)=&\sum_{\frac{t_{\phi}-t_{\phi}^{1-l\epsilon}}{2\pi y}\le n<\frac{t_{\phi}-t_{\phi}^{1-(l+1)\epsilon}}{2\pi y}}\lambda_{\phi}(n)e(nx)e^{\frac{\pi}{2}t_{\phi}}\mathrm{K}_{it_{\phi}}(2\pi ny)\qquad(1\le l\le l_{\epsilon}),\\
\psi_{1\epsilon}(z)=&\sum_{\frac{t_{\phi}-t_{\phi}^{1-(l_{\epsilon}+1)\epsilon}}{2\pi y}\le n<\frac{t_{\phi}-t_{\phi}^{\epsilon+\frac{1}{3}}}{2\pi y}}\lambda_{\phi}(n)e(nx)e^{\frac{\pi}{2}t_{\phi}}\mathrm{K}_{it_{\phi}}(2\pi ny),\\
\psi_2(z)=&\sum_{\frac{t_{\phi}-t_{\phi}^{\epsilon+\frac{1}{3}}}{2\pi y}\le n<\frac{t_{\phi}+t_{\phi}^{\epsilon+\frac{1}{3}}}{2\pi y}}\lambda_{\phi}(n)e(nx)e^{\frac{\pi}{2}t_{\phi}}\mathrm{K}_{it_{\phi}}(2\pi ny),\\
\psi_3(z)=&\sum_{\frac{t_{\phi}+t_{\phi}^{\epsilon+\frac{1}{3}}}{2\pi y}\le n<t_{\phi}}\lambda_{\phi}(n)e(nx)e^{\frac{\pi}{2}t_{\phi}}\mathrm{K}_{it_{\phi}}(2\pi ny).
\end{split}
\end{equation*}
Thus,
\[
\psi(z)=\psi_0(z)+\sum_{1\le l\le l_{\epsilon}}\psi_{1l}(z)+\psi_{1\epsilon}(z)+\psi_2(z)+\psi_3(z).
\]
Hence, by \eqref{e:rho} and \eqref{e:phiu}, it suffices to show that
\begin{equation}\label{e:ub}
\int_{-1/2}^{1/2}\int_{1/2}^{\infty}\left|f(z)\right|^4dydx\ll_{\epsilon} t_{\phi}^{10\epsilon}
\end{equation}
for $f(z)=\psi_0(z)$, $\psi_{1l}(z)$ $(1\le l\le l_{\epsilon})$, $\psi_{1\epsilon}(z)$, $\psi_2(z)$, $\psi_3(z)$.
Set
\begin{equation*}\begin{split}
\psi_{21}(z)=&\sum_{\frac{t_{\phi}-t_{\phi}^{\frac{1}{3}}}{2\pi y}\le n<\frac{t_{\phi}+t_{\phi}^{\frac{1}{3}}}{2\pi y}}\lambda_{\phi}(n)e(nx)e^{\frac{\pi}{2}t_{\phi}}\mathrm{K}_{it_{\phi}}(2\pi ny),\\
\psi_{22}(z)=&\psi_2(z)-\psi_{21}(z).
\end{split}
\end{equation*}
By Lemma \ref{l:kbessel} (3),
\[
\psi_{21}(z)\ll\sum_{\frac{t_{\phi}-t_{\phi}^{\frac{1}{3}}}{2\pi y}\le n<\frac{t_{\phi}+t_{\phi}^{\frac{1}{3}}}{2\pi y}}\left|\lambda_{\phi}(n)\right|\cdot\frac{1}{t_{\phi}^{\frac{1}{3}}}.
\]
By the H\"older inequality and Lemma \ref{l:kbessel} (1),
\begin{equation*}\begin{split}
\int_{1/2}^{\infty}\psi_{21}^4(z)\ll&\int_{1/2}^{\infty}\frac{1}{t_{\phi}^{\frac{4}{3}}}\Biggl(\sum_{\frac{t_{\phi}-t_{\phi}^{\frac{1}{3}}}{2\pi y}\le n<\frac{t_{\phi}+t_{\phi}^{\frac{1}{3}}}{2\pi y}}1^{\frac{4}{3}}\Biggr)^3\sum_{\frac{t_{\phi}-t_{\phi}^{\frac{1}{3}}}{2\pi y}\le n<\frac{t_{\phi}+t_{\phi}^{\frac{1}{3}}}{2\pi y}}\left|\lambda_{\phi}(n)\right|^4dy\\
\ll&\frac{1}{t_{\phi}^{\frac{1}{3}}}\sum_{n\le t_{\phi}}\left|\lambda_{\phi}(n)\right|^4\int_{\max\bigl(\frac{1}{2},\frac{t_{\phi}-t_{\phi}^{\frac{1}{3}}}{2\pi n}\bigr)}^{\frac{t_{\phi}+t_{\phi}^{\frac{1}{3}}}{2\pi n}}\frac{1}{y^3}dy\\
\ll&\frac{1}{t_{\phi}}\sum_{n\le t_{\phi}}\left|\lambda_{\phi}(n)\right|^4\\
\ll& t_{\phi}^{\epsilon};
\end{split}
\end{equation*}
Define
\[
E=\biggl\{n\in\mathbb{Z}:\frac{t_{\phi}-t_{\phi}^{\epsilon+\frac{1}{3}}}{2\pi y}\le n\le\frac{t_{\phi}-t_{\phi}^{\frac{1}{3}}}{2\pi y}\text{ or }
\frac{t_{\phi}+t_{\phi}^{\frac{1}{3}}}{2\pi y}\le n\le\frac{t_{\phi}+t_{\phi}^{\epsilon+\frac{1}{3}}}{2\pi y}\biggr\}.
\]
By Lemma \ref{l:kbessel} and the H\"older inequality,
\begin{equation*}\begin{split}
\int_{1/2}^{\infty}\psi_{22}(z)^4\ll&\int_{1/2}^{\infty}\Biggl(\sum_{n\in E}\frac{\left|\lambda_{\phi}(n)\right|}{\left|t_{\phi}^2-(2\pi ny)^2\right|^{\frac{1}{4}}}\Biggr)^4dy\\
\ll&\frac{1}{t_{\phi}t_{\phi}^{\frac{1}{3}}}\int_{1/2}^{\infty}\left(\sum_{n\in E}\left|\lambda_{\phi}(n)\right|\right)^4dy\\
\ll&\frac{1}{t_{\phi}t_{\phi}^{\frac{1}{3}}}\cdot t_{\phi}^{3\epsilon+1}\sum_{n\le t_{\phi}}\left|\lambda_{\phi}(n)\right|^4\frac{t_{\phi}^{\frac{1}{3}+\epsilon}}{n}\\
\ll& t_{\phi}^{5\epsilon}.
\end{split}
\end{equation*}
Thus, we prove \eqref{e:ub} for $f(z)=\psi_2(z)$. Also, similarly, \eqref{e:ub} can be justified for $f(z)=\psi_{1\epsilon}(z)$.

For convenience, define
\[
H(n,y)=t_{\phi}\textrm{H}\left(\frac{2\pi ny}{t_{\phi}}\right).
\]
By the facts that $\textrm{H}$ is decreasing in $(0,1)$ and increasing in $[1,\infty)$ and $\textrm{H}$ satisfies
\[
\textrm{H}(\xi)\asymp\left|\xi^2-1\right|^{\frac{3}{2}}\qquad\text{near }1,
\]
we have
\[
H(n,y)\gg t_{\phi}^{1-\frac{3}{2}\epsilon}.
\]
for any $2\pi ny<t_{\phi}-t_{\phi}^{1-\epsilon}$. From this and Lemma \ref{l:kbessel} (1), we have
\[
\psi_0(z)\ll\left|\psi_{01}(z)\right|+\left|\psi_{02}(z)\right|,
\]
where
\begin{equation*}\begin{split}
\psi_{01}(z)=&\sum_{n<\epy}\frac{\lambda_{\phi}(n)e(nx)}{\left(t_{\phi}^2-(2\pi ny)^2\right)^{\frac{1}{4}}}\sin\left[\frac{\pi}{4}+H(n,y)\right],\\
\psi_{02}(z)=&\sum_{n<\epy}\frac{\left|\lambda_{\phi}(n)\right|}{t_{\phi}^{\frac{1}{2}}t_{\phi}^{1-2\epsilon}}.
\end{split}
\end{equation*}
By Proposition \ref{p:approx} (1) together with the Cauchy–Schwarz inequality,
\begin{equation}\label{e:psi02}
\int_{1/2}^{\infty}\left|\psi_{02}(z)\right|^4dy\ll\int_{1/2}^{\infty}\left(t_{\phi}^{-1/2+3\epsilon} y^{-1}\right)^4dy\ll1.
\end{equation}
Set
\begin{equation*}\begin{split}
\psi_{01}^+(z)=&\sum_{n<\epy}\frac{\lambda_{\phi}(n)e(nx)}{\left(t_{\phi}^2-(2\pi ny)^2\right)^{\frac{1}{4}}}e^{iH(n,y)},\\
\psi_{01}^-(z)=&\sum_{n<\epy}\frac{\lambda_{\phi}(n)e(nx)}{\left(t_{\phi}^2-(2\pi ny)^2\right)^{\frac{1}{4}}}e^{-iH(n,y)}.
\end{split}
\end{equation*}
For $\mathbf{n}=(n_1,\ldots,n_4)$ with $2\pi n_jy<t_{\phi}$ and $y\ge1/2$ ($j=1,\ldots,4$), $\mathrm{D}(\mathbf{n},y)$ and $\mathrm{d}(\mathbf{n},y)$ denote
\[
\mathrm{D}(\mathbf{n},y)=H(n_1,y)+H(n_2,y)
-H(n_3,y)-H(n_4,y),
\]
\[
\mathrm{d}(\mathbf{n},y)=\frac{\partial}{\partial y}\left(\mathrm{D}(\mathbf{n},y)\right).
\]
Put
\[
\mathcal{N}=\biggl\{\mathbf{n}=(n_1,\ldots,n_4):n_1+n_2=n_3+n_4,n_1\not=n_3,n_4,1\le n_1,\ldots,n_4\le\frac{t_{\phi}-t_{\phi}^{1-\epsilon}}{\pi}\biggr\}.
\]
For $\mathbf{n}=(n_1,n_2,n_3,n_4)\in\mathcal{N}$, set
\begin{equation*}\begin{split}
\beta(\mathbf{n})=&\max\left\{\frac{t_{\phi}-t_{\phi}^{1-\epsilon}}{2\pi n_1},\cdots,\frac{t_{\phi}-t_{\phi}^{1-\epsilon}}{2\pi n_4}\right\},\\
\beta_0(m,n)=&\max\left\{\frac{t_{\phi}-t_{\phi}^{1-\epsilon}}{2\pi m},\frac{t_{\phi}-t_{\phi}^{1-\epsilon}}{2\pi n}\right\},\\
Q(\mathbf{n},y)=&\frac{1}{\left(t_{\phi}^2-(2\pi n_1y)^2\right)^{\frac{1}{4}}}\cdots\frac{1}{\left(t_{\phi}^2-(2\pi n_4y)^2\right)^{\frac{1}{4}}}\\
\mathrm{I}(m,n)=&\int_{1/2}^{\beta_0(m,n)}\frac{1}{\left(t_{\phi}^2-(2\pi my)^2\right)^{\frac{1}{2}}\left(t_{\phi}^2-(2\pi ny)^2\right)^{\frac{1}{2}}}dy\\
\mathrm{I}(\mathbf{n})=&\int_{1/2}^{\beta(\mathbf{n})}Q(\mathbf{n},y)e^{i\mathrm{D}(\mathbf{n},y)}dy.
\end{split}
\end{equation*}
We have
\begin{equation}\label{e:psi+01}
\int_{-1/2}^{1/2}\int_{1/2}^{\infty}\left|\psi_{01}(z)\right|^4dydx\ll\int_{-1/2}^{1/2}\int_{1/2}^{\infty}\left|\psi_{01}^+(z)\right|^4+\left|\psi_{01}^-(z)\right|^4dydx
\ll\mathrm{I}_0+\mathrm{I}_1,
\end{equation}
\begin{equation*}\begin{split}
\mathrm{I}_0=&\sum_{k<\frac{2\left(t_{\phi}-t_{\phi}^{1-\epsilon}\right)}{\pi}}\sum_{\substack{m+n=k\\m,n\ge1}}\lambda_{\phi}(m)^2\lambda_{\phi}(n)^2\mathrm{I}(m,n),\\
\mathrm{I}_1=&\sum_{\mathbf{n}=(n_1,\ldots,n_4)\in\mathcal{N}}\lambda_{\phi}(n_1)\cdots\lambda_{\phi}(n_4)\mathrm{I}(\mathbf{n}).
\end{split}
\end{equation*}
For $m<n$,
\[
\mathrm{I}(m,n)\ll\frac{1}{t_{\phi}}\int_{1/2}^{\frac{t_{\phi}-t_{\phi}^{1-\epsilon}}{2\pi n}}\frac{dy}{\left(t_{\phi}-2\pi my\right)^{\frac{1}{2}}\left(t_{\phi}-2\pi ny\right)^{\frac{1}{2}}}\ll\frac{\log t_{\phi}}{nt_{\phi}}.
\]
By this and Proposition \ref{p:approx} (1),
\begin{equation}\label{e:I0}\begin{split}
\mathrm{I}_0\ll&\sum_{k<\frac{2\left(t_{\phi}-t_{\phi}^{1-\epsilon}\right)}{\pi}}\,\,\,\sum_{\substack{m\le n\\ m\le\frac{k}{2},\, m+n=k}}\lambda_{\phi}(m)^2\lambda_{\phi}(n)^2
\cdot\frac{\log t_{\phi}}{nt_{\phi}}\\
\ll&\frac{\log t_{\phi}}{t_{\phi}}\sum_{k<t_{\phi}}\sum_{\substack{m\le n\\ m\le\frac{k}{2},\, m+n=k}}\frac{\lambda_{\phi}(m)^4+\lambda_{\phi}(n)^4}{n}\\
\ll_{\epsilon}&\,t_{\phi}^{\epsilon}.
\end{split}
\end{equation}

\begin{lemma}\label{l:h} For $\mathbf{n}=(n_1,\ldots,n_4)\in\mathcal{N}$,

$(1)$
  \[
  \frac{d(\mathbf{n},y)}{n_3n_4-n_1n_2}>0;
  \]

$(2)$ For $2\pi n_1y,\ldots,2\pi n_4y<t_{\phi}-t_{\phi}^{1-\epsilon}$,
  \[
  d(\mathbf{n},y)\gg\frac{y\left|n_1n_2-n_3n_4\right|}{t_{\phi}};
  \]

$(3)$ For $2\pi n_1y,\ldots,2\pi n_4y<t_{\phi}$,
  \[
  \frac{\partial d(\mathbf{n},y)/\partial y}{d(\mathbf{n},y)}>0.
  \]
\end{lemma}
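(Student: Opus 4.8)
The plan is to reduce all three parts to the behaviour of $\mathrm{H}$ under differentiation, together with a single convexity/concavity comparison for two pairs of frequencies sharing a common sum. First I would record that, for $0<\xi\le1$, a direct differentiation of the definition gives $\mathrm{H}'(\xi)=-\sqrt{1-\xi^2}/\xi$, so that for one frequency $\tfrac{d}{dy}\,t_{\phi}\mathrm{H}\!\left(2\pi n y/t_{\phi}\right)=-\tfrac1y\sqrt{t_{\phi}^2-(2\pi n y)^2}$. Writing $g_j:=\sqrt{t_{\phi}^2-(2\pi n_j y)^2}$ throughout, this yields the clean formula
\[
d(\mathbf{n},y)=-\frac1y\bigl(g_1+g_2-g_3-g_4\bigr).
\]
The organizing principle for the whole lemma is the following one–variable fact. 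Since $n_1+n_2=n_3+n_4=:S$, the quantity $\Psi(n_1)+\Psi(n_2)-\Psi(n_3)-\Psi(n_4)$ compares two pairs with the same sum, and because $(a-b)^2=S^2-4ab$ the pair with the larger product lies closer to the centre $S/2$. Hence if $\Psi$ is concave the larger–product pair gives the larger value of $\Psi(\cdot)+\Psi(\cdot)$, while if $\Psi$ is convex it gives the smaller value.

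For part $(1)$ I would apply this with $\Psi=g$. One checks $g''(a)<0$ on the admissible range, so $g$ is concave; therefore $g_1+g_2-g_3-g_4$ carries the sign of $n_1n_2-n_3n_4$. Here $n_1n_2\ne n_3n_4$, for otherwise $\{n_1,n_2\}=\{n_3,n_4\}$, contradicting $n_1\ne n_3,n_4$. Consequently $d(\mathbf{n},y)=-\tfrac1y(g_1+g_2-g_3-g_4)$ has the sign of $n_3n_4-n_1n_2$, which is exactly $(1)$. For part $(2)$ I would make this quantitative by a difference–of–squares computation. Squaring and using $g_j^2=t_{\phi}^2-(2\pi n_jy)^2$ together with $n_1^2+n_2^2=S^2-2n_1n_2$ (and likewise for $n_3,n_4$) gives, with $P:=n_1n_2-n_3n_4$ and $c:=2\pi y$,
\[
(g_1+g_2)^2-(g_3+g_4)^2=2c^2P\left[1+\frac{2t_{\phi}^2+c^2(n_1n_2+n_3n_4)}{g_1g_2+g_3g_4}\right].
\]
Since each $g_j\le t_{\phi}$ we have $g_1g_2+g_3g_4\le 2t_{\phi}^2$, so the bracket is $\ge2$ and the left side is $\ge 4c^2|P|$ in absolute value. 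Dividing by $(g_1+g_2)+(g_3+g_4)\le 4t_{\phi}$ and using $|P|\ge1$ yields $|g_1+g_2-g_3-g_4|\ge (2\pi y)^2|P|/t_{\phi}$, whence $|d(\mathbf{n},y)|=\tfrac1y|g_1+g_2-g_3-g_4|\gg y\,|n_1n_2-n_3n_4|/t_{\phi}$, which is $(2)$.

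For part $(3)$ I would differentiate once more. From $g_j'=-(2\pi n_j)^2y/g_j$ and $(2\pi n_jy)^2=t_{\phi}^2-g_j^2$ one gets $y\,g_j'=g_j-t_{\phi}^2/g_j$, and a short computation starting from $d(\mathbf{n},y)=-(g_1+g_2-g_3-g_4)/y$ gives
\[
d'(\mathbf{n},y)=\frac{t_{\phi}^2}{y^2}\left(\frac1{g_1}+\frac1{g_2}-\frac1{g_3}-\frac1{g_4}\right).
\]
Now $a\mapsto 1/g(a)=(t_{\phi}^2-(2\pi a y)^2)^{-1/2}$ is convex, its second derivative being manifestly positive, so by the organizing principle $\tfrac1{g_1}+\tfrac1{g_2}-\tfrac1{g_3}-\tfrac1{g_4}$ has the sign of $n_3n_4-n_1n_2$, which is opposite to the sign of $g_1+g_2-g_3-g_4$. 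Hence $d'(\mathbf{n},y)$ and $d(\mathbf{n},y)$ carry the same sign, giving $d'(\mathbf{n},y)/d(\mathbf{n},y)>0$, which is $(3)$.

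All the computations are elementary; the points demanding care, and the only real obstacle, are the consistent tracking of signs across the three parts and the correct direction of the common–sum comparison (a one–variable convexity statement with $a+b=S$ fixed, applied to $g$ for parts $(1)$–$(2)$ and to $1/g$ for part $(3)$). The hypotheses $2\pi n_jy<t_{\phi}$ (respectively $<t_{\phi}-t_{\phi}^{1-\epsilon}$) enter only to keep every $g_j$ real and positive, so that the concavity of $g$, the convexity of $1/g$, and the difference–of–squares identity all remain valid throughout the relevant range of $y$.
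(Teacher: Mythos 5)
Your proof is correct. Part $(2)$ is essentially the computation the paper itself performs: it writes $h_1+h_2-h_3-h_4$ as $\bigl(h_1^2+h_2^2-h_3^2-h_4^2+2h_1h_2-2h_3h_4\bigr)/(h_1+h_2+h_3+h_4)$, evaluates $h_1^2+h_2^2-h_3^2-h_4^2=2(2\pi y)^2(n_1n_2-n_3n_4)$ and $h_1h_2-h_3h_4$ by a difference of squares using $n_1+n_2=n_3+n_4$, and arrives at the same lower bound $|h_1+h_2-h_3-h_4|\gg (2\pi y)^2|n_1n_2-n_3n_4|/t_{\phi}$; your version with the bracket $\ge 2$ is the same identity organized slightly differently. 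Where you genuinely diverge is in the sign determinations. The paper proves $(1)$ by reading off that each of the two explicit numerator terms carries the sign of $n_1n_2-n_3n_4$, and proves $(3)$ by a second, noticeably heavier algebraic identity for $\bigl(h_2h_3h_4+h_1h_3h_4\bigr)^2-\bigl(h_1h_2h_4+h_1h_2h_3\bigr)^2$ in terms of auxiliary quantities $H_1$ and $H_2$, each of which must again be shown to carry the sign of $n_3n_4-n_1n_2$. You replace both sign arguments by a single structural observation: for pairs with a fixed sum, $\Psi(n_1)+\Psi(n_2)-\Psi(n_3)-\Psi(n_4)$ carries the sign of $n_1n_2-n_3n_4$ when $\Psi$ is concave and the opposite sign when $\Psi$ is convex, applied to $\Psi=g$ for $(1)$ and to $\Psi=1/g$ for $(3)$ (the formula $d'=\frac{t_{\phi}^2}{y^2}(1/g_1+1/g_2-1/g_3-1/g_4)$ agrees with the paper's). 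This buys a much shorter and more conceptual proof of $(3)$ and makes transparent why $d$ and $d'$ have the same sign; the paper's brute-force identities buy nothing extra for $(1)$ and $(3)$, though its explicit formula \eqref{e:ch} is what is reused verbatim in Lemma \ref{l:gh}, so if one adopts your argument one should still keep the quantitative identity from part $(2)$, which you do. Your observation that $n_1n_2\ne n_3n_4$ (forced by $n_1\ne n_3,n_4$ together with the equal sums) is needed for strict positivity and is left implicit in the paper.
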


\begin{proof}[Proof of Lemma \ref{l:h}] (1) and (2). Observe
\[
-d(\mathbf{n},y)=\frac{h_1+h_2-h_3-h_4}{y},
\]
where
\[
h_j=\sqrt{t_{\phi}^2-\left(2\pi n_jy\right)^2}\qquad(j=1,\ldots,4).
\]
Using $n_1+n_2=n_3+n_4$ and so $n_3^2+n_4^2=n_1^2+n_2^2+2n_1n_2-2n_3n_4$, we get
\begin{equation*}\begin{split}
h_1+h_2-h_3-h_4=&\frac{h_1^2+h_2^2-h_3^2-h_4^2+2h_1h_2-2h_3h_4}{h_1+h_2+h_3+h_4},\\
\frac{h_1^2+h_2^2-h_3^2-h_4^2}{(2\pi y)^2(n_1n_2-n_3n_4)}=&\frac{n_3^2+n_4^2-n_1^2-n_2^2}{n_1n_2-n_3n_4}=2,\\
\frac{h_1h_2-h_3h_4}{(2\pi y)^2(n_1n_2-n_3n_4)}=&\frac{t_{\phi}^2\left(n_3^2+n_4^2-n_1^2-n_2^2\right)+(2\pi y)^2(n_1^2n_2^2-n_3^2n_4^2)}{(n_1n_2-n_3n_4)(h_1h_2+h_3h_4)}>0,
\end{split}
\end{equation*}
especially
\begin{equation}\label{e:ch}
\left|h_1+h_2-h_3-h_4\right|\ge\frac{4(2\pi y)^2\left|n_1n_2-n_3n_4\right|t_{\phi}^2}{\left(h_1h_2+h_3h_4\right)\left(h_1+h_2+h_3+h_4\right)}.
\end{equation}

From this, (1) and (2) follow.

(3) We have
\[
\frac{\partial}{\partial y}\left(d(\mathbf{n},y)\right)=\frac{t_{\phi}^2}{y^2}\left(\frac{1}{h_1}+\frac{1}{h_2}-\frac{1}{h_3}-\frac{1}{h_4}\right),
\]
\[
\left(h_2h_3h_4+h_1h_3h_4\right)^2-\left(h_1h_2h_4+h_1h_2h_3\right)^2=H_1+\frac{2h_1h_2h_3h_4}{h_3h_4+h_1h_2}H_2,
\]
where
\[
H_1=h_2^2h_3^2h_4^2+h_1^2h_3^2h_4^2-h_1^2h_2^2h_4^2-h_1^2h_2^2h_3^2,
\qquad
H_2=h_3^2h_4^2-h_1^2h_2^2.
\]
Using $n_1+n_2=n_3+n_4$ and so $n_3^2+n_4^2=n_1^2+n_2^2+2n_1n_2-2n_3n_4$, we obtain
\[
H_2=\left(n_3n_4-n_1n_2\right)(2\pi y)^2\left(2t_{\phi}^2+(2\pi y)^2(n_3n_4+n_1n_2)\right),
\]
\begin{multline*}
H_1=(2\pi y)^2\left(n_3n_4-n_1n_2\right) \biggl[2\left(t_{\phi}^4-(2\pi n_1y)^2(2\pi n_2y)^2\right)
\\+\left(n_1n_2+n_3n_4\right)(2\pi y)^2\left(2t_{\phi}^2-(2\pi y)^2\left(n_1^2+n_2^2\right)\right)\biggr].
\end{multline*}
Thus,
\[
\frac{\partial d(\mathbf{n},y)/\partial y}{n_3n_4-n_1n_2}>0.
\]
Since $d(\mathbf{n},y)/(n_3n_4-n_1n_2)>0$, (3) follows.
\end{proof}

We have
\[
\mathrm{I}(\mathbf{n})=\mathrm{I}_1(\mathbf{n},y)\big|_{1/2}^{\beta(\mathbf{n})}-\int_{1/2}^{\beta(\mathbf{n})}\mathrm{I}_2(\mathbf{n},y)dy,
\]
where
\begin{equation*}\begin{split}
\mathrm{I}_1(\mathbf{n},y)=\frac{Q(\mathbf{n},y)}{id(\mathbf{n},y)}e^{i\mathrm{D}(\mathbf{n},y)},\qquad
\mathrm{I}_2(\mathbf{n},y)=\frac{\partial Q(\mathbf{n},y)/\partial y \cdot d(\mathbf{n},y)-Q(\mathbf{n},y)\cdot \partial d(\mathbf{n},y)/\partial y}{id(\mathbf{n},y)^2}e^{i\mathrm{D}(\mathbf{n},y)}.
\end{split}
\end{equation*}
By Lemma \ref{l:h} and the fact that
\[
Q(\mathbf{n},y)\ll\frac{1}{t_{\phi}^{2-\epsilon}},
\]
\[
\mathrm{I}_1(\mathbf{n},y)\ll\frac{1}{t_{\phi}^{2-\epsilon}}\cdot\frac{t_{\phi}}{y\left|n_1n_2-n_3n_4\right|}
\ll\frac{t_{\phi}^{\epsilon}}{y\left|n_1n_2-n_3n_4\right|t_{\phi}}.
\]
By Lemma \ref{l:h} and the facts that $Q(\mathbf{n},y),\,\partial Q(\mathbf{n},y)/\partial y>0$, $Q(\mathbf{n},y)\ll1/t_{\phi}^{2-\epsilon}$,
\begin{equation*}\begin{split}
\int_{1/2}^{\beta(\mathbf{n})}\mathrm{I}_2(\mathbf{n},y)dy\ll&\frac{n_3n_4-n_1n_2}{\left|n_1n_2-n_3n_4\right|}\int_{1/2}^{\beta(\mathbf{n})}2\frac{\partial Q(\mathbf{n},y)/\partial y}{d(\mathbf{n},y)}
-\frac{\partial}{\partial y}\left(\frac{Q(\mathbf{n},y)}{d(\mathbf{n},y)}\right)dy\\
\ll&\frac{t_{\phi}^{1+\epsilon}}{\left|n_1n_2-n_3n_4\right|}\int_{1/2}^{\beta(\mathbf{n})}\frac{\partial Q(\mathbf{n},y)/\partial y}{y}dy+\left|\frac{Q(\mathbf{n},1/2)}{d(\mathbf{n},1/2)}\right|
+\left|\frac{Q(\mathbf{n},\beta)}{d(\mathbf{n},\beta)}\right|\\
\ll&\frac{1}{\left|n_1n_2-n_3n_4\right|}\cdot\frac{1}{t_{\phi}^{1-2\epsilon}}+\frac{t_{\phi}^{1+\epsilon}}{\left|n_1n_2-n_3n_4\right|}
\int_{1/2}^{\beta(\mathbf{n})}\frac{Q(\mathbf{n},y)}{y^2}dy\\
\ll&\frac{1}{\left|n_1n_2-n_3n_4\right|}\cdot\frac{1}{t_{\phi}^{1-2\epsilon}}.
\end{split}
\end{equation*}
By these, we get
\[
\mathrm{I}(\mathbf{n})\ll\frac{1}{\left|n_1n_2-n_3n_4\right|}\cdot\frac{1}{t_{\phi}^{1-2\epsilon}},
\]
By this and the Cauchy-Schwartz inequality,
\[
\mathrm{I}_1\ll\frac{t_{\phi}^{2\epsilon}}{t_{\phi}}\sum_{\mathbf{n}=(n_1,\ldots,n_4)\in\mathcal{N}}
\frac{\left|\lambda_{\phi}(n_1)\cdots\lambda_{\phi}(n_4)\right|}{\left|n_1n_2-n_3n_4\right|}
\ll\mathrm{I}_{11}+\ldots+\mathrm{I}_{14},
\]
where
\[
\mathrm{I}_{1j}=\frac{t_{\phi}^{2\epsilon}}{t_{\phi}}\sum_{\mathbf{n}=(n_1,\ldots,n_4)\in\mathcal{N}}
\frac{\left|\lambda_{\phi}(n_j)\right|^4}{\left|n_1n_2-n_3n_4\right|}\qquad(j=1,\ldots,4).
\]
By Lemma \ref{l:kbessel} (1) and the fact that
\begin{equation*}\begin{split}
\left|n_1n_2-n_3n_4\right|=&\left|(n_a-n_3)(n_a-n_4)\right|=\left|(n_b-n_1)(n_b-n_2)\right|\qquad(a=1,2,\,b=3,4),
\end{split}
\end{equation*}
we can demonstrate that for $j=1,2$,
\begin{equation*}\begin{split}
\mathrm{I}_{1j}=&\frac{t_{\phi}^{2\epsilon}}{t_{\phi}}\sum_{\mathbf{n}=(n_1,\ldots,n_4)\in\mathcal{N}}
\frac{\left|\lambda_{\phi}(n_j)\right|^4}{\left|(n_j-n_3)(n_j-n_4)\right|}\\
\le&\frac{t_{\phi}^{2\epsilon}}{t_{\phi}}\sum_{n_j<t_{\phi}}\left|\lambda_{\phi}(n_j)\right|^4\sum_{\substack{n_3,n_4<t_{\phi}\\ n_3,n_4\not=n_j}}\frac{1}{\left|(n_j-n_3)(n_j-n_4)\right|}\\
\ll&\frac{t_{\phi}^{2\epsilon}}{t_{\phi}}\sum_{n_j<t_{\phi}}\left|\lambda_{\phi}(n_j)\right|^4(\log t_{\phi})^2\\
\ll&t_{\phi}^{3\epsilon}
\end{split}
\end{equation*}
and similarly,
\[
\mathrm{I}_{1j}\ll t_{\phi}^{3\epsilon}
\]
for $j=3,4$.
By this, \eqref{e:I0} and \eqref{e:psi+01},
\[
\int_{-1/2}^{1/2}\int_{1/2}^{\infty}\left|\psi_{01}(z)\right|^4dydx\ll_{\epsilon}t_{\phi}^{3\epsilon}.
\]
By this and \eqref{e:psi02}, we have
\[
\int_{-1/2}^{1/2}\int_{1/2}^{\infty}\left|\psi_{0}(z)\right|^4dydx\ll_{\epsilon}t_{\phi}^{3\epsilon}.
\]
Thus, we prove \eqref{e:ub} for $f(z)=\psi_0(z)$.

We now treat the remaining cases separately, due to the transitional behavior of $\mathrm{K}_{ir}(u)$ (see, for instance, Lemma \ref{l:kbessel}(3)).

Find a positive integer $K$ such that $\epsilon K>7$. Recall
\[
H(n,y)=t_{\phi}\mathrm{H}\left(\frac{2\pi ny}{t_{\phi}}\right).
\]
By Lemma \ref{l:km}, \eqref{e:ks} and the facts that
\[
H(n,y)^K\ge H\left(n,t_{\phi}-t_{\phi}^{\epsilon+\frac{1}{3}}\right)^K\gg\left(t_{\phi}\left(\frac{t_{\phi}^{\epsilon+\frac{1}{3}}}{t_{\phi}}\right)^{\frac{3}{2}}\right)^K
=t_{\phi}^{\frac{3}{2}\epsilon K}>t_{\phi}^7
\]
and
\[
\int_{1/2}^{\infty}\sum_{\substack{n_1+n_2=n_3+n_4\\ \frac{t_{\phi}-t_{\phi}^{1-l\epsilon}}{2\pi y}\le n_1,n_2,n_3,n_4<\frac{t_{\phi}-t_{\phi}^{1-(l+1)\epsilon}}{2\pi y}}}
\left(\frac{t_{\phi}^{\frac{8}{64}}}{t_{\phi}t_{\phi}^{\frac{1}{3}}}\right)^4dy\ll1,
\]
we have
\[
\int_{-1/2}^{1/2}\int_{1/2}^{\infty}\left|\psi_{1l}(z)\right|^4dydx\ll\int_{-1/2}^{1/2}\int_{1/2}^{\infty}\left|\widetilde{\psi}_{1l}(z)\right|^4dydx+1,
\]
where
\[
\widetilde{\psi}_{1l}(z)=\sum_{\frac{t_{\phi}-t_{\phi}^{1-l\epsilon}}{2\pi y}\le n<\frac{t_{\phi}-t_{\phi}^{1-(l+1)\epsilon}}{2\pi y}}
\frac{\lambda_{\phi}(n)e(nx)}{\left(t_{\phi}^2-(2\pi ny)^2\right)^{\frac{1}{4}}}\\
e^{iH(n,y)}
\sum_{k<K}\frac{a_k}{H(n,y)^k}.
\]
Put
\begin{equation*}\begin{split}
\mathcal{N}_{1l}=&\biggl\{(n_1,\ldots,n_4):n_1+n_2=n_3+n_4,n_1\not=n_3,n_4,1\le n_1,\ldots,n_4\le\frac{t_{\phi}-t_{\phi}^{1-(l+1)\epsilon}}{\pi}\biggr\}\\
\mathcal{N}_{2l}=&\bigg\{(m,n):1\le m,n\le\frac{t_{\phi}-t_{\phi}^{1-(l+1)\epsilon}}{\pi}\biggr\}.
\end{split}
\end{equation*}
For $\mathbf{n}=(n_1,\ldots,n_4)\in\mathcal{N}_{1l}$, $(m,n)\in\mathcal{N}_{2l}$ and $\mathbf{k}=(k_1,\ldots,k_4)$ with $k_1,\ldots,k_4<K$, set
\begin{equation*}\begin{split}
\alpha=&\max\biggl\{\frac{1}{2},\frac{t_{\phi}-t_{\phi}^{1-l\epsilon}}{2\pi n_1},\ldots,\frac{t_{\phi}-t_{\phi}^{1-l\epsilon}}{2\pi n_4}\biggr\},\,\,\,
\beta=\min\biggl\{\frac{t_{\phi}-t_{\phi}^{1-(l+1)\epsilon}}{2\pi n_1},\ldots,\frac{t_{\phi}-t_{\phi}^{1-(l+1)\epsilon}}{2\pi n_4}\biggr\},\\
\alpha_0=&\max\biggl\{\frac{1}{2},\frac{t_{\phi}-t_{\phi}^{1-l\epsilon}}{2\pi m},\frac{t_{\phi}-t_{\phi}^{1-l\epsilon}}{2\pi n}\biggr\},\,\,\,
\beta_0=\min\biggl\{\frac{t_{\phi}-t_{\phi}^{1-(l+1)\epsilon}}{2\pi m},\frac{t_{\phi}-t_{\phi}^{1-(l+1)\epsilon}}{2\pi n}\biggr\},
\end{split}
\end{equation*}
\[
R(\mathbf{n},\mathbf{k},y)=\frac{Q(\mathbf{n},y)}{H(n_1,y)^{k_1}\cdots H(n_4,y)^{k_4}},
\]
\[
\mathrm{I}(\mathbf{n},\mathbf{k})=\int_{\alpha}^{\beta}R(\mathbf{n},\mathbf{k},y)e^{iH(\mathbf{n},y)}dy,
\]
\[
\mathrm{I}(m,n,\mathbf{k})=\int_{\beta_0}^{\alpha_0}\frac{dy}{\left(t_{\phi}^2-(2\pi my)^2\right)^{\frac{1}{2}}\left(t_{\phi}^2-(2\pi ny)^2\right)^{\frac{1}{2}}
\cdot H(m,y)^{k_1+k_2}H(n,y)^{k_3+k_4}}.
\]
We have
\[
\int_{-1/2}^{1/2}\int_{1/2}^{\infty}\left|\widetilde{\psi}_{1l}(z)\right|^4dydx=\sum_{\substack{\mathbf{k}=(k_1,k_2,k_3,k_4)\\ k_1,\ldots,k_4<K}}\left(\mathrm{I}_{1l}(\mathbf{k})+\mathrm{I}_{2l}(\mathbf{k})\right),
\]
where
\begin{equation*}\begin{split}
\mathrm{I}_{1l}(\mathbf{k})=&\sum_{\mathbf{n}=(n_1,\ldots,n_4)\in\mathcal{N}_{1l}}\lambda_{\phi}(n_1)\cdots\lambda_{\phi}(n_4)\mathrm{I}(\mathbf{n},\mathbf{k})\\
\mathrm{I}_{2l}(\mathbf{k})=&\sum_{(m,n)\in\mathcal{N}_{2l}}\lambda_{\phi}(m)^2\lambda_{\phi}(n)^2\mathrm{I}(m,n,\mathbf{k}).
\end{split}
\end{equation*}

\begin{lemma}\label{l:gh} Let $\mathbf{n}\in\mathcal{N}_{1l}$ and $\alpha\le y\le\beta$.
  \begin{equation*}
  d(\mathbf{n},y)\gg\frac{y\left|n_1n_2-n_3n_4\right|t_{\phi}^{\frac{3}{2}l\epsilon}}{t_{\phi}},\leqno(1)
  \end{equation*}
  \[
  R(\mathbf{n},\mathbf{k},y)\ll\frac{1}{t_{\phi}^{2-(l+1)\epsilon}}.\leqno(2)
  \]
\end{lemma}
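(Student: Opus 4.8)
The plan is to exploit the fact that, on the range $\alpha\le y\le\beta$, each product $2\pi n_jy$ is confined to a much narrower window around $t_{\phi}$ than the crude one used in Lemma \ref{l:h}, and then to feed the resulting sharper two-sided estimates for $h_j=\sqrt{t_{\phi}^2-(2\pi n_jy)^2}$ into the identities already established in the proof of Lemma \ref{l:h}.

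First I would record the key range constraint. Since $\alpha$ is a maximum and $\beta$ a minimum over the four indices, the hypothesis $\alpha\le y\le\beta$ forces
\[
t_{\phi}-t_{\phi}^{1-l\epsilon}\le 2\pi n_jy\le t_{\phi}-t_{\phi}^{1-(l+1)\epsilon}\qquad(j=1,\ldots,4),
\]
equivalently $t_{\phi}^{1-(l+1)\epsilon}\le t_{\phi}-2\pi n_jy\le t_{\phi}^{1-l\epsilon}$. Writing $h_j^2=(t_{\phi}-2\pi n_jy)(t_{\phi}+2\pi n_jy)$ and using $t_{\phi}\le t_{\phi}+2\pi n_jy\le 2t_{\phi}$, this pins each $h_j$ between $t_{\phi}^{1-(l+1)\epsilon/2}$ and $t_{\phi}^{1-l\epsilon/2}$, up to constants. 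This two-sided bound is the only new input beyond Lemma \ref{l:h}.

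For part $(2)$ I would use only the lower bound $h_j\gg t_{\phi}^{1-(l+1)\epsilon/2}$. Then each factor satisfies $\left(t_{\phi}^2-(2\pi n_jy)^2\right)^{1/4}=h_j^{1/2}\gg t_{\phi}^{1/2-(l+1)\epsilon/4}$, and multiplying the four reciprocals gives $Q(\mathbf{n},y)\ll t_{\phi}^{-2+(l+1)\epsilon}$. It then remains to see that dividing by $H(n_1,y)^{k_1}\cdots H(n_4,y)^{k_4}$ cannot hurt: from $\mathrm{H}(\xi)\asymp|\xi^2-1|^{3/2}$ near $1$ together with $t_{\phi}-2\pi n_jy\ge t_{\phi}^{1-(l+1)\epsilon}$ one gets $H(n_j,y)\gg t_{\phi}^{1-\frac{3}{2}(l+1)\epsilon}$, which is $\ge 1$ for large $t_{\phi}$ because the choice of $l_{\epsilon}$ via $1-\epsilon(l_{\epsilon}+1)\ge\epsilon+\frac13$ gives $(l_{\epsilon}+1)\epsilon\le\frac23-\epsilon$ and hence the exponent $1-\frac{3}{2}(l+1)\epsilon\ge\frac{3}{2}\epsilon>0$. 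Thus $R(\mathbf{n},\mathbf{k},y)\le Q(\mathbf{n},y)\ll t_{\phi}^{-2+(l+1)\epsilon}$, proving $(2)$.

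For part $(1)$ I would instead use the upper bound $h_j\ll t_{\phi}^{1-l\epsilon/2}$, so that $h_1h_2+h_3h_4\ll t_{\phi}^{2-l\epsilon}$ and $h_1+h_2+h_3+h_4\ll t_{\phi}^{1-l\epsilon/2}$. Recalling from the proof of Lemma \ref{l:h} that $-d(\mathbf{n},y)=(h_1+h_2-h_3-h_4)/y$ together with the inequality \eqref{e:ch}, I substitute these two upper bounds into the denominator on the right of \eqref{e:ch}; that denominator is then $\ll t_{\phi}^{2-l\epsilon}\cdot t_{\phi}^{1-l\epsilon/2}=t_{\phi}^{3-\frac32 l\epsilon}$, while the $t_{\phi}^2$ and the $y^2$ in the numerator, against the single $y$ from $-d(\mathbf{n},y)=(\cdots)/y$, produce exactly
\[
d(\mathbf{n},y)\gg\frac{y\left|n_1n_2-n_3n_4\right|t_{\phi}^{\frac32 l\epsilon}}{t_{\phi}}.
\]
The argument is essentially bookkeeping, since the mechanism is identical to Lemma \ref{l:h}; the only points requiring care are verifying that the admissible range of $l$ keeps $H(n_j,y)\ge 1$ and that the four quarter-powers combine to the exact exponent $-2+(l+1)\epsilon$. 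Beyond tracking these exponents I anticipate no genuine obstacle.
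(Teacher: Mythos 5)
Your proposal is correct and follows essentially the same route as the paper: the paper's own (very terse) proof simply recalls the inequality \eqref{e:ch} from Lemma \ref{l:h} and declares that $(1)$ follows and $(2)$ is easy, and your argument supplies exactly the omitted bookkeeping --- the two-sided pinning $t_{\phi}^{1-(l+1)\epsilon}\le t_{\phi}-2\pi n_jy\le t_{\phi}^{1-l\epsilon}$ on $[\alpha,\beta]$, hence $h_j\asymp\sqrt{t_{\phi}(t_{\phi}-2\pi n_jy)}$, fed into \eqref{e:ch} for $(1)$ and into $Q$ together with the check $H(n_j,y)\gg t_{\phi}^{1-\frac{3}{2}(l+1)\epsilon}\ge1$ (valid by the definition of $l_{\epsilon}$) for $(2)$. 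No gaps.
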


\begin{proof}[Proof of Lemma \ref{l:gh}] As in the proof of Lemma \ref{l:h}, we recall \eqref{e:ch}
\[
\left|h_1+h_2-h_3-h_4\right|\ge\frac{(2\pi y)^2\left|n_1n_2-n_3n_4\right|t_{\phi}^2}{\left(h_1h_2+h_3h_4\right)\left(h_1+h_2+h_3+h_4\right)}.
\]
From this, (1) follows. (2) is easy.
\end{proof}

By Lemma \ref{l:gh} and the fact that $R(\mathbf{n},\mathbf{k},y)$ is increasing in $y$, as in the proof of the upper bound for $\mathrm{I}(\mathbf{n})$, we have
\[
\mathrm{I}(\mathbf{n},\mathbf{k})\ll\frac{t_{\phi}^{-\frac{l}{2}\epsilon}t_{\phi}^{\epsilon}}{t_{\phi}\left|n_1n_2-n_3n_4\right|}
\]
and then
\[
\mathrm{I}_{1l}(\mathbf{k})\ll t_{\phi}^{3\epsilon}.
\]
As in the proof of the upper bound for $\mathrm{I}_0$,
\[
\mathrm{I}_{2l}(\mathbf{k})\ll\sum_{l<t_{\phi}}\sum_{\substack{m<n\\ m<\frac{l}{2}}}\lambda_{\phi}(m)^2\lambda_{\phi}(n)^2\cdot\frac{\log t_{\phi}}{nt_{\phi}}\ll t_{\phi}^{2\epsilon}.
\]
From these, we get
\[
\int_{-1/2}^{1/2}\int_{1/2}^{\infty}\left|\psi_{1l}(z)\right|^4dydx\ll t_{\phi}^{10\epsilon}\qquad(1\le l\le l_{\epsilon}).
\]
Thus, we prove \eqref{e:ub} for $f(z)=\psi_{1l}(z)$ ($1\le l\le l_{\epsilon}$).

For $\psi_{3}(z)$, we recall
\[
\mathrm{H}(\xi)=\sqrt{\xi^2-1}-\mathrm{arcsec}(\xi)\qquad(\xi>1)
\]
and by Lemma \ref{l:kbessel} (2)
\[
\mathrm{K}_{ir}(u)\ll\frac{e^{-t_{\phi}\mathrm{H}\left(\frac{u}{r}\right)}}{\left(u^2-r^2\right)^{\frac{1}{4}}}\qquad(u\ge r+r^{\epsilon+\frac{1}{3}}).
\]
Applying these, we obtain
\[
\int_{-1/2}^{1/2}\int_{1/2}^{\infty}\left|\psi_3(z)\right|^4dydx\ll t_{\phi}^{\epsilon}.
\]
Thus, we prove \eqref{e:ub} for $f(z)=\psi_3(z)$.

We have completed the proof of Theorem \ref{t:l4}.

\section{Proof of Theorem \ref{t:main2}}

Set
\[
\varphi_x(y)=\psi_y(x)=\sum_{n\not=0}\lambda_{\phi}(n)e(nx)e^{\frac{\pi}{2}t_{\phi}}\mathrm{K}_{it_{\phi}}(2\pi|n|y).
\]
Define
\begin{equation*}\begin{split}
J(\varphi_x,\eta)=&\frac{1}{h}\int_a^b\left|\int_0^{\eta}\varphi_x(y+\theta)d\theta\right|dy\qquad(a>0,\,h>1,\,b=a+h),\\
J(\psi_y,\eta)=&\int_{-1/2}^{1/2}\left|\int_0^{\eta}\psi_y(x+\theta)d\theta\right|dx.
\end{split}
\end{equation*}

\begin{lemma}\label{l:J} Let $0<\delta<1/100$ and $0<\delta_1<\delta/10^7$. 
Put
\[
L_x(s)=\sum_{1\le n\le t_{\phi}}\frac{\lambda_{\phi}(n)e(nx)}{n^s},
\]
\begin{equation*}\begin{split}
J_1(x,\delta)=&\frac{\log t_{\phi}}{\sqrt{t_{\phi}}}\int_{t_{\phi}^{1-\delta}}^{t_{\phi}+7\log t_{\phi}}\left|L_x\left(\frac{1}{2}+it\right)\right|^2\frac{dt}{\left(|t_{\phi}-t|+1\right)^{\frac{1}{2}}},\\
J_2(x,\delta)=&\frac{\log t_{\phi}}{t_{\phi}}\int_0^{t_{\phi}^{1-\delta}}\left|L_x\left(\frac{1}{2}+it\right)\right|^2dt.
\end{split}
\end{equation*}
Then, we have
\[
J\left(\varphi_x,\eta\right)^2\ll\frac{1}{t_{\phi}^2}+\frac{J_1(x,\delta)}{t_{\phi}^{2-2\delta}}+\eta^2J_2(x,\delta),
\]
\[
\int_{-\frac{1}{2}}^{\frac{1}{2}}J_1(x,\delta)+t_{\phi}^{\delta}J_2(x,\delta)dx\ll t_{\phi}^{\delta_1}.
\]
\end{lemma}

\begin{proof}[Proof of Lemma \ref{l:J}] By \eqref{e:trun},
\[
J\left(\varphi_x,\eta\right)^2\ll\frac{1}{t_{\phi}^2}+J\left(\widetilde{\varphi}_x,\eta\right)^2,
\]
where
\[
\widetilde{\varphi}_x(y)=\sum_{1\le n\le t_{\phi}}\lambda_{\phi}(n)e(nx)e^{\frac{\pi}{2}t_{\phi}}\mathrm{K}_{it_{\phi}}(2\pi|n|y).
\]
Using the Mellin transform formula of $\mathrm{K}_{it_{\phi}}(2\pi ny)$ \cite[10.32.13]{Ol}, we have
\[
\widetilde{\varphi}_x(y)=\frac{1}{2\pi i}\int_{\left(\frac{1}{2}\right)}L_x(s)\gamma_{\phi}(s)y^{-s}ds,
\]
where
\[
\gamma_{\phi}(s)=e^{\frac{\pi}{2}t_{\phi}}\Gamma\left(\frac{s+it_{\phi}}{2}\right)\Gamma\left(\frac{s-it_{\phi}}{2}\right)\pi^{-s}.
\]
By Stirling's formula,
\[
\gamma_{\phi}\left(\frac{1}{2}+it\right)\ll \begin{cases} e^{-\frac{\pi}{2}\left(|t|-t_{\phi}\right)}\cdot\frac{1}{\left(\left|t_{\phi}-t\right|+1\right)^{\frac{1}{4}}}\cdot\frac{1}{\left(\left|t_{\phi}+t\right|+1\right)^{\frac{1}{4}}}& |t|>t_{\phi}\\ 
\frac{1}{\left(\left|t_{\phi}-t\right|+1\right)^{\frac{1}{4}}}\cdot\frac{1}{\left(\left|t_{\phi}+t\right|+1\right)^{\frac{1}{4}}}& |t|\le t_{\phi}.
\end{cases}
\]
Thus, by this and Proposition \ref{p:approx} (1),
\begin{equation*}\begin{split}
\int_{|t|>t_{\phi}+7\log t_{\phi}}L_x\left(\frac{1}{2}+it\right)\gamma_{\phi}\left(\frac{1}{2}+it\right)y^{-\frac{1}{2}-it}dt
\ll&\int_{t_{\phi}+7\log t_{\phi}}^{\infty}\sum_{1\le n\le t_{\phi}}\frac{\left|\lambda_{\phi}(n)\right|}{\sqrt{n}}\frac{e^{-\frac{\pi}{2}\left(t-t_{\phi}\right)}}{\left(t^2-t_{\phi}^2\right)^{\frac{1}{4}}}dt\\
\ll&\sum_{1\le n\le t_{\phi}}\frac{\left|\lambda_{\phi}(n)\right|}{\sqrt{n}}
\frac{e^{-\frac{7\pi}{2}\log t_{\phi}}}{t_{\phi}^{\frac{1}{4}}}\\
\ll&\frac{1}{t_{\phi}^2}.
\end{split}
\end{equation*}
By this, we have
\[
J\left(\widetilde{\varphi}_x,\eta\right)^2\ll\frac{1}{t_{\phi}^2}+\int_a^b\left|j_{x,1}(y)\right|^2dy+\int_a^b\left|j_{x,1}(y+\eta)\right|^2dy
+\eta\int_0^{\eta}\int_a^b\left|j_{x,2}(y+\theta)\right|^2dyd\theta,
\]
where $h$ is fixed ($b=a+h$),
\begin{equation*}\begin{split}
j_{x,1}(y)=&\int_{t_{\phi}^{1-\delta}}^{t_{\phi}+7\log t_{\phi}}L_x\left(\frac{1}{2}+it\right)\gamma_{\phi}\left(\frac{1}{2}+it\right)\frac{y^{-it}}{\frac{1}{2}-it}dt,\\
j_{x,2}(y)=&\int_0^{t_{\phi}^{1-\delta}}L_x\left(\frac{1}{2}+it\right)\gamma_{\phi}\left(\frac{1}{2}+it\right)y^{-it}dt.
\end{split}
\end{equation*}
By the Cauchy-Schwartz inequality and Stirling's formula, letting
\[
Q(t)=L_x\left(\frac{1}{2}+it\right)\gamma_{\phi}\left(\frac{1}{2}+it\right)\bigg/\left(\frac{1}{2}-it\right),
\]
we have
\begin{equation*}\begin{split}
\int_a^b\left|j_{x,1}(y)\right|^2dy=&\int_{t_{\phi}^{1-\delta}\le t_1,t_2\le t_{\phi}+7\log t_{\phi}}
Q(t_1)Q(-t_2)\frac{b^{1+i(t_2-t_1)}-a^{1+i(t_2-t_1)}}{1+i(t_2-t_1)}dt_2dt_1\\
\ll&\frac{\log t_{\phi}}{t_{\phi}^{2-2\delta}\sqrt{t_{\phi}}}\int_{t_{\phi}^{1-\delta}}^{t_{\phi}+7\log t_{\phi}}\left|L_x\left(\frac{1}{2}+it\right)\right|^2\frac{dt}{\left(|t_{\phi}-t|+1\right)^{\frac{1}{2}}}\\
=&\frac{J_1(x,\delta)}{t_{\phi}^{2-2\delta}}
\end{split}
\end{equation*}
and similarly we obtain
\[
\int_a^b\left|j_{x,1}(y+\eta)\right|^2dy\ll\frac{J_1(x,\delta)}{t_{\phi}^{2-2\delta}},
\]
\[
\int_a^b\left|j_{x,2}(y)\right|^2dy\ll\frac{\log t_{\phi}}{t_{\phi}}\int_0^{t_{\phi}^{1-\delta}}\left|L_x\left(\frac{1}{2}+it\right)\right|^2dt=J_2(x,\delta).
\]
From these, we obtain
\[
J\left(\varphi_x,\eta\right)^2\ll\frac{1}{t_{\phi}^2}+\frac{J_1(x,\delta)}{t_{\phi}^{2-2\delta}}+\eta^2J_2(x,\delta).
\]
Applying Proposition \ref{p:approx}, we have
\begin{equation*}\begin{split}
\int_{-1/2}^{1/2}J_1(x,\delta)dx=&\frac{\log t_{\phi}}{\sqrt{t_{\phi}}}\int_{t_{\phi}^{1-\delta}}^{t_{\phi}+7\log t_{\phi}}\sum_{n<t_{\phi}}\frac{\lambda_{\phi}(n)^2}{n}\frac{1}{\sqrt{\left|t_{\phi}-t\right|+1}}dt\ll t_{\phi}^{\delta_1},\\
\int_{-1/2}^{1/2}J_2(x,\delta)dx=&\frac{\log t_{\phi}}{t_{\phi}}\int_0^{t_{\phi}^{1-\delta}}\sum_{n<t_{\phi}}\frac{\lambda_{\phi}(n)^2}{n}dt\ll t_{\phi}^{-\delta}t_{\phi}^{\delta_1}.
\end{split}
\end{equation*}
Thus, we have
\[
\int_{-1/2}^{1/2}J_1(x,\delta)+t_{\phi}^{\delta}J_2(x,\delta)dx\ll t_{\phi}^{\delta_1}.
\]

We have completed the proof of Lemma \ref{l:J}.
\end{proof}

\begin{lemma}\label{l:Jy} Let $0<\delta<1/2$ and $0<\delta_1<\delta/10^7$. For $y>0$ and $\eta>0$, we have
\[
J(\psi_y,\eta)\ll\frac{t_{\phi}^{\delta_1}\sqrt{\eta}}{\sqrt{t_{\phi}}}+\frac{t_{\phi}^{\delta_1}}{t_{\phi}}
\]
\end{lemma}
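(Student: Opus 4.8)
The plan is to pass to the horizontal Fourier expansion and exploit \emph{exact} orthogonality in $x$, so that (in contrast to Lemma~\ref{l:J}, where the $\delta$-splitting was forced by the oscillatory $y$-integral) no cancellation argument is needed. Throughout, $y$ ranges over the fixed compact interval occurring in Theorem~\ref{t:main2}, so in particular $y\ge\tfrac12$ and $y\ll 1$; I use the boundedness of $y$ to guarantee that the turning point $n_0=t_\phi/(2\pi y)$ is of order $t_\phi$. First I would truncate: by \eqref{e:trun} we may replace $\psi_y$ by $\widetilde\psi_y(x)=\sum_{1\le|n|\le t_\phi}\lambda_\phi(n)e(nx)e^{\frac{\pi}{2}t_\phi}\mathrm{K}_{it_\phi}(2\pi|n|y)$ at the cost of an error $\ll\eta\,e^{-t_\phi/2}\ll t_\phi^{-1}$ inside $J$, which is absorbed into the term $t_\phi^{\delta_1}/t_\phi$. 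Since $\psi_y$ has mean zero in $x$, the inner integral is unchanged under $\eta\mapsto\eta-\lfloor\eta\rfloor$, so I may assume $0<\eta\le 1$. Integrating termwise gives $\int_0^\eta\widetilde\psi_y(x+\theta)\,d\theta=\sum_{1\le|n|\le t_\phi}\lambda_\phi(n)e^{\frac{\pi}{2}t_\phi}\mathrm{K}_{it_\phi}(2\pi|n|y)\,\frac{e(n\eta)-1}{2\pi in}\,e(nx)$; applying Cauchy--Schwarz in $x$ and then Parseval yields
\[
J(\widetilde\psi_y,\eta)^2\le\int_{-1/2}^{1/2}\Big|\int_0^\eta\widetilde\psi_y(x+\theta)\,d\theta\Big|^2dx=\sum_{1\le|n|\le t_\phi}|\lambda_\phi(n)|^2\big(e^{\frac{\pi}{2}t_\phi}\mathrm{K}_{it_\phi}(2\pi|n|y)\big)^2\frac{\sin^2(\pi n\eta)}{\pi^2n^2}=:S,
\]
using that $\mathrm{K}_{it_\phi}$ is real on the positive axis.

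Then I would bound $S$ by splitting the $n$-range according to Lemma~\ref{l:kbessel}, using the elementary inequality $\frac{\sin^2(\pi n\eta)}{\pi^2n^2}\le\min\!\big(\eta^2,\tfrac{1}{\pi^2n^2}\big)\le\frac{\eta}{\pi n}$. In the transition range $|2\pi ny-t_\phi|\le Ct_\phi^{1/3}$ and above it I would invoke the uniform bound $e^{\frac{\pi}{2}t_\phi}\mathrm{K}_{it_\phi}(u)\ll t_\phi^{-1/3}$, which follows from Lemma~\ref{l:kbessel}(3) since the weight $(t_\phi^2-u^2)^{-1/4}$ attains its maximum $\asymp t_\phi^{-1/3}$ at the edge of the oscillatory range, together with the exponential decay of part~(2) above $n_0$; as the relevant $n$ satisfy $n\asymp n_0\asymp t_\phi$, the bound $\tfrac1{n^2}\ll t_\phi^{-2}$ and Proposition~\ref{p:approx}(1) (the interval having length $\asymp t_\phi^{1/3}$) make these ranges contribute $\ll t_\phi^{-7/3+\epsilon}\ll t_\phi^{\epsilon}/t_\phi^{2}$. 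In the bulk range $2\pi ny\le t_\phi-Ct_\phi^{1/3}$ I would use $(e^{\frac{\pi}{2}t_\phi}\mathrm{K}_{it_\phi}(2\pi ny))^2\ll(t_\phi^2-(2\pi ny)^2)^{-1/2}$; the part with $2\pi ny\le t_\phi/2$ is handled directly, since there $(t_\phi^2-(2\pi ny)^2)^{1/2}\asymp t_\phi$ and $\sum_{n}|\lambda_\phi(n)|^2/n\ll t_\phi^{\epsilon}$ by partial summation from Proposition~\ref{p:approx}(1), giving $\ll\eta\,t_\phi^{\epsilon}/t_\phi$.

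The main obstacle is the near-resonant range $t_\phi/2<2\pi ny\le t_\phi-Ct_\phi^{1/3}$, where the Bessel weight $(t_\phi^2-(2\pi ny)^2)^{-1/2}$ blows up. Here the saving is that, $y$ being bounded, every such $n$ is of size $\asymp t_\phi$, so that the factor $\frac1n\asymp t_\phi^{-1}$ is small. I would decompose dyadically in $v:=t_\phi-2\pi ny\asymp 2^{-j}t_\phi$ with $1\le 2^{j}\ll t_\phi^{2/3}$: a block contains $\asymp 2^{-j}t_\phi$ integers $n$, each contributing (after $\frac{\sin^2(\pi n\eta)}{\pi^2n^2}\le\frac{\eta}{\pi n}$, $\frac1n\asymp t_\phi^{-1}$ and $(t_\phi^2-(2\pi ny)^2)^{1/2}\asymp(vt_\phi)^{1/2}\asymp 2^{-j/2}t_\phi$) a weight $\asymp\eta\,2^{j/2}t_\phi^{-2}$, while $\sum_{\text{block}}|\lambda_\phi(n)|^2\ll 2^{-j}t_\phi\cdot t_\phi^{\epsilon}$ by Proposition~\ref{p:approx}(1); hence the block contributes $\ll\eta\,t_\phi^{\epsilon}\,2^{-j/2}/t_\phi$, and summing the geometric series in $j$ gives $\ll\eta\,t_\phi^{\epsilon}/t_\phi$. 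Collecting the three ranges yields $S\ll\eta\,t_\phi^{\epsilon}/t_\phi+t_\phi^{\epsilon}/t_\phi^{2}$; choosing $\epsilon<2\delta_1$ to absorb the second-moment losses into $t_\phi^{\delta_1}$ and taking square roots then gives the claim. I expect the only delicate point to be the bookkeeping of the dyadic blocks near the turning point, in particular verifying that each block contains $\gg 1$ integers (which holds because $2^{-j}t_\phi\ge t_\phi^{1/3}\gg1$) so that Proposition~\ref{p:approx}(1) applies with the block length as the main term.
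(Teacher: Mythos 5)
Your overall strategy --- truncate via \eqref{e:trun}, integrate termwise, apply Parseval in $x$ to reduce $J(\psi_y,\eta)^2$ to the diagonal sum $\sum|\lambda_\phi(n)|^2\bigl(e^{\frac{\pi}{2}t_\phi}\mathrm{K}_{it_\phi}(2\pi|n|y)\bigr)^2\min(\eta^2,n^{-2})$, and then split the $n$-range according to the Bessel asymptotics of Lemma \ref{l:kbessel} --- is exactly the paper's, and your treatment of the bulk range $2\pi ny\le t_\phi/2$ is correct and gives the main term $\eta t_\phi^{\epsilon}/t_\phi$.

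There is, however, a genuine gap in how you handle the ranges near the turning point. Twice you invoke Proposition \ref{p:approx}(1) as if it gave a short-interval bound $\sum_{N\le n\le N+H}|\lambda_\phi(n)|^2\ll H\,t_\phi^{\epsilon}$: once for the dyadic blocks of length $2^{-j}t_\phi$ in the near-resonant range, and once for the transition range of length $\asymp t_\phi^{1/3}$. Proposition \ref{p:approx}(1) only controls initial segments $\sum_{|n|\le X}$; for an interval $[N,N+H]$ with $N\asymp t_\phi$ and $H=o(t_\phi)$ it yields nothing better than the trivial $\ll t_\phi^{1+\epsilon}$, since the second-moment mass could a priori concentrate there. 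With only the trivial bound your dyadic blocks contribute $\ll\eta\,2^{j/2}t_\phi^{-1+\epsilon}$, which after summing to $2^{j}\asymp t_\phi^{2/3}$ gives $\eta\,t_\phi^{-2/3+\epsilon}$ --- far short of the required $\eta\,t_\phi^{2\delta_1-1}$ --- and similarly the transition range gives only $t_\phi^{-5/3+\epsilon}$ instead of $t_\phi^{-2+\epsilon}$. The repair is the device the paper actually uses: apply Cauchy--Schwarz \emph{inside} each short range, writing $\sum_{\text{block}}|\lambda_\phi(n)|^2\le(\#\text{block})^{1/2}\bigl(\sum_{n\le t_\phi}|\lambda_\phi(n)|^4\bigr)^{1/2}$ (or, equivalently, pairing $|\lambda_\phi(n)|^2$ against the explicit weight $(t_\phi-2\pi ny)^{-1/2}$ and using $\sum(t_\phi-2\pi ny)^{-1}\ll\log t_\phi$, as in the paper's bounds for $J_{2,y}$ and $J_{4,y}$); the fourth moment is then a full initial-segment sum to which Proposition \ref{p:approx}(1) with $k=2$ does apply. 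With this substitution each dyadic block contributes $\ll\eta\,t_\phi^{-1+\epsilon}$ and the transition range $\ll t_\phi^{-2+\epsilon}$, and your conclusion follows; note also that the paper dispenses with the dyadic decomposition entirely by using $\min(\eta^2,n^{-2})\le n^{-2}\asymp t_\phi^{-2}$ (rather than $\eta/n$) in the near-resonant range, pushing that whole contribution into the $\eta$-free term $t_\phi^{2\delta_1}/t_\phi^{2}$.
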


\begin{proof}[Proof of Lemma \ref{l:Jy}] We have
\begin{equation*}\begin{split}
J(\psi_y,\eta)^2\ll&\frac{1}{t_{\phi}^2}+\int_{-1/2}^{1/2}\Biggl|\sum_{1\le n\le t_{\phi}}\frac{\lambda_{\phi}(n)}{n}e(nx)\left(e(n\eta)-1\right)e^{\frac{\pi}{2}t_{\phi}}\mathrm{K}_{it_{\phi}}(2\pi ny)\Biggr|^2dx\\
\ll&\sum_{1\le n\le t_{\phi}}\frac{\lambda_{\phi}(n)^2}{n^2}\sin^2(\pi n\eta)e^{\pi t_{\phi}}\mathrm{K}_{it_{\phi}}(2\pi ny)^2.
\end{split}
\end{equation*}
Thus, by Lemma \ref{l:kbessel},
\[
J(\psi_y,\eta)^2\ll J_{1,y}+J_{2,y}+J_{3,y}+J_{4,y},
\]
where
\begin{equation*}\begin{split}
J_{1,y}=&\sum_{n<\frac{t_{\phi}}{4\pi y}}\frac{\lambda_{\phi}(n)^2}{n^2}\frac{1}{\left(t_{\phi}^2-(2\pi ny)^2\right)^{\frac{1}{2}}}\sin^2(\pi n\eta),\\
J_{2,y}=&\frac{1}{t_{\phi}^{2+\frac{1}{2}}}\sum_{\frac{t_{\phi}}{4\pi y}\le n<\frac{t_{\phi}-t_{\phi}^{\frac{1}{3}}}{2\pi y}}\frac{\lambda_{\phi}(n)^2}{\sqrt{t_{\phi}-2\pi ny}},\\
J_{3,y}=&\frac{1}{t_{\phi}^{2}}\sum_{\frac{t_{\phi}-t_{\phi}^{\frac{1}{3}}}{2\pi y}\le n<\frac{t_{\phi}+t_{\phi}^{\frac{1}{3}}}{2\pi y}}
\frac{\lambda_{\phi}(n)^2}{t_{\phi}^{\frac{2}{3}}},\\
J_{4,y}=&\frac{1}{t_{\phi}^{2+\frac{1}{2}}}\sum_{\frac{t_{\phi}-t_{\phi}^{\frac{1}{3}}}{2\pi y}\le n<t_{\phi}}\frac{\lambda_{\phi}(n)^2}{\sqrt{2\pi ny -t_{\phi}}}.
\end{split}
\end{equation*}
Applying Proposition \ref{p:approx} (1), the Cauchy-Schwartz inequality and \eqref{e:ks}, we have
\begin{equation*}\begin{split}
J_{1,y}\ll&\frac{\eta}{t_{\phi}}\sum_{n<\frac{t_{\phi}}{4\pi y}}\frac{\lambda_{\phi}(n)^2}{n}\ll\frac{\eta}{t_{\phi}}t_{\phi}^{\delta_1/3},\\
J_{2,y}^2\ll&\frac{1}{t_{\phi}^{4+1}}\log t_{\phi}\sum_{\frac{t_{\phi}}{4\pi y}\le n\frac{t_{\phi}-t_{\phi}^{\frac{1}{3}}}{2\pi y}}\lambda_{\phi}(n)^4\ll\frac{t_{\phi}^{\delta_1/3}}{t_{\phi}^4},\\
J_{3,y}\ll&\frac{1}{t_{\phi}^{2}}\frac{t_{\phi}^{\frac{1}{3}}t_{\phi}^{\frac{1}{4}}}{t_{\phi}^{\frac{2}{3}}}\ll\frac{1}{t_{\phi}^2},\\
J_{4,y}^2\ll&\frac{t_{\phi}^{\delta_1/3}}{t_{\phi}^4}.
\end{split}
\end{equation*}
From these, Lemma \ref{l:Jy} follows.
\end{proof}

\begin{lemma}\label{l:l12} Let $a>0$, $\epsilon>0$, $0<\epsilon_1<\epsilon/10^7$ and $0<\epsilon_2<\epsilon_1/10^7$.

\noindent $(1)$ For some $Y_k$ in $\mathbf{v}_k$, $1\le k<t_{\phi}^{1-\epsilon}$,
\[
t_{\phi}^{-(\epsilon_1-\epsilon_2)/2}\ll\int_{-\frac{1}{2}}^{\frac{1}{2}}\psi_{Y_k}(x)^2dx\ll t_{\phi}^{\epsilon_1}\left[\int_{-\frac{1}{2}}^{\frac{1}{2}}\left|\psi_{Y_k}(x)\right|dx\right]^2,
\]
except for $\left[t_{\phi}^{1-\epsilon-(\epsilon_1-2\epsilon_2)}\right]$ many $k$'s.

\noindent $(2)$ Let $\delta=c\epsilon$ $(c>1/2)$ and $0<\epsilon_3<\min\left\{\delta/10^7,\, \epsilon_2/10^7\right\}$. We can find $a>0$ and  $h>0$ such that for some $X_k$ in $\mathbf{h}_k$, $1\le k<t_{\phi}^{1-\epsilon}$,
\[
t_{\phi}^{-(\epsilon_1-\epsilon_2)/2}\ll\int_a^{a+h}\varphi_{X_k}(y)^2dy\ll t_{\phi}^{\epsilon_1}\left[\int_a^{a+h}\left|\varphi_{X_k}(y)\right|dy\right]^2,
\]
\[
J_1(X_k,\delta)+t_{\phi}^{\delta}J_1(X_k,\delta)\le t_{\phi}^{\epsilon_1+\epsilon_3}
\]
except for $\left[t_{\phi}^{1-\epsilon-(\epsilon_1-2\epsilon_2)}\right]$ many $k$'s, where $J_1(x,\delta)$ and $J_2(x,\delta)$ are in Lemma \ref{l:J}.
\end{lemma}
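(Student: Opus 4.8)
The plan is to deduce both parts from one selection principle applied to
\[
g_2(y)=\int_{-1/2}^{1/2}\psi_y(x)^2\,dx,\qquad g_4(y)=\int_{-1/2}^{1/2}\psi_y(x)^4\,dx,
\]
and, for part $(2)$, to the analogues $G_2(x)=\int_a^{a+h}\varphi_x(y)^2\,dy$ and $G_4(x)=\int_a^{a+h}\varphi_x(y)^4\,dy$. Write $\ell=t_\phi^{-(1-\epsilon)}$ for the common length of the $\mathbf v_k,\mathbf h_k$, set $\tau=t_\phi^{-(\epsilon_1-\epsilon_2)/2}$, and fix an auxiliary exponent $0<\epsilon_0<\tfrac23\epsilon_2$ (all appeals to Propositions \ref{p:approx}, \ref{p:rhobounds} and Theorem \ref{t:l4} are made with error exponent $\epsilon_0$). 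First I would record the interpolation inequality $\int_I f^2\le(\int_I|f|)^{2/3}(\int_I f^4)^{1/3}$, valid on any interval $I$, which cubes and rearranges to $\frac{\int_I f^2}{(\int_I|f|)^2}\le\frac{\int_I f^4}{(\int_I f^2)^2}$. Hence the upper bound demanded in the lemma is implied by the pointwise reverse-H\"older bound $g_4(Y_k)\le t_\phi^{\epsilon_1}g_2(Y_k)^2$ (resp.\ $G_4(X_k)\le t_\phi^{\epsilon_1}G_2(X_k)^2$), and the whole statement reduces to selecting, in almost every interval, a point at which $g_2\ge\tau$ and $g_4\le t_\phi^{\epsilon_1}g_2^2$.

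The two inputs are averaged estimates. For the lower bound, Parseval (the form being even) gives $g_2(y)=2\sum_{n\ge1}\lambda_\phi(n)^2 e^{\pi t_\phi}\mathrm K_{it_\phi}(2\pi ny)^2$, and Proposition \ref{p:approx}$(2)$ expands this into the positive main term $2\pi\sum_n \lambda_\phi(n)^2(t_\phi^2-(2\pi ny)^2)^{-1/2}\sin^2(\tfrac\pi4+t_\phi\mathrm H(\tfrac{2\pi ny}{t_\phi}))$ plus a negligible error. On a single $\mathbf v_k\subset[a,a+1]$ and for $n\asymp t_\phi$ with $\tfrac{2\pi ny}{t_\phi}$ bounded away from $0$ and $1$, the phase $2t_\phi\mathrm H$ has $y$-derivative $\asymp n\asymp t_\phi$, so it runs through $\gg t_\phi^{\epsilon}$ periods across $\mathbf v_k$; thus $\sin^2$ may be replaced by its mean $\tfrac12$ at the cost of an oscillatory error which, summed over $n$, is $O(t_\phi^{-1+\epsilon_0})$ and hence negligible against the main term. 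Combined with the lower bound $\sum_{n\asymp t_\phi}\lambda_\phi(n)^2\gg t_\phi^{1-\epsilon_0}$ from Proposition \ref{p:rhobounds} (for a suitable fixed $\omega$), this yields $A_k:=\int_{\mathbf v_k}g_2\,dy\gg\ell t_\phi^{-\epsilon_0}$ for \emph{every} $k$. For the upper bound, undoing the normalisation $\Phi=\phi/(\rho_\phi(1)e^{-\pi t_\phi/2}\sqrt y)$ and using \eqref{e:rho} together with Theorem \ref{t:l4} gives $\sum_k B_k=\int_a^{a+1}g_4\,dy\ll t_\phi^{\epsilon_0}$, where $B_k:=\int_{\mathbf v_k}g_4$.

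For the selection, fix $k$ and set $T_k=\{y\in\mathbf v_k:g_2(y)\ge\tau\}$. Since $\tau\ell\le A_k/2$ (because $(\epsilon_1-\epsilon_2)/2>\epsilon_0$), one gets $\int_{T_k}g_2\ge A_k-\tau\ell\ge A_k/2$, and Cauchy--Schwarz on $T_k$ (using $|T_k|\le\ell$) gives $\int_{T_k}g_2^2\ge(\int_{T_k}g_2)^2/|T_k|\ge A_k^2/(4\ell)$. On the subset of $T_k$ where reverse-H\"older fails, $g_2^2<g_4/t_\phi^{\epsilon_1}$, so its $g_2^2$-mass is at most $B_k/t_\phi^{\epsilon_1}$. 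Hence whenever $B_k<t_\phi^{\epsilon_1}A_k^2/(4\ell)$ there is a point $Y_k\in T_k$ with $g_4(Y_k)\le t_\phi^{\epsilon_1}g_2(Y_k)^2$, and this $Y_k$ meets both requirements (the lower bound because $g_2(Y_k)\ge\tau$). The exceptional $k$ are those with $B_k\ge t_\phi^{\epsilon_1}A_k^2/(4\ell)$; using $A_k\gg\ell t_\phi^{-\epsilon_0}$ and $\sum_kB_k\ll t_\phi^{\epsilon_0}$,
\[
\#\{\text{exceptional }k\}\le\frac{4\ell}{t_\phi^{\epsilon_1}}\sum_k\frac{B_k}{A_k^2}\ll\frac{4\ell}{t_\phi^{\epsilon_1}}\cdot\frac{t_\phi^{2\epsilon_0}}{\ell^2}\sum_kB_k\ll t_\phi^{\,1-\epsilon-\epsilon_1+3\epsilon_0},
\]
which is $\le[t_\phi^{1-\epsilon-(\epsilon_1-2\epsilon_2)}]$ by the choice $\epsilon_0<\tfrac23\epsilon_2$. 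This proves $(1)$.

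For $(2)$ the same scheme runs with $G_2,G_4$ and the intervals $\mathbf h_k$ in the $x$-variable: the $L^4$ bound is again Theorem \ref{t:l4} restricted to the strip $a\le y\le a+h$, while the $L^2$ lower bound passes through $\int_{\mathbf h_k}\Phi(x+iy)^2\,dx=\ell\,g_2(y)+(\text{off-diagonal error})$, the error being controlled by the decay and oscillation of $\mathrm K_{it_\phi}$; choosing $a$ and a fixed $h$ with $\int_a^{a+h}g_2\,dy\gg t_\phi^{-\epsilon_0}$ gives $\int_{\mathbf h_k}G_2\gg\ell t_\phi^{-\epsilon_0}$. In addition one must arrange the $J$-smallness at the \emph{same} $X_k$: Lemma \ref{l:J} gives $\int_{-1/2}^{1/2}(J_1(x,\delta)+t_\phi^{\delta}J_2(x,\delta))\,dx\ll t_\phi^{\delta_1}$, so Markov's inequality places the set where $J_1+t_\phi^{\delta}J_2>t_\phi^{\epsilon_1+\epsilon_3}$ inside at most $\ll t_\phi^{1-\epsilon+\delta_1-\epsilon_1-\epsilon_3}$ of the $\mathbf h_k$, within the allowed count once $\delta_1$ is small. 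I expect the main obstacle to be precisely this simultaneity: the selection of the previous paragraph produces a point carrying positive $G_2^2$-mass but not obviously a set of large Lebesgue measure, so forcing the chosen $X_k$ to also lie in the $J$-good set requires running the weighted selection against the measure $\int_{\cdot}G_2^2\,dx$ and checking that the $J$-bad set carries only a small fraction of that mass. Making $\epsilon_0,\delta_1,\epsilon_3$ sufficiently small relative to $\epsilon_2$, and exploiting the freedom in $a,h$, is what should keep all three exceptional counts below $[t_\phi^{1-\epsilon-(\epsilon_1-2\epsilon_2)}]$.
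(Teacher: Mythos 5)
Your part (1) is essentially the paper's argument. Both proofs rest on the same three inputs --- Theorem \ref{t:l4} for the averaged $L^4$ upper bound, Proposition \ref{p:approx} (2) together with Proposition \ref{p:rhobounds} and integration by parts for a per-interval $L^2$ lower bound, and a Chebyshev-type selection --- and your count of exceptional $k$'s closes with the same numerology. The only real difference is the selection device: the paper defines the exceptional set $\mathcal{S}_M$ by requiring $t_\phi^{\epsilon_2}+\int\psi_y^4\,dx\ge M[\int\psi_y^2\,dx]^2$ for \emph{all} $y\in\mathbf{v}_k$, so that the additive $t_\phi^{\epsilon_2}$ hands you the lower bound on $\int\psi_{Y_k}^2$ for free at any non-exceptional point, whereas you restrict to $T_k=\{g_2\ge\tau\}$ and run a weighted Chebyshev argument against the measure $g_2^2\,dy$. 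Both work.

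For part (2) there are two genuine gaps. First, the per-interval lower bound $\int_{\mathbf{h}_k}\bigl[\int_a^{a+h}\varphi_x(y)^2\,dy\bigr]^2dx\gg|\mathbf{h}_k|$ is the hardest ingredient of the lemma, and you only gesture at it (``the error being controlled by the decay and oscillation of $\mathrm{K}_{it_\phi}$''). There is no Parseval identity in the $x$-variable over a window of length $t_\phi^{-(1-\epsilon)}$, so one cannot reduce to a diagonal sum as in part (1); the paper imports the machinery of \cite[pp. 1549--1558]{GRS} (the mollifier $k(y)$, the truncation at $N=t_\phi/\nu$, and the off-diagonal bounds of Proposition 6.4 there for $G(m,n)$ with $|m-n|\ge1$) to extract a positive diagonal contribution after choosing $b=\sqrt{\nu}$, $l=100$, $\nu=a^{101/100}$; this is also precisely where the constants $a$ and $h$ get fixed, which your sketch leaves open. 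Second, you correctly identify the simultaneity problem --- your selection produces a single point carrying $G_2^2$-mass, not a set of large Lebesgue measure, so Markov applied to $\int J\,dx$ does not let you intersect with the $J$-good set --- but you leave it unresolved. The paper's fix is to fold $J(x,\delta)=J_1+t_\phi^{\delta}J_2$ directly into the defining inequality of the exceptional set $\mathcal{T}_M$, i.e.\ to require $t_\phi^{\epsilon_2}+\int\varphi_x^4\,dy+J(x,\delta)\ge M[\int\varphi_x^2\,dy]^2$ for all $x\in\mathbf{h}_k$; then any non-exceptional $X_k$ automatically satisfies $J(X_k,\delta)\le M[\int\varphi_{X_k}^2\,dy]^2\ll t_\phi^{\epsilon_1+\epsilon_3}$ by Theorem \ref{t:GRS}. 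You would need to do the analogous thing (add $J$ to your bad functional, or rerun the $J$-exceptional count against the $G_2^2$ measure, as you suggest but do not carry out) to make part (2) complete.
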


\begin{proof}[Proof of Lemma \ref{l:l12}] (1). Let $M>0$.
$\mathcal{S}_M$ denotes the set of all $\mathbf{v}_k$'s such that
\[
t_{\phi}^{\epsilon_2}+\int_{-\frac{1}{2}}^{\frac{1}{2}}\psi_y(x)^4dx\ge M\left[\int_{-\frac{1}{2}}^{\frac{1}{2}}\psi_y(x)^2dx\right]^2
\]
for all $y\in \mathbf{v}_k$.
By Theorem \ref{t:l4},
\begin{equation*}
M\sum_{\mathbf{v}_k\in\mathcal{S}_M}\int_{\mathbf{v}_k}\left[\int_{-\frac{1}{2}}^{\frac{1}{2}}\psi_y(x)^2dx\right]^2dy\le
\sum_{\mathbf{v}_k\in\mathcal{S}_M}\int_{\mathbf{v}_k}\int_{-\frac{1}{2}}^{\frac{1}{2}}\psi_y(x)^4dxdy+t_{\phi}^{\epsilon_2}\ll t_{\phi}^{\epsilon_2}.
\end{equation*}
Thus,
\begin{equation}\label{e:mless1}
M\sum_{\mathbf{v}_k\in\mathcal{S}_M}\int_{\mathbf{v}_k}\left[\int_{-\frac{1}{2}}^{\frac{1}{2}}\psi_y(x)^2dx\right]^2dy\ll t_{\phi}^{\epsilon_2}.
\end{equation}
Recall
\[
\Delta=t_{\phi}^{\frac{1}{3}}\log t_{\phi}\qquad \text{and}\qquad H(n,y)=t_{\phi}\mathrm{H}\left(\frac{2\pi ny}{t_{\phi}}\right).
\]
By the Cauchy-Schwarz inequality and Proposition \ref{p:approx} (1), we have
\begin{multline*}
\sum_{|n|\le\frac{t_{\phi}-\Delta}{2\pi y}}\frac{\left|\lambda_{\phi}(n)\right|^2}
{\left(t_{\phi}^2-(2\pi ny)^2\right)^{\frac{1}{2}}}
\sin^2\left(\frac{\pi}{4}+H(n,y)\right)\\
\le
\Biggl(\sum_{|n|\le\frac{t_{\phi}-\Delta}{2\pi y}}\frac{\left|\lambda_{\phi}(n)\right|^4}{t_{\phi}}\Biggr)^{\frac{1}{2}}
\Biggl(\sum_{|n|\le\frac{t_{\phi}-\Delta}{2\pi y}}\frac{1}{t_{\phi}-2\pi ny}\Biggr)^{\frac{1}{2}}\ll_{\delta}t_{\phi}^{\delta}
\end{multline*}
for a small $\delta>0$.
By this and Proposition \ref{p:approx} (2), we have
\begin{multline}\label{e:lowerbd}
\int_{\mathbf{v}_k}\left[\int_{-\frac{1}{2}}^{\frac{1}{2}}\psi_y(x)^2dx\right]^2dy \\
\gg\sum_{\frac{t_{\phi}}{4\pi Y10^5}\le m,n\le\frac{t_{\phi}}{4\pi Y}}
\frac{\left|\lambda_{\phi}(m)\right|^2\left|\lambda_{\phi}(n)\right|^2}{t_{\phi}^2}S(m,n)
+O\left(\left|\mathbf{v}_k\right|t_{\phi}^{2\theta-\frac{1}{3}+\delta}\log t_{\phi}\right)
\end{multline}
for $\mathbf{v}_k\in\mathcal{S}_M$, where $Y>2$ and
\[
S(m,n)=\int_{\mathbf{v}_k}
\sin^2\left(\frac{\pi}{4}+H(m,y)\right)\sin^2\left(\frac{\pi}{4}+H(n,y)\right)dy
\]
For $\frac{t_{\phi}}{4\pi Y10^5}\le m,n\le\frac{t_{\phi}}{4\pi Y}$, by applying integration by parts, we have
\[
\int_{\mathbf{v}_k}\sin\left(2H(n,y)\right)dy=O\left(\frac{1}{t_{\phi}}\right),
\]
\[
\int_{\mathbf{v}_k}\cos\left(2H(m,y)+2H(n,y)\right)dy=O\left(\frac{1}{t_{\phi}}\right),
\]
because by Lemma \ref{l:kbessel} (4), (5), we have
\begin{equation*}\begin{split}
\frac{\partial}{\partial y}\left(H(n,y)\right)=&-\frac{\sqrt{t_{\phi}^2-(2\pi ny)^2}}{y}\gg t_{\phi},\\
\frac{\partial^2}{\partial y^2}\left(H(n,y)\right)=&\frac{t_{\phi}^2}{y^2\sqrt{t_{\phi}^2-(2\pi ny)^2}}=O\left(t_{\phi}\right)
\end{split}
\end{equation*}
for $\frac{t_{\phi}}{4\pi Y10^5}\le n\le\frac{t_{\phi}}{4\pi Y}$.
By this and the fact that for real values $a,b$, we have
\begin{equation*}\begin{split}
4\sin^2\left(\frac{\pi}{4}+a\right)\sin^2\left(\frac{\pi}{4}+b\right)=&1+\sin\left(2a\right)+\sin\left(2b\right)+\sin\left(2a\right)\sin\left(2b\right)\\
\ge&\frac{1}{2}+\sin\left(2a\right)+\sin\left(2b\right)-\frac{\cos\left(2a+2b\right)}{2},
\end{split}\end{equation*}
we get
\[
S(m,n)\ge\frac{\left|\mathbf{v}_k\right|}{8}+O\left(\frac{1}{t_{\phi}}\right).
\]
By this, \eqref{e:lowerbd},
Proposition \ref{p:rhobounds} (1), \eqref{e:rho} and the fact from \eqref{e:ks} that $2\theta-\frac{1}{3}<0$,  for $\mathbf{v}_k\in\mathcal{S}_M$,  we have
\[
\int_{\mathbf{v}_k}\left[\int_{-\frac{1}{2}}^{\frac{1}{2}}\psi_y(x)^2dx\right]^2dy\gg \left|\mathbf{v}_k\right|
\]
for a sufficiently small $\delta>0$.
By this and \eqref{e:mless1}, we get
\begin{equation}\label{e:supper}
M\left|\mathcal{S}_M\right|\frac{1}{t_{\phi}^{1-\epsilon}}\ll t_{\phi}^{\epsilon_2+\epsilon_3}
\qquad\text{thus}\qquad
\left|\mathcal{S}_M\right|\ll\frac{t_{\phi}^{1-\epsilon+2\epsilon_2}}{M}.
\end{equation}
By the
definition of $\mathcal{S}_M$, there exists $Y_k\in \mathbf{v}_k$ such that
\[
t_{\phi}^{\epsilon_2}+\int_{-\frac{1}{2}}^{\frac{1}{2}}\psi_{Y_k}(x)^4dx\le M\left[\int_{-\frac{1}{2}}^{\frac{1}{2}}\psi_{Y_k}(x)^2dx\right]^2.
\]
By this and the H\"older inequality
\[
\int_{-\frac{1}{2}}^{\frac{1}{2}}\psi_{Y_k}(x)^2dx\le\left[\int_{-\frac{1}{2}}^{\frac{1}{2}}\left|\psi_{Y_k}(x)\right|dx\right]^{\frac{2}{3}}
\left[\int_{-\frac{1}{2}}^{\frac{1}{2}}\psi_{Y_k}(x)^4dx\right]^{\frac{1}{3}},
\]
we have

\begin{equation}\label{e:yklower}
\int_{-\frac{1}{2}}^{\frac{1}{2}}\psi_{Y_k}(x)^2dx\le M\left[\int_{-\frac{1}{2}}^{\frac{1}{2}}\left|\psi_{Y_k}(x)\right|dx\right]^2
\end{equation}
for $\mathbf{v}_k\notin\mathcal{S}_M$. Finally, due to \eqref{e:supper} and \eqref{e:yklower}, by choosing $M=t_{\phi}^{\epsilon_1}$, Lemma \ref{l:l12}] (1) follows.

(2) Similarly, we follow the proof of the previous statement (1).
Let
\[
M>0\qquad\text{and}\qquad h>0.
\]
$\mathcal{T}_M$ denotes the set of all $\mathbf{h}_k$'s such that
\[
t_{\phi}^{\epsilon_2}+\int_a^{a+h}\phi_x(y)^4dy+J(x,\delta)\ge M\left[\int_a^{a+h}\phi_x(y)^2dy\right]^2
\]
for all $x\in \mathbf{h}_k$,
where
\[
J(x,\delta)=J_1(x,\delta)+t_{\phi}^{\delta}J_2(x,\delta).
\]
By Theorem \ref{t:l4} and Lemma \ref{l:J},
\begin{equation*}
M\sum_{\mathbf{h}_k\in\mathcal{T}_M}\int_{\mathbf{h}_k}\left[\int_a^{a+h}\phi_x(y)^2dy\right]^2dx\le t_{\phi}^{\epsilon_2}+
\sum_{\mathbf{h}_k\in\mathcal{T}_M}\int_{\mathbf{h}_k}\left(\int_a^{a+h}\phi_x(y)^4dy+J(x,\delta)\right)dx\ll t_{\phi}^{\epsilon_2}.
\end{equation*}
Thus,
\begin{equation}\label{e:tmless}
M\sum_{\mathbf{h}_k\in\mathcal{T}_M}\int_{\mathbf{h}_k}\left[\int_a^{a+h}\phi_x(y)^2dy\right]^2dx\ll t_{\phi}^{\epsilon_2}.
\end{equation}

\begin{claim} For $\mathbf{h}_k\in\mathcal{T}_M$, we can choose $a>0$ and $h>0$ such that
\[
\int_{\mathbf{h}_k}\left[\int_a^{a+h}\phi(x+iy)^2dy\right]^2dx\gg\left|\mathbf{h}_k\right|=\frac{1}{t_{\phi}^{1-\epsilon}}.
\]
\end{claim}

\begin{proof}[Proof of Claim] This follows from the methods in \cite[pp. 1549--1558]{GRS}. For convenience, we briefly introduce the arguments.
For a large fixed $h>0$, let $a,b>0$ and $k(y)\in C_0^{\infty}(\mathbb{R})$ be an even function such that $(a-b,2a+b)\subset(Y,Y+h)$ and for $y\ge0$,
\begin{enumerate}
\item $k(y)=1$ for $a\le y\le2a$,
\item $k(y)=0$ for $y\in[0,a-b]\cup[2a+b,\infty)$,
\item $0\le k(y)\le1$ for $y\in(a-b,a)\cup(2a,2a+b)$,
\item $\left|k^{(l)}(y)\right|\le C_lb^{-l}$ for some constant $C_l>0$ with any $l\le0$.
\end{enumerate}

We use
\[
\phi(x+iy)=2\sqrt{y}\sum_{n=1}^{\infty}\rho_{\phi}(n)\cos(2\pi nx)\mathrm{K}_{it_{\phi}}(2\pi ny).
\]
Set
\[
N=\frac{t_{\phi}}{\nu}\qquad(\nu>0).
\]
Define
\[
F(y)=2\sqrt{y}\sum_{n\le N}\widetilde{\rho}_{\phi}(n)\cos(2\pi nx)e^{\frac{\pi}{w}t_{\phi}}\mathrm{K}_{it_{\phi}}(2\pi ny)
\]
We have
\[
I:=\int_Y^{Y+h}\phi(x+yi)^2k(y)\frac{dy}{y}\ge 2I_1-I_2,
\]
where
\[
I_1=2\int F(y)\phi(x+iy)k(y)\frac{dy}{y}\qquad{and}\qquad I_2=\int F(y)^2k(y)\frac{dy}{y}.
\]
We write
\[
I_j=4\sum_{m\ge N}\sum_{n\ge1}\widetilde{\rho}_{\phi}(m)\widetilde{\rho}_{\phi}(n)\cos(2\pi mx)\cos(2\pi nx)\alpha_j(n)G(m,n),
\]
where
\[
G(m,n)=e^{\pi t_{\phi}}\int\mathrm{K}_{it_{\phi}}(2\pi my)\mathrm{K}_{it_{\phi}}(2\pi ny)k(y)
\]
and with $\alpha_1(n)=1$ while $\alpha_2(n)$ does the same except it vanishes for $n<N$. From Proposition 6.4 in \cite[p., 1551]{GRS} and Proposition \ref{p:rhobounds}, we have
\[
\mathop{\sum_{n\ge N}\sum_{n\ge1}}_{|m-n|\ge1}\left|\widetilde{\rho}_{\phi}(m)\widetilde{\rho}_{\phi}(n)G(m,n)\right|\ll_l\frac{\sqrt{\nu}}{b^l}+\frac{a^{-\frac{1}{2}}}{b}+\frac{a\sqrt{\nu}}{b^3}+o(1).
\]
Thus,
\begin{multline*}
\int_{\mathbf{h}_k}Idx\ge\int_{\mathbf{h}_k}4\sum_{n\ge N}\rho_{\phi}(n)^2\cos^2(2\pi mx)\int\mathrm{K}_{it_{\phi}}(2\pi ny)^2k(y)dydx\\
+O\left(\left|\mathbf{h}_k\right|\left(\frac{\sqrt{\nu}}{b^l}+\frac{a^{-\frac{1}{2}}}{b}+\frac{a\sqrt{\nu}}{b^3}\right)\right)+o(\left|\mathbf{h}_k\right|).
\end{multline*}
By this and the fact that for $n\ge N$,
\[
\int_{\mathbf{h}_k}\cos^2(2\pi nx)dx=\frac{1}{2}\int_{\mathbf{h}_k}1+\cos(4\pi nx)dx=\frac{1}{2}\left(\left|\mathbf{h}_k\right|+O\left(\frac{\nu}{t_{\phi}}\right)\right),
\]
we have
\begin{multline*}
\int_{\mathbf{h}_k}Idx\ge\left|\mathbf{h}_k\right|\sum_{n\ge N}\rho_{\phi}(n)^2\int\mathrm{K}_{it_{\phi}}(2\pi ny)^2k(y)dy\\
+O\left(\left|\mathbf{h}_k\right|\left(\frac{\sqrt{\nu}}{b^l}+\frac{a^{-\frac{1}{2}}}{b}+\frac{a\sqrt{\nu}}{b^3}\right)\right)+o(\left|\mathbf{h}_k\right|).
\end{multline*}
By this and the fact from \cite[p. 1557]{GRS} that
\[
\sum_{n\ge N}\rho_{\phi}(n)^2\int\mathrm{K}_{it_{\phi}}(2\pi ny)^2k(y)dy\ge2\log2+O\left(a\nu^{-1}\right),
\]
we get
\[
\int_{\mathbf{h}_k}Idx\gg\left|\mathbf{h}_k\right|
\]
by choosing
\[
b=\sqrt{\nu},\,\,\,l=100,\,\,\,\nu=a^{\frac{101}{100}}
\]
with a sufficiently large $a$. Thus, for $h=b-a$, we have
\[
\left|\mathbf{h}_k\right|\ll\int_{\mathbf{h}_k}\int_a^{a+h}\phi(x+yi)^2dydx.
\]
By the Cauchy-Schwarz inequality, we get
\[
\left|\mathbf{h}_k\right|\ll\int_{\mathbf{h}_k}\int_a^{a+h}\phi(x+yi)^2dydx\le\left(\int_{\mathbf{h}_k}\left[\int_a^{a+h}\phi(x+yi)^2dy\right]^2dx\right)^{\frac{1}{2}}
\left(\int_{\mathbf{h}_k}1dx\right)^{\frac{1}{2}}
\]
or
\[
\left|\mathbf{h}_k\right|\ll\int_{\mathbf{h}_k}\left[\int_a^{a+h}\phi(x+yi)^2dy\right]^2dx
\]
This proves Claim.
\end{proof}

By Claim, \eqref{e:tmless}, \eqref{e:rho} and \eqref{e:lambda} , we have
\[
Mt_{\phi}^{-\epsilon_2}\left|\mathcal{T}_M\right|\cdot\frac{1}{t_{\phi}^{1-\epsilon}}\le t_{\phi}^{\epsilon_2}\qquad\text{or}\qquad\left|\mathcal{T}_M\right|\ll \frac{t_{\phi}^{1-\epsilon+2\epsilon_2}}{M}.
\]
We choose $M=t_{\phi}^{\epsilon_1}$.
Then, for $\mathbf{h}_k\not\in \mathcal{T}_M$, there exists $X_k\in\mathbf{h}_k$ such that
\[
t_{\phi}^{\epsilon_2}+\int_a^{a+h}\phi_x(y)^4dy+J(x,\delta)\le t_{\phi}^{\epsilon_1}\left[\int_a^{a+h}\phi_x(y)^2dy\right]^2
\]
Using this, as in the proof of (1), we similarly obtain the first statement of Lemma \ref{l:l12} (2). For the second statement, we use
\[
J(x,\delta)\le t_{\phi}^{\epsilon_1}\left[\int_a^{a+h}\phi_x(y)^2dy\right]^2.
\]
By this and the fact that due to Theorem \ref{t:GRS},
\[
\left[\int_a^{a+h}\phi_x(y)^2dy\right]^2\ll t_{\phi}^{\epsilon_3},
\]
the second statement follows.

W have completed the proof of Lemma \ref{l:l12}.
\end{proof}

We state the following main lemma.

\begin{lemma}\label{l:main} Let $a>0$. Let $0<\epsilon<1/2$ and $0<\epsilon_1<\epsilon/10^7$.

\noindent $(1)$
We set
\[
\mathcal{A}=\,\,\,\text{the set of all $Y_k$'s that satisfy Lemma \ref{l:l12} (1) for $a$ and $\epsilon$.}
\]
Then, for $\beta=\{x+Y_ki:-\frac{1}{2}<x\le\frac{1}{2}\}$ $\left(Y_k\in\mathcal{A}\right)$ , we have
\[
\textrm{K}^{\beta}(\phi)\gg_{\epsilon}t_{\phi}^{1-5\epsilon_1}.
\]
\noindent $(2)$ We set
\[
\mathcal{B}=\,\,\,\text{the set of all $X_k$'s that satisfy Lemma \ref{l:l12} (2) for $a$, $\epsilon$ and $h$.}
\]
Then,
for $\beta=\{X_k+yi:a\le y\le a+h\}$ $\left(X_k\in\mathcal{B}\right)$ , we have
\[
\textrm{K}^{\beta}(\phi)\gg_{\epsilon}t_{\phi}^{1-9\epsilon_1}.
\]
\end{lemma}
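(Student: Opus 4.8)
The plan is to count the sign changes of $\phi$ along each segment by applying the modified Littlewood theorem (Theorem~\ref{t:lit}) to the restriction of $\phi$. Since $\phi(x+iy)=\rho_\phi(1)e^{-\frac{\pi}{2}t_\phi}\sqrt{y}\,\psi_y(x)$ and the prefactor keeps a constant sign along a horizontal segment $y=Y_k$ (and also along a vertical segment $x=X_k$, because $\sqrt{y}>0$), the number of sign changes of $\phi$ coincides with that of $\psi_{Y_k}(x)$ on $[-\tfrac12,\tfrac12]$ in case $(1)$ and with that of $\varphi_{X_k}(y)$ on $[a,a+h]$ in case $(2)$. So for $(1)$ I would take $f(x)=\psi_{Y_k}(x)$, $g=0$, $[a,b]=[-\tfrac12,\tfrac12]$, and for $(2)$ take $f(y)=\varphi_{X_k}(y)$, $g=0$, $[a,b]=[a,a+h]$; in both cases $f_1=f$, and since the lower bound of Theorem~\ref{t:lit} is produced by counting disjoint subintervals on which $f_1$ genuinely changes sign, $N(f_1)$ is a lower bound for $\textrm{K}^{\beta}(\phi)$ once the hypotheses are verified.

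Next I would check the two standing hypotheses of Theorem~\ref{t:lit} from Lemma~\ref{l:l12}. The comparison $M_2(f)^2\ll t_\phi^{\epsilon_1}M_1(f)^2$ (using $b-a=1$, resp.\ the fixed constant $h$) gives $M_1(f)\ge cM_2(f)$ with $c=t_\phi^{-\epsilon_1/2}$, hence $c^2=t_\phi^{-\epsilon_1}$, while the matching lower bound in Lemma~\ref{l:l12} yields $M_2(f)\gg t_\phi^{-(\epsilon_1-\epsilon_2)/4}$. With $\omega$ a fixed constant I would then choose $N\asymp t_\phi^{1-4\epsilon_1}$ in case $(1)$ and $N\asymp t_\phi^{1-8\epsilon_1}$ in case $(2)$, set $\eta=\omega(b-a)/N$, and note $N>10^7(\omega+7)$ for large $t_\phi$; the conclusion $N(f_1)\ge\frac{c^2}{10(\omega+2)}N$ then delivers $\gg_{\epsilon}t_\phi^{1-5\epsilon_1}$, resp.\ $\gg_{\epsilon}t_\phi^{1-9\epsilon_1}$, provided the remaining hypothesis $J(f_1,\eta)<\frac{1}{16}c^3\eta M_2(f)$ holds for these choices.

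The crux is therefore the verification of $J(f,\eta)<\frac{1}{16}c^3\eta M_2(f)$, since this is what decides how large $N\sim1/\eta$ may be. In case $(1)$ I would feed $\eta=\omega t_\phi^{-1+4\epsilon_1}$ into Lemma~\ref{l:Jy}, obtaining $J(\psi_{Y_k},\eta)\ll t_\phi^{\delta_1}\big(t_\phi^{-1+2\epsilon_1}+t_\phi^{-1}\big)$, to be compared with the right-hand side $\gg t_\phi^{-1+2.25\epsilon_1+\epsilon_2/4}$; the inequality holds as soon as $\delta_1$ is taken small (say $\delta_1<\epsilon_1/4$). In case $(2)$ I would use Lemma~\ref{l:J} together with the bounds $J_1(X_k,\delta)\le t_\phi^{\epsilon_1+\epsilon_3}$ and $J_2(X_k,\delta)\le t_\phi^{\epsilon_1+\epsilon_3-\delta}$ guaranteed for $X_k\in\mathcal{B}$, so that
\[
J(\varphi_{X_k},\eta)^2\ll t_\phi^{-2}+t_\phi^{-2+2\delta+\epsilon_1+\epsilon_3}+\eta^2t_\phi^{\epsilon_1+\epsilon_3-\delta},
\]
which I would compare with $\big(\tfrac{1}{16}c^3\eta M_2\big)^2\gg t_\phi^{-2+12.5\epsilon_1+\epsilon_2/2}$ for $\eta=\omega h\,t_\phi^{-1+8\epsilon_1}$. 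The decisive, $\eta$-independent term here is $t_\phi^{-2+2\delta+\epsilon_1+\epsilon_3}$, and balancing the three contributions forces the cut-off $\delta$ to sit in the narrow window $4.5\epsilon_1<\delta<5.75\epsilon_1$ (e.g.\ $\delta=5\epsilon_1$); this is precisely what keeps the floor on $\eta$ as low as $t_\phi^{-1+O(\epsilon_1)}$ and hence permits $N\asymp t_\phi^{1-8\epsilon_1}$.

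The hard part will be exactly this control of $J(f,\eta)$, i.e.\ exhibiting genuine cancellation in the averaged integral $\int_0^{\eta}f(\cdot+v)\,dv$, which is what certifies that $f$ oscillates in sign rather than merely vanishing. The substance lives in Lemmas~\ref{l:J}--\ref{l:Jy}, whose proofs route $\widetilde{\varphi}_x$ through its Mellin transform $L_x(s)\gamma_\phi(s)$ and rest on the second-moment identity $\int_{-1/2}^{1/2}\lvert L_x(\tfrac12+it)\rvert^2\,dx=\sum_{n<t_\phi}\lambda_\phi(n)^2/n$ from Proposition~\ref{p:approx}, with the MacDonald–Bessel asymptotics of Lemma~\ref{l:kbessel} isolating the frequencies $n$ away from the transition range $2\pi ny\approx t_\phi$. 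The genuinely delicate contribution is that transition range, encoded in $J_1(X_k,\delta)$; it is the reason the vertical estimate loses $9\epsilon_1$ against the $5\epsilon_1$ of the horizontal one, and the reason one must localise to the good set $\mathcal{B}$ where $J_1(X_k,\delta)$ is small. By contrast the $L^4$-bound of Theorem~\ref{t:l4} enters only indirectly, through Lemma~\ref{l:l12}, to secure the $L^1$--$L^2$ comparison $M_1(f)\ge cM_2(f)$, without which Littlewood's criterion would be vacuous.
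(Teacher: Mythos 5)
Your proposal is correct and follows essentially the same route as the paper: apply Theorem \ref{t:lit} with $g=0$ and $c\asymp t_{\phi}^{-\epsilon_1/2}$ from Lemma \ref{l:l12}, verify the $J$-hypothesis via Lemma \ref{l:Jy} in case $(1)$ and via Lemma \ref{l:J} with the good-set bounds on $J_1,J_2$ and the choice $\delta=5\epsilon_1$ in case $(2)$, with $\eta\asymp t_{\phi}^{-1+4\epsilon_1}$ resp.\ $t_{\phi}^{-1+8\epsilon_1}$. The only (immaterial) difference is normalization: the paper takes $N=t_{\phi}$ with $\omega=t_{\phi}^{4\epsilon_1}$ (resp.\ $t_{\phi}^{8\epsilon_1}$) rather than your fixed $\omega$ with $N\asymp t_{\phi}^{1-4\epsilon_1}$ (resp.\ $t_{\phi}^{1-8\epsilon_1}$), which yields the same $\eta$ and the same final exponents.
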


For the proof of this main lemma, we adopt the methods in \cite{Lit}.

\begin{proof}[Proof of Lemma \ref{l:main}] (1) Let's apply Theorem \ref{t:lit}. Define
\[
M_1\left(\psi_{Y_k}\right)=\int_{-1/2}^{1/2}\left|\psi_{Y_k}(x)\right|dx,\qquad M_2\left(\psi_{Y_k}\right)=\Biggl(\int_{-1/2}^{1/2}\left|\psi_{Y_k}(x)\right|^2dx\Biggr)^{1/2}.
\]
By Lemma \ref{l:l12} (1) and Lemma \ref{l:Jy}, for $0<\epsilon_2<\epsilon_1/10^7$ and $0<\delta_1<\epsilon_2/10^7$, we have
\[
M_1\left(\psi_{Y_k}\right)\gg t_{\phi}^{-\epsilon_1/2}M_2\left(\psi_{Y_k}\right),\,\,\,M_2\left(\psi_{Y_k}\right)\gg t_{\phi}^{-\frac{\epsilon_1-\epsilon_2}{4}},\,\,\, J\left(\psi_{Y_k},\eta\right)\ll\frac{t_{\phi}^{\delta_1}\sqrt{\eta}}{\sqrt{t_{\phi}}}+\frac{t_{\phi}^{\delta_1}}{t_{\phi}}
\]
for $\eta=t_{\phi}^{4\epsilon_1}/t_{\phi}$.
Set
\[
N=t_{\phi},\qquad\omega=t_{\phi}^{4\epsilon_1}.
\]
Then, for $\eta=t_{\phi}^{4\epsilon_1}/t_{\phi}$, we have
\[
J\left(\psi_{Y_k},\eta\right)=o\left(\left(t_{\phi}^{-\frac{\epsilon_1}{2}}\right)^3t_{\phi}^{-\frac{\epsilon_1-\epsilon_2}{4}}\eta\right)=
o\left(\left(t_{\phi}^{-\frac{\epsilon_1}{2}}\right)^3M_2\left(\psi_{Y_k}\right)\eta\right).
\]
Thus, by Theorem \ref{t:lit} with $c\asymp t_{\phi}^{-\frac{\epsilon_1}{2}}$, we have
\[
K^{\beta}(\phi)\ge\frac{c^2}{10(\omega+2)}N\gg\left(t_{\phi}^{-\frac{\epsilon_1}{2}}\right)^2\cdot\frac{t_{\phi}}{t_{\phi}^{4\epsilon_1}}=t_{\phi}^{1-5\epsilon_1},
\]
where $\beta=\{x+iY_k:-1/2\le x<1/2\}$.

(2) Similarly, we apply Theorem \ref{t:lit}. Define
\[
M_1\left(\varphi_{X_k}\right)=\int_a^{a+h}\left|\psi_{X_k}(y)\right|dy,\qquad M_2\left(\varphi_{X_k}\right)=\Biggl(\int_a^{a+h}\left|\varphi_{X_k}(y)\right|^2dy\Biggr)^{1/2}.
\]
By Lemma \ref{l:l12} (2) and Lemma \ref{l:J},
\[
M_1\left(\varphi_{X_k}\right)\gg t_{\phi}^{-\frac{\epsilon_1}{2}}M_2\left(\varphi_{X_k}\right),\qquad M_2\left(\varphi_{X_k}\right)\gg t_{\phi}^{-\frac{\epsilon_1-\epsilon_2}{4}}
\]
\[
J\left(\varphi_{X_k},\eta\right)\ll\frac{1}{t_{\phi}}+\frac{\sqrt{J_1\left(X_k,\delta\right)}}{t_{\phi}^{1-\delta}}+\eta\sqrt{J_2\left(X_k,\delta\right)}\ll
\frac{t_{\phi}^{(\epsilon_1+\epsilon_3)/2}}{t_{\phi}^{1-\delta}}+\eta t_{\phi}^{-\frac{\delta}{2}+\frac{\epsilon_1+\epsilon_3}{2}}
\]
for $\eta=t_{\phi}^{8\epsilon_1}h/t_{\phi}$.
Set
\[
N=t_{\phi},\qquad\omega=t_{\phi}^{8\epsilon_1}.
\]
Then, by choosing $\delta=5\epsilon_1$ with $\eta=t_{\phi}^{8\epsilon_1}h/t_{\phi}$, we get
\[
J\left(\varphi_{X_k},\eta\right)=o\left(\left(t_{\phi}^{-\frac{\epsilon_1}{2}}\right)^3M_2\left(\varphi_{X_k}\right)\eta\right).
\]
Thus, by Theorem \ref{t:lit} with $c\asymp t_{\phi}^{-\frac{\epsilon_1}{2}}$, we have
\[
K^{\beta}(\phi)\ge\frac{c^2}{10(\omega+2)}N\gg\left(t_{\phi}^{-\frac{\epsilon_1}{2}}\right)^2\cdot\frac{t_{\phi}}{t_{\phi}^{8\epsilon_1}}=t_{\phi}^{1-9\epsilon_1},
\]
where $\beta=\{X_k+iy:a\le y<a+h\}$.

We have completed the proof of Lemma \ref{l:main}

\end{proof}

Theorem \ref{t:main2} follows from Lemma \ref{l:main} and Lemma \ref{l:l12}.

\section{Proof of Theorem \ref{t:sc}}

By Theorem \ref{t:GRS} with \eqref{e:rho}, we have
$\beta=\{yi:a<y<a+h\}$ ($h>0$) satisfying
\begin{equation}\label{e:l2lowb}
\int_{\beta}\left|\varphi_0(y)\right|^2dy\gg_{\delta}t_{\phi}^{-\delta}
\end{equation}
for any $\delta>0$.
By the Lindel\"of hypothesis for $L(s,\phi)$, we have
\[
L_0\left(\frac{1}{2}+it\right)\ll_{\epsilon}t_{\phi}^{\epsilon}
\]
for $|t|\le t_{\phi}^{O(1)}$ and for any $\epsilon>0$.  From this, we obtain
\[
J_1(0,\delta)+t_{\phi}^{\delta}J_2(0,\delta)\ll t_{\phi}^{\delta_1}
\]
for $0<\delta_1<\delta/10^7$ ($0<\delta<1/100$).
By this and Lemma \ref{l:J}, we have
\[
J\left(\varphi_0,\eta\right)\ll\frac{t_{\phi}^{\delta_1/2}}{t_{\phi}^{1-\delta}}+\eta t_{\phi}^{-\frac{\delta}{2}+\frac{\delta_1}{2}}
\]
for $\eta=t_{\phi}^{11\delta/7}h/t_{\phi}$.
Applying the assumption
\[
\int_{\beta}\phi(iy)^4dy\ll_{\epsilon}t_{\phi}^{\epsilon}
\]
for any $\epsilon>0$, by \eqref{e:l2lowb} and the H\"older inequality
\[
\int_{\beta}|\varphi_0|^2dy\le\left(\int_{\beta}|\varphi_0|dy\right)^{\frac{2}{3}}\left(\int_{\beta}|\varphi_0|^4dy\right)^{\frac{1}{3}},
\]
we have
\[
t_{\phi}^{-\delta_1}M_2\left(\varphi_0\right)\ll_{\delta_1} M_1\left(\varphi_0\right).
\]
Also, we have
\[
M_2\left(\varphi_0\right)\gg_{\delta_1}t_{\phi}^{-\delta_1}.
\]
Let $\epsilon_1>0$.
Set
\[
N=t_{\phi},\qquad\omega=t_{\phi}^{11\epsilon_1}.
\]
Thus, by choosing $\delta=8\epsilon_1$ with $\eta=t_{\phi}^{11\epsilon_1}h/t_{\phi}$
\[
J\left(\varphi_0,\eta\right)=o\left(t_{\phi}^{-3\delta_1}M_2\left(\varphi_0\right)\eta\right).
\]
Thus, by Theorem \ref{t:lit} with $c\asymp t_{\phi}^{-\delta_1}$ and $b=a+h$, we have
\[
K^{\beta}(\phi)\gg_{\epsilon_1}t_{\phi}^{1-12\epsilon_1}.
\]
Thus, Theorem \ref{t:sc} follows.

\section{Acknowledgment} I am deeply grateful to the anonymous referees for their many valuable comments and suggestions. I would also like to sincerely thank Professor Peter Sarnak for his encouragement regarding this paper and for generously sharing his insights during my visit in February 2024. I am also thankful to Dr. Donghoon Park for initiating and supporting this project.

\end{document}